\documentclass{amsart}
\oddsidemargin 0mm
\evensidemargin 0mm
\topmargin 0mm
\textwidth 160mm
\textheight 230mm
\tolerance=9999
\usepackage{amssymb,amstext,amsmath,amscd,amsthm,amsfonts,enumerate,graphicx,latexsym,stmaryrd}
\usepackage[usenames]{color}
\usepackage[all]{xy}
\newtheorem{thm}{Theorem}[section]
\newtheorem{lem}[thm]{Lemma}
\newtheorem{prop}[thm]{Proposition}
\newtheorem{cor}[thm]{Corollary}
\theoremstyle{definition}
\newtheorem{dfn}[thm]{Definition}

\newtheorem{setup}[thm]{Setup}
\newtheorem{rem}[thm]{Remark}

\newtheorem{ex}[thm]{Example}
\theoremstyle{remark}
\newtheorem*{ac}{Acknowlegments}
\newtheorem*{conv}{Convention}

\numberwithin{equation}{thm}
\def\A{\mathrm{A}}
\def\ann{\operatorname{Ann}}
\def\ass{\operatorname{Ass}}
\def\b{\operatorname{B}}

\def\c{\mathsf{C}}
\def\ccm{\operatorname{C_{CM}}}
\def\cd{\operatorname{cd}}
\def\ch{\operatorname{char}}
\def\Cl{\operatorname{Cl}}
\def\cl{\operatorname{cl}}
\def\cm{\operatorname{MCM}}

\def\codim{\operatorname{codim}}
\def\cx{\operatorname{cx}}
\def\D{\mathrm{D}}
\def\depth{\operatorname{depth}}
\def\e{\operatorname{e}}
\def\edim{\operatorname{edim}}
\def\Ext{\operatorname{Ext}}
\def\g{\operatorname{G_0}}
\def\ge{\geqslant}
\def\grade{\operatorname{grade}}
\def\H{\operatorname{H}}
\def\h{\mathsf{H}}
\def\height{\operatorname{ht}}
\def\Hom{\operatorname{Hom}}
\def\I{\operatorname{I}}
\def\im{\operatorname{Im}}
\def\int{\operatorname{Int}}
\def\jac{\operatorname{jac}}
\def\k{\mathsf{K}}
\def\ker{\operatorname{Ker}}
\def\le{\leqslant}
\def\lhom{\operatorname{\underline{Hom}}}
\def\m{\mathfrak{m}}
\def\mod{\operatorname{mod}}

\def\NN{\mathbb{N}}
\def\nn{\mathfrak{n}}
\def\p{\mathfrak{p}}
\def\pd{\operatorname{pd}}
\def\Proj{\operatorname{Proj}}
\def\Q{\mathbb{Q}}
\def\q{\mathfrak{q}}
\def\R{\mathbb{R}}
\def\rk{\operatorname{rk}}
\def\rmm{\mathbb{R}_{\le0}}
\def\RP{\mathbb{R}_{>0}}
\def\rp{\mathbb{R}_{\ge0}}

\def\sing{\operatorname{Sing}}
\def\spec{\operatorname{Spec}}
\def\supp{\operatorname{Supp}}
\def\syz{\mathrm{\Omega}}
\def\Tor{\operatorname{Tor}}
\def\v{\mathrm{V}}
\def\vv{\upsilon}
\def\Z{\mathbb{Z}}
\def\ZP{\mathbb{Z}_{>0}}
\def\zp{\mathbb{Z}_{\ge0}}
\begin{document}
\allowdisplaybreaks
\title{Grothendieck groups, convex cones and maximal Cohen--Macaulay points}
\author{Ryo Takahashi}
\address{Graduate School of Mathematics, Nagoya University, Furocho, Chikusaku, Nagoya 464-8602, Japan}
\email{takahashi@math.nagoya-u.ac.jp}
\urladdr{https://www.math.nagoya-u.ac.jp/~takahashi/}
\thanks{2020 {\em Mathematics Subject Classification.} 13C14, 13C20, 13D15, 14C17}
\thanks{{\em Key words and phrases.} asymptotic depth, Cohen--Macaulay cone, cohomological dimension, complexity, convex cone, divisor class group, finite/countable Cohen--Macaulay representation type, Grothendieck group, intersection multiplicity, maximal Cohen--Macaulay module/point, numerical equivalence, polyhedral, strongly convex}
\thanks{The author was partly supported by JSPS Grant-in-Aid for Scientific Research 19K03443 and JSPS Fund for the Promotion of Joint International Research 16KK0099}
\dedicatory{Dedicated to Professor Yuji Yoshino on the occasion of his retirement}
\begin{abstract}
Let $R$ be a commutative noetherian ring.
Let $\h(R)$ be the quotient of the Grothendieck group of finitely generated $R$-modules by the subgroup generated by pseudo-zero modules.
Suppose that the $\R$-vector space $\h(R)_\R=\h(R)\otimes_\Z\R$ has finite dimension.
Let $\c(R)$ (resp. $\c_r(R)$) be the convex cone in $\h(R)_\R$ spanned by maximal Cohen--Macaulay $R$-modules (resp. maximal Cohen--Macaulay $R$-modules of rank $r$).
We explore the interior, closure and boundary, and convex polyhedral subcones of $\c(R)$.
We provide various equivalent conditions for $R$ to have only finitely many rank $r$ maximal Cohen--Macaulay points in $\c_r(R)$ in terms of topological properties of $\c_r(R)$.
Finally, we consider maximal Cohen--Macaulay modules of rank one as elements of the divisor class group $\Cl(R)$.
\end{abstract}
\maketitle
\tableofcontents
\section{Introduction}

Let $R$ be a commutative noetherian ring.
Let $\g(R)$ be the Grothendieck group of finitely generated $R$-modules.
Kurano \cite{K2} defines the {\em Grothendieck group modulo numerical equivalence} $\overline{\g(R)}$ and develops its theory.
Chan and Kurano \cite{CK} introduce and explore the {\em Cohen--Macaulay cone} $\ccm(R)$, which is by definition the convex cone in $\overline{\g(R)}_\R=\overline{\g(R)}\otimes_\Z\R$ spanned by maximal Cohen--Macaulay $R$-modules.

Let $\h(R)$ be the quotient of $\g(R)$ by the subgroup generated by pseudo-zero modules.
Let $\c(R)$ be the convex cone in $\h(R)_\R$ spanned by maximal Cohen--Macaulay modules.
When $R$ is a Cohen--Macaulay local ring with $\dim R\le3$, the canonical map $\g(R)\to\overline{\g(R)}$ factors through $\h(R)$.
Hence the studies of $\h(R),\c(R)$ should contribute to those of $\overline{\g(R)},\ccm(R)$.
We thus explore $\h(R),\c(R)$ in this paper.

Suppose that $R$ is an integral domain.
Then taking the rank of each $R$-module induces the rank function $\rk:\h(R)\to\Z$, and it holds that $\h(R)=\Z[R]+\k(R)\cong\Z\oplus\k(R)$, where $\k(R)=\ker(\rk)$; note that $\k(R)$ is isomorphic to the divisor class group $\Cl(R)$ in the case where $R$ is normal.
Suppose further that $\dim_\R\h(R)_\R<\infty$, or equivalently, that $\dim_\R\k(R)_\R<\infty$.
Then $\h(R)_\R=\R[R]+\k(R)_\R$ can be regarded as an Euclidean space, where $[R]$ and a basis of $\k(R)_\R$ form a normal orthogonal basis.

First of all, we investigate the interior, closure and boundary of $\c(R)$.

\begin{thm}[Propositions \ref{48}, \ref{35} and \ref{44}]\label{b}
Let $R$ be a Cohen--Macaulay normal local ring with a canonical module $\omega$.
Suppose that $\h(R)_\R$ is a finite-dimensional $\R$-vector space.
Then the following hold.
\begin{enumerate}[\rm(1)]
\item
The points $[R]$ and $[\omega]$ belong to the interior of $\c(R)$.
\item
The closure of $\c(R)$ is a strongly convex cone if and only if it meets $\k(R)_\R$ only at the origin.
\item
Suppose that $R$ is Gorenstein.
The convex cone $\c(R)$ and its closure are symmetric with respect to the axis $\R[R]$.
Let $M$ be a maximal Cohen--Macaulay $R$-module.
If $[M]$ or $[M^\ast]$ belongs to the boundary of $\c(R)$, then the ranks of the syzygies and cosyzygies of $M$ are more than or equal to the rank of $M$.
\end{enumerate}
\end{thm}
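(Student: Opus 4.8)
I focus on part (3), the Gorenstein statement (parts (1) and (2) being the referenced Propositions \ref{48} and \ref{35}); from part (1) I use that $[R]$ lies in the interior of $\c(R)$. First I would establish the symmetry and read off that the hypotheses on $[M]$ and $[M^\ast]$ coincide. Since $R$ is Gorenstein, $(-)^\ast=\Hom_R(-,R)$ is a duality on maximal Cohen--Macaulay modules: $M^\ast$ is again maximal Cohen--Macaulay and $M^{\ast\ast}\cong M$. Writing $[M]=\rk(M)[R]+c_1(M)$ with $c_1(M)\in\k(R)_\R$ under $\k(R)\cong\Cl(R)$, one has $\rk(M^\ast)=\rk(M)$ while $c_1(M^\ast)=-c_1(M)$, because $\det(M^\ast)\cong(\det M)^{-1}$ in $\Cl(R)$. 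Hence $[M^\ast]=2\rk(M)[R]-[M]$, so $[M]\mapsto[M^\ast]$ is the orthogonal reflection $\sigma$ fixing $\R[R]$ and negating $\k(R)_\R$. As $\c(R)$ is the convex cone generated by the classes of maximal Cohen--Macaulay modules and $\sigma$ permutes these generators, $\sigma(\c(R))=\c(R)$; since $\sigma$ is a homeomorphism, $\overline{\c(R)}$ is symmetric too. In particular $[M]$ is a boundary point iff $[M^\ast]=\sigma[M]$ is.

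Next I would reduce the two rank inequalities to a single statement. From $0\to\syz M\to R^{\mu(M)}\to M\to0$ we get $[\syz M]=\mu(M)[R]-[M]$ and $\rk\syz M=\mu(M)-\rk M$, and $\syz M$ is maximal Cohen--Macaulay over the Gorenstein ring $R$, so $[\syz M]\in\c(R)$; dually $\syz^{-1}M=(\syz M^\ast)^\ast$ gives $\rk\syz^{-1}M=\mu(M^\ast)-\rk M=\rk\syz(M^\ast)$. Thus $\rk\syz M\ge\rk M$ is equivalent to $\mu(M)\ge2\rk M$, and the cosyzygy inequality for $M$ is exactly the syzygy inequality for $M^\ast$. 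Because $[M^\ast]$ is on the boundary precisely when $[M]$ is, it suffices to prove: if $[M]$ lies on the boundary of $\c(R)$, then $\mu(M)\ge2\rk M$.

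For this I would prove the contrapositive: $\mu(M)<2\rk M$ forces $[M]$ into the interior. As $\c(R)$ has nonempty interior (it contains $[R]$), it is enough to show $\ell([M])>0$ for every nonzero linear $\ell$ with $\ell\ge0$ on $\c(R)$. Fix such an $\ell$ and put $a=\ell([R])$, so $a>0$ since $[R]$ is interior. Assuming $\ell([M])=0$ gives $\ell(c_1(M))=-a\rk M$. Now $\sigma[\syz M]=[(\syz M)^\ast]\in\c(R)$ equals $(\mu(M)-\rk M)[R]+c_1(M)$, whence
\[
\ell\big(\sigma[\syz M]\big)=a\big(\mu(M)-\rk M\big)+\ell(c_1(M))=a\big(\mu(M)-2\rk M\big).
\]
Nonnegativity of $\ell$ on $\c(R)$ together with $a>0$ forces $\mu(M)\ge2\rk M$, contradicting $\mu(M)<2\rk M$. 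Hence $\ell([M])>0$ for all such $\ell$, so $[M]$ is interior. Applying this to $M$ and to $M^\ast$ yields both rank inequalities.

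The main obstacle I expect is the reflection identity $[M^\ast]=2\rk(M)[R]-[M]$ in $\h(R)$: it is the single place where Gorensteinness (so that $\omega=R$ and $(-)^\ast$ is a duality on maximal Cohen--Macaulay modules) is essential, and it is what couples the dual to the geometry. Making it precise amounts to checking that $(-)^\ast$ acts as $-1$ on $\k(R)\cong\Cl(R)$, i.e. that the first Chern class satisfies $c_1(M^\ast)=-c_1(M)$; the cleanest route is via determinants of reflexive modules, using $\det(M^\ast)\cong(\det M)^{-1}$. Once this reflection identity is in hand, the geometric half is the short functional computation above, whose only further inputs are that $[R]$ is interior and that $\syz M$ and $(\syz M)^\ast$ contribute classes to $\c(R)$.
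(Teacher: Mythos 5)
Your symmetry argument, the reflection identity $[M^\ast]=2\rk(M)[R]-[M]$, and the supporting-functional computation are all correct, and they run parallel to the paper's own tools (the involution $\vv_\omega$ of Proposition \ref{48} and the interiority criteria of Proposition \ref{35}). The gap is in your reduction step. The statement being proved (Proposition \ref{44}) asserts $\rk_R(\syz^iM)\ge\rk_RM$ for \emph{all} $i\in\Z$, i.e.\ for every syzygy and cosyzygy, not just the first ones. Your claim that ``it suffices to prove: if $[M]$ lies on the boundary then $\mu(M)\ge2\rk M$'' only yields the cases $i=\pm1$: knowing $\rk\syz M\ge\rk M$ does not allow iteration, because the boundary hypothesis concerns $[M]$ and $[M^\ast]$ and does not pass to $\syz M$. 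There is no reason why $[\syz M]$ or $[(\syz M)^\ast]$ should again lie on $\partial\c(R)$ --- indeed, when $\mu(M)>2\rk M$ your own supporting functional $\ell$ is strictly positive at $[(\syz M)^\ast]$ --- so your argument says nothing about $\syz^2M,\syz^3M,\dots$ or the higher cosyzygies.

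The fix stays entirely within your framework: skip the reduction and apply $\ell$ to telescoped classes, which is in essence what the paper does with Proposition \ref{35} in place of supporting functionals. Writing $r=\rk M$ and telescoping the minimal free resolution gives $[\syz^{2n}M]=(\rk\syz^{2n}M-r)[R]+[M]$ in $\h(R)_\R$, and combining the odd-step telescope with your reflection identity gives $[(\syz^{2n+1}M)^\ast]=(\rk\syz^{2n+1}M-r)[R]+[M]$. If $[M]\in\partial\c(R)$ and $\ell$ is a nonzero functional, nonnegative on $\c(R)$, with $\ell([M])=0$, then evaluating $\ell$ on these two classes --- both of which lie in $\c(R)$, since syzygies and $R$-duals of maximal Cohen--Macaulay modules over the Gorenstein ring $R$ are maximal Cohen--Macaulay --- yields $a\left(\rk\syz^iM-r\right)\ge0$ with $a=\ell([R])>0$, hence $\rk\syz^iM\ge r$ for all $i\ge0$. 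The cases $i<0$ then follow from $\rk\syz^{-i}M=\rk\syz^i(M^\ast)$ together with the fact (which you proved) that $[M^\ast]$ is also a boundary point, exactly as in your last step. The paper phrases the same estimate contrapositively: if $\rk\syz^iM<r$, the displayed identities exhibit $[M]$ (resp.\ $[M^\ast]$) as $c[R]+z$ with $c>0$ and $z\in\c(R)$, which is an interior point by Proposition \ref{35}, contradicting the boundary hypothesis.
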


It is unknown if $\ccm(R)$ is polyhedral.
If $\c(R)$ is polyhedral and $R$ is a Cohen--Macaulay local ring with $\dim R\le3$, then $\ccm(R)$ is polyhedral as well.
Thus it is interesting to think of the question asking if $\c(R)$ is polyhedral.
We do not know if this question is itself affirmative, but show the following result.

\begin{thm}[Theorem \ref{11}]\label{d}
Let $R\subseteq S$ be a finite extension of Cohen--Macaulay local domains.
Suppose that $R$ is normal, $\Hom_R(S,R)$ is maximal Cohen--Macaulay, $\h(S)_\R$ is finite-dimensional, and $\c(S)$ is polyhedral.
Let $V$ be the convex polyhedral subcone of $\c(R)$ spanned by the images of the generators of $\c(S)$.
Let $M$ be an $R$-module which has unbounded Betti numbers and is locally free in codimension one.
Then $[\syz_R^nM]$ is in $V$ for infinitely many $n\in\NN$, where $\syz_R^nM$ denotes the $n$th syzygy of the $R$-module $M$.
\end{thm}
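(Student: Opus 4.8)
The plan is to reduce everything to the linear algebra of the decomposition $\h(R)_\R=\R[R]\oplus\k(R)_\R$ together with a convergence-of-directions argument. First I would record two elementary facts about the syzygies $X_n:=\syz_R^nM$. Since $R$ is Cohen--Macaulay of dimension $d:=\dim R$, the module $X_n$ is maximal Cohen--Macaulay for every $n\ge d$, so $[X_n]\in\c(R)$; moreover, being a syzygy of a module that is locally free in codimension one, each $X_n$ is again locally free in codimension one, so its class decomposes as $[X_n]=r_n[R]+\gamma_n$ with $r_n=\rk X_n$ and $\gamma_n\in\k(R)_\R$. Applying additivity of the class map to the exact sequences $0\to X_{n+1}\to F_n\to X_n\to0$, in which $F_n$ is free and hence contributes $0$ to the $\k(R)_\R$-part, gives $\gamma_{n+1}=-\gamma_n$, so $\gamma_n=(-1)^n\gamma_0$ has constant norm; the same sequences give $\beta_n=r_n+r_{n+1}$ for the Betti numbers. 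As $M$ has unbounded Betti numbers, the ranks $r_n$ are unbounded, so there is a subsequence with $r_{n_k}\to\infty$. Dividing by $r_{n_k}$, the points $[X_{n_k}]/r_{n_k}=[R]+(-1)^{n_k}\gamma_0/r_{n_k}$ converge to $[R]$; that is, the rays $\R_{\ge0}[X_{n_k}]$ converge to $\R_{\ge0}[R]$.

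Granting this, it suffices to prove that $[R]$ lies in the \emph{interior} of $V$, for then all but finitely many of the rays $\R_{\ge0}[X_{n_k}]$ lie in $V$, giving $[X_{n_k}]\in V$ for infinitely many $k$. To this end I would use the restriction-of-scalars map $\rho\colon\h(S)_\R\to\h(R)_\R$. It is well defined, since restriction of a pseudo-zero $S$-module is pseudo-zero over $R$ (a finite extension preserves codimension); it carries maximal Cohen--Macaulay $S$-modules to maximal Cohen--Macaulay $R$-modules; and by linearity $\rho(\c(S))=V$. The first key point is that $\rho$ is surjective: on the rank coordinate it multiplies by the degree $e=[\,\operatorname{Frac}S:\operatorname{Frac}R\,]\ne0$, and on the $\k$-part its composite with the codimension-one pull-back $\h(R)_\R\to\h(S)_\R$ is multiplication by $e$, by the degree formula for finite extensions of normal domains. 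The second key point is that $[S]$ lies in the interior of $\c(S)$: for each maximal Cohen--Macaulay $S$-module $L$ the syzygy relation gives $\beta[S]-[L]=[\syz_S L]\in\c(S)$ for a suitable $\beta$, which is exactly the statement that $[S]$ is interior (and, en route, that $\c(S)$ is full-dimensional).

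Finally I would combine these. The hypothesis that $\Hom_R(S,R)$ is maximal Cohen--Macaulay makes $\omega:=\Hom_R(S,R)$ a maximal Cohen--Macaulay $S$-module, so $[\omega]\in\c(S)$; since $R$ is normal and $S$ is reflexive over $R$, the $R$-dual $(-)^\ast=\Hom_R(-,R)$ negates the class in $\k(R)\cong\Cl(R)$, whence $[S|_R]+[\omega|_R]=2e[R]$ in $\h(R)_\R$. Thus $z_0:=\tfrac12[S]+\tfrac12[\omega]$ is the average of the interior point $[S]$ and a point of $\c(S)$, hence itself interior to $\c(S)$, and $\rho(z_0)=e[R]$. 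As $\rho$ is a surjective linear map and $\c(S)$ is full-dimensional, $\rho$ is open, so $e[R]=\rho(z_0)\in\rho(\operatorname{int}\c(S))\subseteq\operatorname{int}V$; therefore $[R]\in\operatorname{int}V$, and the convergence above finishes the proof. The main obstacle is the surjectivity of $\rho$ on $\k(R)_\R$—equivalently the degree identity $\rho\circ(-\otimes_R S)=e\cdot\mathrm{id}$ on classes—since it is precisely this that forces the limiting ray $\R_{\ge0}[R]$ into the interior, rather than the boundary, of the proper subcone $V$; verifying it cleanly in the present setting, where $S$ need not be normal, is where I expect the real care to be needed.
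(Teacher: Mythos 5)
Your proposal is correct, and its skeleton coincides with the paper's proof of Theorem \ref{11}(2): one shows $[R]\in\int V$ by pushing an interior point of $\c(S)$ built from $S$ and $\Hom_R(S,R)$ through the open linear surjection $\h(S)_\R\to\h(R)_\R$ and invoking the duality identity $[S]+[\Hom_R(S,R)]=2e[R]$ in $\h(R)_\R$ (Proposition \ref{21}), and then concludes from the convergence $r_n^{-1}[\syz_R^nM]=[R]+(-1)^nr_n^{-1}\gamma_0\to[R]$ (where $\gamma_0=[M]-r_0[R]\in\k(R)_\R$) along a subsequence with $r_n\to\infty$, which exists because $\beta_n=r_n+r_{n+1}$ is unbounded. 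What is genuinely different is how you supply the two ingredients that the paper imports from earlier sections. First, for surjectivity of the restriction map $\rho$, the paper (Proposition \ref{39}(2)) uses Bourbaki filtrations and the exact sequences $0\to(R/\p_i)^{\oplus r_i}\to S/\q_i\to C_i\to0$; you instead use the pull-back $\iota$ and the identity $\rho\circ\iota=e\cdot\mathrm{id}$ on $\k(R)_\R$. Your route is sound, and the care you anticipate at the end is not really needed: since $R$ is normal, for a height-one prime $\p$ the module $S_\p$ is finite and torsion-free over the discrete valuation ring $R_\p$, hence free of rank $e$, so $S/\p S$ is $R$-torsion with height-one support $\{\p\}$ and $\rho([S/\p S])=\ell_{R_\p}(S_\p/\p S_\p)[R/\p]=e[R/\p]$, with no normality of $S$ entering at all; in fact you can skip $\iota$ (whose well-definedness is the only place going-down is used) entirely, because these classes together with $\rho([S])$, which has rank $e\ne0$, already span $\h(R)_\R$. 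Note that the paper's filtration argument needs no normality hypothesis, whereas yours uses normality of $R$; both are available here. Second, for the interiority of $[S]$ in $\c(S)$, the paper has a separate construction (Proposition \ref{35}(2a), via torsion modules spanning $\k(S)_\R$, a single annihilating element $x$, and syzygies over $S/(x)$ producing a basis of $\h(S)_\R$ lying in $\c(S)$ of which $[S]$ is a strictly positive combination), whereas you use the relations $\beta_0^S(L)[S]-[L]=[\syz_SL]\in\c(S)$ for all $L\in\cm(S)$ together with full-dimensionality of $\c(S)$. This is valid and appreciably shorter, but the phrase ``which is exactly the statement that $[S]$ is interior'' hides one real step: those relations place $[S]$ in the algebraic interior (core) of the convex cone, and one must then invoke the standard fact that for a convex subset of a finite-dimensional real vector space the core equals the topological interior. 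With these two points made explicit, your argument is complete.
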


\noindent
As a special case of this theorem, it holds that for a Gorenstein normal local ring $R$ possessing a simple singularity as a finite extension, $\c(R)$ admits a convex polyhedral subcone $V$ such that for all $R$-modules $M$ having unbounded Betti numbers there exist infinitely many integers $n\ge0$ with $[\syz_R^nM]\in V$.

Dao and Kurano \cite{DK} (or possibly some other people) conjecture that under some mild assumptions a Cohen--Macaulay normal local ring $R$ has only finitely many maximal Cohen--Macaulay modules of rank one up to isomorphism.
They investigate the Cohen--Macaulay cone $\ccm(R)$ to deduce that the conjecture holds true for a hypersurface of dimension three with an isolated singularity.

Let $\c_r(R)$ be the convex cone in $\h(R)_\R$ spanned by maximal Cohen--Macaulay modules of rank $r$; note that $\c(R)=\sum_{r>0}\c_r(R)$.
For a positive integer $s$, a {\em rank $s$ maximal Cohen--Macaulay point} means a point represented by a rank $s$ maximal Cohen--Macaulay module.
We explore the convex cone $\c_r(R)$ to relate it with the conjecture of Dao and Kurano mentioned above.
We prove the following theorem.

\begin{thm}[Theorem \ref{1}]\label{a}
Let $R$ be a noetherian domain such that $\h(R)$ is finitely generated.
Let $r$ be a positive integer.
Then the following five conditions are equivalent.
\begin{enumerate}[\rm(1)]
\item
There are only finitely many rank $r$ maximal Cohen--Macaulay points in $\c_r(R)$.
\item
The convex cone $\c_r(R)$ is polyhedral.
\item
The convex cone $\c_r(R)$ is a closed subset of $\h(R)_\R$.
\item
The closure of $\c_r(R)$ meets $\k(R)_\R$ only at the origin.
\item
The set of points in $\c_r(R)$ with rank $r$ is bounded.
\end{enumerate}
When $R$ is a normal Cohen--Macaulay ring with a canonical module, {\rm(4)} is equivalent to saying that the closure of $\c_r(R)$ is a strongly convex cone.
When $r=1$ and $R$ is normal, {\rm(1)} is equivalent to saying that there exist only finitely many isomorphism classes of maximal Cohen--Macaulay $R$-modules of rank one.
\end{thm}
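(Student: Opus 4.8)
The plan is to prove the cyclic chain of implications $(1)\Rightarrow(2)\Rightarrow(3)\Rightarrow(4)\Rightarrow(5)\Rightarrow(1)$ and then treat the two supplementary assertions separately. First I would set up coordinates: since $\h(R)$ is finitely generated, $\h(R)_\R$ is a finite-dimensional Euclidean space, the rank induces a linear functional $\rk\colon\h(R)_\R\to\R$, and I write $H_r=\{x\in\h(R)_\R : \rk(x)=r\}$ for the affine hyperplane of rank-$r$ points and $\k(R)_\R=\{\rk=0\}$ for the parallel linear hyperplane through the origin. The crucial observation is that every generator $[M]$ of $\c_r(R)$ (with $M$ maximal Cohen--Macaulay of rank $r$) lies on $H_r$; consequently a positive combination $\sum_i a_i[M_i]$ has rank $r\sum_i a_i$, so $\c_r(R)\cap H_r$ equals the convex hull of the rank-$r$ maximal Cohen--Macaulay points, which is exactly the set appearing in $(5)$, while $\c_r(R)\cap\k(R)_\R=\{0\}$ automatically (a nonnegative combination has rank $0$ only if all coefficients vanish).

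Given this dictionary, most implications are routine convex geometry. For $(1)\Rightarrow(2)$ a finite generating set makes $\c_r(R)$ a finitely generated cone, hence polyhedral by Minkowski--Weyl in the finite-dimensional space $\h(R)_\R$; $(2)\Rightarrow(3)$ holds because polyhedral cones are closed; and $(3)\Rightarrow(4)$ is immediate, since closedness gives $\overline{\c_r(R)}=\c_r(R)$, whose intersection with $\k(R)_\R$ is $\{0\}$ by the remark above. The substantive step is $(4)\Rightarrow(5)$, which I would prove by contraposition: if the slice $\c_r(R)\cap H_r$ is unbounded, choose $p_n$ in it with $\|p_n\|\to\infty$; passing to a subsequence, $p_n/\|p_n\|$ converges to a unit vector $u\in\overline{\c_r(R)}$, and since $\rk(p_n)=r$ is constant, $\rk(u)=0$, so $0\ne u\in\overline{\c_r(R)}\cap\k(R)_\R$, contradicting $(4)$. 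Finally $(5)\Rightarrow(1)$ uses arithmetic rather than geometry: the image of the finitely generated group $\h(R)$ in $\h(R)_\R$ is a discrete lattice, so the rank-$r$ maximal Cohen--Macaulay points, being lattice points inside the bounded slice from $(5)$, are finite in number.

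For the first supplementary claim I would show that, for normal Cohen--Macaulay $R$ with canonical module $\omega$, condition $(4)$ is equivalent to strong convexity of $\overline{\c_r(R)}$. One direction is general: because every point of $\c_r(R)$ has nonnegative rank, any line contained in $\overline{\c_r(R)}$ must lie in $\{\rk=0\}=\k(R)_\R$, so $(4)$ forces the absence of lines, i.e.\ strong convexity. For the converse I would invoke the canonical duality $D=\Hom_R(-,\omega)$, an exact involutive duality on maximal Cohen--Macaulay modules preserving rank; it induces a linear automorphism $\delta$ of $\h(R)_\R$ carrying $\c_r(R)$ onto itself, and a rank-one computation using $\Cl(R)$ (with $[D(M)]=[\omega]-[M]+[R]$) shows $\delta$ acts as $-\mathrm{id}$ on $\k(R)_\R$. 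Hence if $v\in\overline{\c_r(R)}\cap\k(R)_\R$ then $-v=\delta(v)\in\overline{\c_r(R)}$ as well, so $\R v\subseteq\overline{\c_r(R)}$ and strong convexity forces $v=0$, giving $(4)$. I expect verifying the sign $\delta|_{\k(R)_\R}=-\mathrm{id}$ --- the precise point where normality and the canonical module enter --- to be the main obstacle, since one must see that the class-group duality is exactly inversion.

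For the last assertion, with $r=1$ and $R$ normal, I would argue that sending an isomorphism class of rank-one maximal Cohen--Macaulay module $M$ to its class $[M]\in\h(R)$ is injective. Indeed $[M]-[R]$ lies in $\k(R)\cong\Cl(R)$ and, under this identification, equals the divisor class of $M$; since rank-one maximal Cohen--Macaulay modules over a normal domain are divisorial and hence classified up to isomorphism by their divisor classes, $[M]=[N]$ forces $M\cong N$. Therefore the number of rank-one maximal Cohen--Macaulay points equals the number of isomorphism classes of such modules, so $(1)$ is equivalent to the finiteness of the latter, completing the proof.
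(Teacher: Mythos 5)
Your proposal follows essentially the same route as the paper's proof: the same cyclic chain of implications, the same rank-functional observations (that $\c_r(R)\cap\k(R)_\R=\{0\}$ and that the rank-$r$ slice of $\c_r(R)$ is the convex hull of the rank-$r$ maximal Cohen--Macaulay points), the same normalize-and-extract-a-convergent-subsequence argument for (4)$\Rightarrow$(5), the same discreteness argument for (5)$\Rightarrow$(1) (the paper packages discreteness of module classes as Proposition \ref{16}), the same duality involution for the strong-convexity supplement (the paper's $(\vv_\omega)_\R$ of Lemma \ref{62} and Proposition \ref{1p}, which by Proposition \ref{21} agrees on the cone with the linear map $x\mapsto\rk_\R(x)([R]+[\omega])-x$, acts as $-\mathrm{id}$ on $\k(R)_\R$, and preserves the cone and its closure), and the same class-group identification for the $r=1$ supplement.

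The one step that needs repair is the very last one. You correctly prove that isomorphism classes of rank-one maximal Cohen--Macaulay modules inject into $\h(R)$ (via $\k(R)\cong\Cl(R)$ and the classification of divisorial ideals by their divisor classes), but you then conclude that ``the number of rank-one maximal Cohen--Macaulay points equals the number of isomorphism classes.'' The points in condition (1) live in $\h(R)_\R$, and the map $\h(R)\to\h(R)_\R$ kills the torsion subgroup; if $\Cl(R)$ has torsion, two non-isomorphic maximal Cohen--Macaulay reflexive ideals whose classes differ by a torsion element give the same point of $\h(R)_\R$, so the asserted equality of cardinalities is false in general. What is true, and is all you need, is the finiteness equivalence: since $\h(R)$ is finitely generated, its torsion subgroup is finite, so each point of $\h(R)_\R$ is the image of only finitely many classes of $\h(R)$, and hence finitely many points still force finitely many isomorphism classes. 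This finite-torsion bookkeeping is exactly what the paper carries out explicitly with the orders $\theta_j$ of the torsion generators $[\Psi_j]$ in its proof that (b) implies (a).
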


\noindent
As an application of this theorem, we obtain the following:
Let $R$ be a homomorphic image of a Gorenstein local ring.
Assume $R$ is a normal ring of dimension at most three such that $\Cl(R)$ is a finitely generated abelian group of rank one.
Suppose there exist an $R$-module $L$ of finite length and finite projective dimension and a torsion $R$-module $T$ such that the intersection multiplicity $\chi(L,T)$ is nonzero.
Then there exist only a finite number of nonisomorphic maximal Cohen--Macaulay $R$-modules of rank one.

Next, we try to approach the Dao--Kurano conjecture in a more direct way.
We denote by $\cd I$ the {\em cohomological dimension} of $I$, that is to say, the supremum of integers $i$ such that the local cohomology $\H_I^i(R)$ does not vanish.
We consider maximal Cohen--Macaulay points on the line in $\Cl(R)$ defined by a reflexive ideal that satisfies a certain condition on cohomological dimension or asymptotic depth.

\begin{thm}[Theorems \ref{8} and \ref{3}]\label{c}
Let $R$ be a $d$-dimensional Gorenstein normal local ring.
Let $I$ be a nonzero reflexive ideal of $R$.
\begin{enumerate}[\rm(1)]
\item
Let $R$ have an isolated singularity and $d\ge2$.
Assume either that the limit of $\depth R/I^n$ is nonzero, or that $R$ is analytically irreducible and $\cd I\ne1$.
Then there exist only a finite number of maximal Cohen--Macaulay points on $\Z I$ in $\Cl(R)$.
In the latter case, one also has an isomorphism $\Z I\cong\Z$.
\item
Let $d\ge3$.
Suppose that $I$ is non-principal, Gorenstein and locally free on the punctured spectrum of $R$.
Then the point $nI\in\Cl(R)$ with $n\in\Z$ is maximal Cohen--Macaulay if and only if $n=-1,0,1$.
\end{enumerate}
\end{thm}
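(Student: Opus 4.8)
The plan is to convert every assertion into a Cohen--Macaulayness statement about quotients. Since $R$ is normal, the class $n[I]\in\Cl(R)$ is represented by the reflexive power $I^{(n)}=(I^n)^{\ast\ast}$, and $n[I]$ is a maximal Cohen--Macaulay point exactly when the rank one reflexive module $I^{(n)}$ is maximal Cohen--Macaulay. As $I$ has height one and $R$ is Gorenstein, the local cohomology long exact sequence attached to $0\to I^{(n)}\to R\to R/I^{(n)}\to0$, together with $\H_\m^i(R)=0$ for $i<d$, yields $\H_\m^i(I^{(n)})\cong\H_\m^{i-1}(R/I^{(n)})$ for $i<d$; hence $I^{(n)}$ is maximal Cohen--Macaulay if and only if $R/I^{(n)}$ is Cohen--Macaulay of dimension $d-1$. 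Note also that $R$-linear duality carries $I^{(n)}$ to $I^{(-n)}$ and preserves the maximal Cohen--Macaulay property, so the relevant set of $n$ is symmetric about $0$ and it suffices to treat $n\ge0$.

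For part (1) under the asymptotic-depth hypothesis I would exploit that an isolated singularity forces $I^{(n)}/I^n$ to have finite length, whence $I^{(n)}/I^n\subseteq\H_\m^0(R/I^n)$. By Brodmann the value $\ell=\lim_n\depth R/I^n$ is attained for $n\gg0$. If $\ell\ne0$ then $\H_\m^0(R/I^n)=0$ for $n\gg0$, so $I^n=I^{(n)}$ and $I^{(n)}$ is maximal Cohen--Macaulay precisely when $\depth R/I^n=d-1$, i.e. when $\ell=d-1$. If $\ell\le d-2$ no large $n$ qualifies and only finitely many points survive. The borderline $\ell=d-1$ is the decisive case: here Burch's inequality $\lim_n\depth R/I^n\le d-\ell(I)$ pins the analytic spread at $\ell(I)=1$, forcing a principal reduction, so that $I$---being reflexive in a normal domain, hence equal to the integral closure of that reduction---is itself principal and $\Z I=0$. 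Thus the hypothesis always yields finiteness.

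For the cohomological-dimension subcase I would pass to the direct limit. Cofinality of $\{I^{(n)}\}$ gives $\H_I^i(R)=\varinjlim_n\Ext_R^i(R/I^{(n)},R)$, and by local duality $R/I^{(n)}$ is Cohen--Macaulay of dimension $d-1$ iff $\Ext_R^i(R/I^{(n)},R)=0$ for all $i\ge2$. Were $I^{(n)}$ maximal Cohen--Macaulay for infinitely many $n$, these $\Ext$ groups would vanish cofinally, forcing $\H_I^i(R)=0$ for $i\ge2$, i.e. $\cd I\le1$, contrary to hypothesis; this gives finiteness. For the isomorphism $\Z I\cong\Z$, observe that if $[I]$ were torsion then some $I^{(m)}$ is principal, so $\cd I=\cd I^{(m)}=1$; hence $\cd I\ne1$ forces $[I]$ to have infinite order, analytic irreducibility being what legitimizes this comparison between $\cd I$ and the order of $[I]$.

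For part (2) I would analyze $R/I^{(n)}$ through the symbolic filtration. The hypothesis that $I$ is locally free of rank one on the punctured spectrum makes each graded piece $I^{(k)}/I^{(k+1)}$ a rank one reflexive $R/I$-module on the line through the conormal class in $\Cl(R/I)$, and a dévissage of $0\to I^{(k)}/I^{(k+1)}\to R/I^{(k+1)}\to R/I^{(k)}\to0$ over the Cohen--Macaulay ring $R/I$ reduces the Cohen--Macaulayness of $R/I^{(n)}$ to the maximal Cohen--Macaulayness over $R/I$ of these pieces. The Gorenstein hypothesis enters through the linkage identity $I^{-1}/R\cong\omega_{R/I}\cong R/I$, which handles $n=0,\pm1$ directly. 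The hard part, and the main obstacle, will be the converse---that $|n|\ge2$ fails. I expect this to follow by showing the conormal-type module $I/I^{(2)}$ is not maximal Cohen--Macaulay over $R/I$, equivalently that the intermediate local cohomology $\H_\m^{d-2}(R/I^{(2)})$ is nonzero, using the non-principality of $I$ together with the self-duality from the Gorenstein structure, with $d\ge3$ guaranteeing a genuine intermediate degree in which to place the obstruction.
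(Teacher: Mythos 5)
Your treatment of part (1) under the asymptotic-depth hypothesis is essentially correct, and it is a genuinely different route from the paper's: for $n\gg0$ Brodmann's stability gives $\depth R/I^n=s:=\lim_m\depth R/I^m>0$, hence $I^{(n)}:=(I^n)^{\ast\ast}=I^n$, so the question becomes whether $s=d-1$; you kill the borderline case by Burch's inequality (analytic spread at most $d-s=1$) together with the fact that principal ideals in a normal domain are integrally closed (add the standard passage to $R[t]_{\m R[t]}$ if the residue field is finite). Your deduction of $\Z I\cong\Z$ is also fine, and in fact needs less than the paper's argument, since a principal ideal has cohomological dimension at most one with no appeal to Hartshorne--Lichtenbaum. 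The genuine gap in part (1) is the cohomological-dimension case: the formula $\H_I^i(R)=\varinjlim_n\Ext_R^i(R/I^{(n)},R)$ requires the divisorial filtration $\{I^{(n)}\}$ to be cofinal with the adic filtration $\{I^n\}$, i.e.\ that every $I^n$ contain some $I^{(m)}$. Finite length of $I^{(n)}/I^n$ does not give this; it is precisely the assertion that the divisorial and $I$-adic topologies are equivalent, which (by Schenzel--McAdam) amounts to the analytic spread of $I$ being less than $d$, and under the theorem's hypotheses (no excellence or equicharacteristic assumption, only analytic irreducibility) this is a deep theorem where it is known at all (Huneke--Katz--Validashti for equicharacteristic isolated singularities), not a formality one may assume. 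The paper sidesteps this entirely: if $a_iI$ is a maximal Cohen--Macaulay point, then $(I^{(a_i)})^\ast=(I^{a_i})^\ast$ is maximal Cohen--Macaulay, and Lemma \ref{4} converts this into $\Ext_R^j(R/I^{a_i},R)=0$ for $2\le j\le d-1$ for the \emph{ordinary} powers (Lemma \ref{26}(2)); the direct limit is then taken over $\{I^{a_i}\}$, an honestly cofinal subfiltration of $\{I^n\}$. That conversion is the idea missing from your argument.

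Part (2) has two gaps. First, the d\'evissage $0\to I^{(k)}/I^{(k+1)}\to R/I^{(k+1)}\to R/I^{(k)}\to0$ cannot run the induction you need: by the depth lemma, Cohen--Macaulayness of $R/I^{(n+1)}$ gives no information about $R/I^{(n)}$ (and over $R/I$, which need not be a domain, the ``rank one reflexive'' description of the graded pieces is itself shaky), so ruling out $n=2$ does not rule out $n\ge3$. The paper obtains the descent ``$-nI$ maximal Cohen--Macaulay $\Rightarrow$ $(-n+1)I$ maximal Cohen--Macaulay'' from the exact sequences \eqref{5}, $0\to(1)\to(0)\oplus(-n+1)\to(-n)\to0$, whose construction by iterated pullbacks uses the rigidity $\Ext_R^1(I,I)=0$ --- exactly the ingredient furnished by $d\ge3$ plus local freeness on the punctured spectrum via Lemma \ref{4}, and absent from your sketch. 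Second, your crux --- that non-principality plus Gorenstein self-duality should force $\H_\m^{d-2}(R/I^{(2)})\ne0$ --- is flagged as an expectation rather than proved, and as stated it is false: in Example \ref{19}(2) the prime $\p=(x,y)$ of $k[\![x,y,z,w]\!]/(x^2w-yz)$ is a non-principal Gorenstein height-one ideal in a Gorenstein normal $3$-dimensional local ring, yet $\p^{(2)}=(x^2,y)$ is maximal Cohen--Macaulay, i.e.\ $R/\p^{(2)}$ is Cohen--Macaulay. The only hypothesis separating that example from Theorem \ref{3} is local freeness on the punctured spectrum (equivalently, rigidity), so any correct proof must use it at the crux; the paper's does --- if $(-2)$ were maximal Cohen--Macaulay, rigidity yields the extension \eqref{17}, $0\to(0)\to(-1)^{\oplus2}\to(-2)\to0$, which then splits, and dualizing plus Krull--Schmidt over $\widehat R$ forces $I\cong R$, contradicting non-principality.
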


\noindent
This theorem especially says that if such an ideal $I$ as in (1) or (2) satisfies the equality $\Cl(R)=\Z I$, then there exist only finitely many maximal Cohen--Macaulay $R$-modules of rank one up to isomorphism.

The organization of this paper is as follows.
Let $R$ be a commutative noetherian ring.
In Section 2, we state several basic properties of the abelian group $\h(R)$ and our ambient space $\h(R)_\R$.
In Section 3, we investigate topological properties of our convex cone $\c(R)$, and prove Theorem \ref{b}.
In Section 4, we consider when a given module belongs to a fixed convex polyhedral subcone of $\c(R)$, and prove Theorem \ref{d}.
In Section 5, we study topological properties of the convex cone $\c_r(R)$ in a similar context to Section 3, and prove Theorem \ref{a}.
In Section 6, we explore maximal Cohen--Macaulay points in $\Cl(R)$, and prove Theorem \ref{c}.
In this paper, we also construct various concrete examples to which our results apply.

\begin{conv}
Throughout the paper, we adopt the following convention.

We assume that all rings are commutative and noetherian, and that all modules are finitely generated.
Let $R$ be a (commutative noetherian) ring.
Let $\mod R$ be the category of (finitely generated) $R$-modules, and $\cm(R)$ the full subcategory consisting of maximal Cohen--Macaulay $R$-modules; recall that an $R$-module $M$ is called {\em maximal Cohen--Macaulay} if $\depth M_\p\ge\height\p$ for all $\p\in\spec R$ (note that $\depth0=\infty$).
Denote by $\g(R)$ the Grothendieck group of $\mod R$.
We set $(-)^\ast=\Hom_R(-,R)$.
We say that $R$ has {\em finite (resp. countable) Cohen--Macaulay representation type} if it has only finitely (resp. countably) many nonisomorphic indecomposable maximal Cohen--Macaulay modules.
For a $\Z$-module $G$, we set $G_\Q=G\otimes_\Z\Q$ and $G_\R=G\otimes_\Z\R$.
We often omit subscripts/superscripts if there is no danger of confusion.

Let $R$ be a local ring.
The {\em codimensions} of the ring $R$, an ideal $I$ of $R$ and an $R$-module $M$ are defined by $\codim R=\edim R-\dim R$, $\codim I=\dim R-\dim R/I$ and $\codim M=\dim R-\dim M$.
For an integer $n\ge0$, we denote by $\syz^n_RM$ and $\beta_n^R(M)$ the $n$th syzygy and the $n$th Betti number of $M$ in the minimal free resolution of $M$, respectively.
Note that the equality $\beta_0^R(\syz^n_RM)=\beta_n^R(M)$ holds.

Let $R$ be a domain (resp. a normal domain).
Let $M\ne0$ be a maximal Cohen--Macaulay $R$-module.
Then, $M$ has rank one if and only if $M$ is isomorphic to an ideal (resp. a reflexive ideal) of $R$ (see \cite[Proposition 1.4.1]{BH}).
Thus, in this paper, we often identify a maximal Cohen--Macaulay module of rank one and a nonzero ideal (resp. a nonzero reflexive ideal) that is a maximal Cohen--Macaulay module.

Whenever a Cohen--Macaulay ring $R$ admits a canonical module $\omega$, we set $(-)^\dag=\Hom_R(-,\omega)$.
\end{conv}

\section{The metric vector space $\h(R)_\R$}

In this section, we introduce the vector space $\h(R)_\R$ and investigate its basic properties.
To explain the motivation to study $\h(R)_\R$, we recall the definitions of the Grothendieck group modulo numerical equivalence and the intersection multiplicity of modules.

\begin{dfn}
Let $R$ be a local ring.
\begin{enumerate}[(1)]
\item
An $R$-module $M$ is called {\em numerically trivial} if $\chi_P(M)=\sum_{i=0}^\infty(-1)^i\ell_R(\H_i(P\otimes_RM))$ vanishes for all perfect complexes $P$ with finite length homologies.
For each such $P$, the assignment $M\mapsto\chi_P(M)$ extends to a map $\g(R)\to\Z$, and the notion of numerical triviality is extended to elements of $\g(R)$.
\item
The {\em Grothendieck group modulo numerical equivalence} in the sense of \cite{K2} is by definition the quotient
$$
\overline{\g(R)}=\g(R)/{\langle x\in\g(R)\mid\text{$x$ is numerically trivial}\rangle}
$$
of the Grothendieck group $\g(R)$.
\item
Let $M,N$ be $R$-modules.
Suppose that $M$ has finite projective dimension and $M\otimes_RN$ has finite length.
Then the alternating sum
$$\textstyle
\chi(M,N)=\sum_{i=0}^\infty(-1)^i\ell_R(\Tor_i^R(M,N))
$$
is well-defined.
This is called the {\em intersection multiplicity} of $M$ and $N$.
The assignment $M\mapsto\chi(M,N)$ induces a homomorphism $\chi(M,-):\g(R)\to\Z$ of $\Z$-modules.
\end{enumerate}
\end{dfn}

Now we introduce the $\Z$-module $\h(R)$ and the rank function on it.

\begin{dfn}
\begin{enumerate}[(1)]
\item
A {\em pseudo-zero} $R$-module in the sense of Bourbaki \cite[VII \S4]{Bou} is by definition an $R$-module $M$ such that $M_\p=0$ for all $\p\in\spec R$ with $\height\p\le1$.
This condition is equivalent to saying that $\height\p\ge2$ for all $\p\in\supp_RM$, which is also equivalent to simply saying that $\height(\ann M)\ge2$, where $\height R:=\infty$.
We define the group $\h(R)$ as the quotient of the Grothendieck group $\g(R)$ by the subgroup generated by the classes of pseudo-zero $R$-modules.
\item
Let $R$ be a domain.
Then taking the rank of each $R$-module defines the rank function $\rk:\h(R)\to\Z$.
We denote by $\k(R)$ the kernel of this $\Z$-module homomorphism.
It then holds that $\h(R)=\Z[R]+\k(R)\cong\Z\oplus\k(R)$.
The rank function is extended via $(-)_\R$ to the function $\rk_\R:\h(R)_\R\to\R$.
\end{enumerate}
\end{dfn}

We establish a lemma on the intersection multiplicity of modules.

\begin{lem}\label{30-0}
\begin{enumerate}[\rm(1)]
\item
Let $R$ be a Cohen--Macaulay local ring.
Then an $R$-module $M$ is numerically trivial if and only if $\chi(L,M)=0$ for every $R$-module $L$ of finite length and finite projective dimension.
\item
Let $R$ be a local ring of dimension at most three.
Let $L$ be an $R$-module of finite length and finite projective dimension.
Then the $\Z$-homomorphism $\chi(L,-):\g(R)\to\Z$ induces a $\Z$-homomorphism $\chi(L,-):\h(R)\to\Z$, which extends to an $\R$-linear map $\chi(L,-):\h(R)_\R\to\R$ by the functor $(-)_\R$.
\end{enumerate}
\end{lem}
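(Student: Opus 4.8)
The plan is to analyse, for a perfect complex $P$ with finite-length homology, the additive Euler pairing $P\mapsto\chi_P(N)$, and to compare it with the pairings $\chi(L,-)$ coming from finite-length modules $L$ of finite projective dimension. Throughout I write $d=\dim R$.

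For the equivalence in (1), the implication ``numerically trivial $\Rightarrow\chi(L,M)=0$'' is immediate: if $L$ has finite length and finite projective dimension and $F$ is its finite free resolution, then $F$ is a perfect complex whose only nonzero homology is $L$, so it has finite-length homology, and $\H_i(F\otimes_RM)=\Tor^R_i(L,M)$ is supported at $\m$ since $\supp L=\{\m\}$. Thus $\chi_F(M)=\chi(L,M)$, which vanishes when $M$ is numerically trivial. For the converse I would first record that $\chi_P(N)$ is defined for every perfect $P$ with finite-length homology and every module $N$: localizing at a prime $\p\ne\m$ turns $P_\p$ into an exact bounded complex of free modules, hence a contractible one, so $(P\otimes_RN)_\p$ is acyclic and each $\H_i(P\otimes_RN)$ has finite length. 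The assignment $P\mapsto\chi_P(N)$ is additive on distinguished triangles, so it descends to the Grothendieck group of perfect complexes with finite-length homology. Since $R$ is Cohen--Macaulay, a regular system of parameters $\underline{x}=x_1,\dots,x_d$ exists and its Koszul complex $K(\underline{x})$ is the free resolution of the finite-length, finite-projective-dimension module $R/(\underline{x})$; the key structural input is that this Grothendieck group is generated by the single class $[K(\underline{x})]$. Granting it, $[P]=n[K(\underline{x})]$ for some $n\in\Z$, so $\chi_P(M)=n\,\chi(R/(\underline{x}),M)$, which is $0$ for all such $P$ under the hypothesis of the converse; hence $M$ is numerically trivial.

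For (2), the subgroup of $\g(R)$ generated by pseudo-zero modules is generated by the classes $[R/\p]$ with $\height\p\ge2$, so it suffices to prove $\chi(L,R/\p)=0$ for every such $\p$. Here $\dim R/\p\le\dim R-\height\p\le d-2<d$. Choosing a system of parameters $\underline{x}=x_1,\dots,x_d$, its Koszul complex $K(\underline{x})$ is again a perfect complex with finite-length homology and still generates the Grothendieck group above, so if $F$ is a free resolution of $L$ then $[F]=n[K(\underline{x})]$ for some $n\in\Z$, whence $\chi(L,R/\p)=\chi_F(R/\p)=n\,\chi_{K(\underline{x})}(R/\p)=n\,\chi(\underline{x};R/\p)$, the last term being the Koszul Euler characteristic $\sum_i(-1)^i\ell_R(\H_i(\underline{x};R/\p))$. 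By Serre's theorem this vanishes whenever $\dim R/\p$ is strictly smaller than the number $d$ of parameters, which is exactly our case. Hence $\chi(L,-)\colon\g(R)\to\Z$ kills pseudo-zero modules and factors through $\h(R)$ by the universal property of the quotient, and tensoring the resulting $\Z$-homomorphism $\h(R)\to\Z$ with $\R$ produces the $\R$-linear map $\h(R)_\R\to\R$.

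The step I expect to be the real work is the structural claim that the Grothendieck group of perfect complexes with finite-length homology is generated by the class of a single Koszul complex on a system of parameters, equivalently that $\chi_P(-)$ depends on $P$ only through one integer. This can be supplied either through the classification of thick subcategories of perfect complexes (the complexes supported at $\m$ form the thick subcategory generated by $K(\underline{x})$) or, more concretely, through the local Riemann--Roch formalism. The Cohen--Macaulay hypothesis enters in (1) precisely to identify this generator with the free resolution of an honest finite-length module of finite projective dimension, so that it lies in the test family; the bound $\dim R\le3$ enters in (2) only through the estimate $\dim R/\p\le d-2<d$, which places pseudo-zero modules within the range of Serre's Koszul vanishing. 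The remaining verifications---finiteness of the relevant lengths, additivity of $\chi_P(-)$, and the passage to $\h(R)_\R$---are routine.
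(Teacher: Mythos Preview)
Your argument hinges on the claim that the Grothendieck group of perfect complexes with finite-length homology is generated by the single class $[K(\underline{x})]$, equivalently that $\chi_P(-)$ depends on $P$ only through one integer. This is false, and a counterexample appears later in this very paper: over the Cohen--Macaulay ring $R=k[\![x,y,z,w]\!]/(xw-yz)$, Dutta--Hochster--McLaughlin construct a module $L$ of finite length and finite projective dimension with $\chi(L,R/\p)=-1$ for the height-one prime $\p=(x,y)$. If $[F_L]=n[K(\underline{x})]$ held for some $n$, then $\chi(L,R/\p)=n\,\chi(\underline{x};R/\p)$; but $\dim R/\p=2<3$, so Serre's multiplicity formula gives $\chi(\underline{x};R/\p)=0$, contradicting $\chi(L,R/\p)=-1$. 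Neither of your proposed justifications rescues the claim: it is true that the perfect complexes supported at $\m$ form the thick subcategory generated by $K(\underline{x})$, but thick generation involves retracts, and passing to a direct summand need not keep one inside the cyclic subgroup $\Z[K(\underline{x})]$; and local Riemann--Roch lands in a Chow group that is typically of rank larger than one, so it cannot force cyclicity either.

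This gap breaks both parts. For (1) the paper simply cites Roberts--Srinivas, whose argument is more delicate than a reduction to a single Koszul complex. For (2) the paper does not route through Koszul complexes at all: since $\dim R\le3$ forces a pseudo-zero module $M$ to satisfy $\dim M\le1$, one has $\dim L+\dim M<\dim R$, and Foxby's vanishing theorem (via the MacRae invariant) gives $\chi(L,M)=0$ directly. The hypothesis $d\le3$ is what places pseudo-zero modules in the range $\dim M\le1$ where Foxby's technique applies, not merely an input to a Koszul estimate.
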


\begin{proof}
(1) The assertion is shown in \cite[Proposition 2]{RS}.

(2) Let $M$ be a pseudo-zero $R$-module.
If $\dim R\le1$, then $M=0$, whence $\chi(L,M)=0$.
Let $\dim R$ be $2$ or $3$.
Then we see that $\dim M\le1$, and $\dim L+\dim M<\dim R$.
It follows from \cite[Theorem (1.1) and the preceding part]{Fx} that $\chi(L,M)=0$.
Thus the $\Z$-homomorphism $\chi(L,-):\h(R)\to\Z$ is induced.
\end{proof}

Applying this lemma, we immediately obtain the following proposition.

\begin{prop}\label{47}
Let $R$ be a Cohen--Macaulay local ring of dimension at most three.
Then the canonical surjection $\g(R)\to\overline{\g(R)}$ factors through $\h(R)$.
\end{prop}

This proposition explains our motivation to study $\h(R)$; various properties of $\h(R)$ are conveyed to $\overline{\g(R)}$ through the surjective homomorphism $\h(R)\to\overline{\g(R)}$ in the case where $R$ is a Cohen--Macaulay local ring of dimension at most three.

Next we consider what ring homomorphisms $R\to S$ induce $\Z$-module homomorphisms $\h(S)\to\h(R)$.

\begin{prop}\label{39}
\begin{enumerate}[\rm(1)]
\item
Let $I$ be an ideal of $R$.
Then the canonical map $\g(R/I)\to\g(R)$ given by $[M]\mapsto[M]$ induces a homomorphism $\h(R/I)\to\h(R)$ of $\Z$-modules.
\item
Let $f:(R,\m)\to(S,\nn)$ be a finite local homomorphism of local rings.
The canonical map $\g(S)\to\g(R)$ given by $[X]\mapsto[X]$ induces a homomorphism $g:\h(S)\to\h(R)$ of $\Z$-modules.
If $f$ is injective, then $g_\Q:\h(S)_\Q\to\h(R)_\Q$ is surjective and so is $g_\R:\h(S)_\R\to\h(R)_\R$.
\end{enumerate}
\end{prop}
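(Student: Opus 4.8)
The plan is to treat both parts by the same principle. Each of the restriction-of-scalars assignments $\g(R/I)\to\g(R)$ and $\g(S)\to\g(R)$ is already a well-defined homomorphism of Grothendieck groups, since restriction of scalars along $R\to R/I$ (resp. the finite map $f$) is exact and preserves finite generation; the only thing to verify for descent to $\h(-)$ is that pseudo-zero modules are sent to pseudo-zero modules. For (1), let $M$ be a pseudo-zero $R/I$-module and $\p\in\spec R$ with $\height_R\p\le1$. If $I\not\subseteq\p$ then $M_\p=0$ because $I$ annihilates $M$. If $I\subseteq\p$, write $\bar\p=\p/I$; a chain of primes below $\bar\p$ lifts to one below $\p$, so $\height_{R/I}\bar\p\le\height_R\p\le1$, and therefore $M_\p=M_{\bar\p}=0$ since $M$ is pseudo-zero over $R/I$. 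Hence $M$ is pseudo-zero over $R$, and the map descends to $\h(R/I)\to\h(R)$.

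For the first assertion of (2) I would argue in the same spirit, replacing the quotient inequality by the height inequality for integral extensions. Let $X$ be a pseudo-zero $S$-module and $\p$ a prime of $R$ with $\height_R\p\le1$. A direct computation of supports under the localization $S_\p=S\otimes_RR_\p$ shows $X_\p=0$ unless there is some $\q\in\supp_SX$ with $f^{-1}(\q)\subseteq\p$. For such a $\q$, the map $f$ is integral (module-finite), so by incomparability any chain of primes of $S$ descending from $\q$ contracts to a strictly increasing chain of primes of $R$; this gives $\height_S\q\le\height_R f^{-1}(\q)\le\height_R\p\le1$, contradicting $\height_S\q\ge2$ (which holds because $X$ is pseudo-zero over $S$). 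Thus no such $\q$ exists, $X_\p=0$, and $X$ is pseudo-zero over $R$, so $g\colon\h(S)\to\h(R)$ is well-defined.

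It then remains to show that $g_\Q$ is surjective when $f$ is injective. Since $R/\p$ is pseudo-zero whenever $\height_R\p\ge2$, the classes $[R/\p]$ with such $\p$ vanish in $\h(R)$; as $\g(R)$ is generated by all $[R/\p]$, the group $\h(R)$ is generated by the classes $[R/\p]$ with $\height_R\p\le1$, so it suffices to place each of these in $\im g_\Q$. Fix $\p$ with $\height_R\p\le1$. By lying over for the integral injection $f$, choose $\q\in\spec S$ with $f^{-1}(\q)=\p$; then $A:=R/\p\hookrightarrow B:=S/\q$ is a module-finite extension of domains, of generic rank $e:=[\operatorname{Frac}(B):\operatorname{Frac}(A)]\ge1$. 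Picking $b_1,\dots,b_e\in B$ forming a basis of $\operatorname{Frac}(B)$ over $\operatorname{Frac}(A)$ yields a short exact sequence $0\to A^e\to B\to C\to0$ of $R$-modules with $C$ a torsion $A$-module, so every $\p'\in\supp_RC$ satisfies $\p'\supsetneq\p$ and hence $\height_R\p'\ge\height_R\p+1$. In $\g(R)$ this reads $g([S/\q])=[B]=e[R/\p]+[C]$.

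I would finish by a short descent on height. If $\height_R\p=1$, then every $\p'\in\supp_RC$ has height $\ge2$, so $C$ is pseudo-zero and $[R/\p]=\tfrac1e\,g([S/\q])\in\im g_\Q$. If $\height_R\p=0$, then in $\h(R)$ the class $[C]$ is a $\Z$-combination of classes $[R/\p']$ with $\height_R\p'=1$ (the height $\ge2$ contributions vanish), each of which already lies in the $\Q$-subspace $\im g_\Q$ by the height-one case; since $g([S/\q])\in\im g_\Q$ as well, $e[R/\p]=g([S/\q])-[C]\in\im g_\Q$, whence $[R/\p]\in\im g_\Q$. This exhausts the generators, so $g_\Q$ is surjective; tensoring this surjection with $\R$ over $\Q$ and using $\h(-)_\R\cong\h(-)_\Q\otimes_\Q\R$ gives the surjectivity of $g_\R$. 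I expect the surjectivity of $g_\Q$ to be the main obstacle: the well-definedness statements are essentially height bookkeeping via incomparability, whereas surjectivity requires identifying the explicit generating set $\{[R/\p]:\height_R\p\le1\}$, realizing each generator (up to the positive rational factor $e$) as the image of an honest $S$-module class $[S/\q]$, and handling the fact that for a minimal prime the correction term $[C]$ genuinely survives in $\h(R)$, which is exactly what forces the descending induction from height one to height zero.
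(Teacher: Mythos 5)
Your proposal is correct and follows essentially the same route as the paper's proof: the same height comparisons (chains lifting along the quotient map, incomparability/height inequality for the integral map) give well-definedness, and surjectivity of $g_\Q$ is established by the same device of prime (Bourbaki) filtrations reducing to generators $[R/\p]$ with $\height\p\le1$, a lying-over prime $\q$, the exact sequence $0\to(R/\p)^{\oplus e}\to S/\q\to C\to0$ with $C$ supported in strictly larger height, and the two-step descent from height one down to height zero. The only cosmetic differences are that you handle well-definedness of $g$ directly for any finite $f$ (the paper first reduces to injective $f$ via part (1)) and that you filter into classes $[R/\p]$ before, rather than after, introducing the correction terms $C$.
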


\begin{proof}
(1) Let $M$ be an $R/I$-module satisfying the inequality $\height(\ann_{R/I}M)\ge2$.
Then $\ann_RM$ contains $I$, and we have $(\ann_RM)/I=\ann_{R/I}M$.
It is seen that $\height(\ann_RM)\ge2$.

(2) By (1) we may assume that $f$ is injective.
Let $X$ be an $S$-module, and take $\p\in\supp_RX$.
We find $\q\in\spec S$ with $\q\cap R=\p$ by \cite[Theorem 9.3(i)]{M}, and $\height\p\ge\height\q$ by \cite[Exercise 9.8]{M}.
As $X_\p=(X_\q)_\p$, we have $\q\in\supp_SX$.
Thus, if $X$ is pseudo-zero as an $S$-module, then it is pseudo-zero as an $R$-module.
We obtain a $\Z$-homomorphism $g:\h(S)\to\h(R)$ given by $g([X])=[X]$.

Let us show that $g_\Q$ is surjective.
Fix an $R$-module $M$.
Taking a Bourbaki filtration of $M$, we can write $[M]=\sum_{i=1}^n[R/\p_i]$ in $\h(R)$, where $\p_i\in\supp_RM$ and $\height\p_i\le1$.
We find $\q_i\in\spec S$ with $\q_i\cap R=\p_i$.
The map $f$ induces a finite map $R/\p_i\hookrightarrow S/\q_i$.
There is an exact sequence $0\to(R/\p_i)^{\oplus r_i}\to S/\q_i\to C_i\to0$ of $R/\p_i$-modules with $r_i=\rk_{R/\p_i}S/\q_i$.
Note that $\supp_RC_i\subseteq\v(\p_i)\setminus\{\p_i\}$.
We proceed with two steps.

(i) Suppose that $\height\p\ge1$ for all $\p\in\supp_RM$.
Then for each $1\le i\le n$ we have $\height\p_i=1$, and see that $C_i$ is pseudo-zero as an $R$-module.
We get $[S/\q_i]=r_i[R/\p_i]$ in $\h(R)$, and $[R/\p_i]=g_\Q(r_i^{-1}[S/\q_i])$.
It follows that $[M]=\sum_{i=1}^n[R/\p_i]$ belongs to the image of the map $g_\Q$.

(ii) Now we consider the general case.
Note that $\height\p\ge1$ for all $\p\in\supp_RC_i$ and all $1\le i\le n$.
By (i) each $[C_i]$ belongs to $\im g_\Q$.
Hence $[R/\p_i]=r_i^{-1}([S/\q_i]-[C_i])\in\im g_\Q$, and $[M]=\sum_{i=1}^n[R/\p_i]\in\im g_\Q$.
\end{proof}

Here we recall the definition of the determinant map.

\begin{dfn}
Let $R$ be a normal domain.
The {\em determinant map} (or the {\em first Chern class}) $\cl:\h(R)\to\Cl(R)$ is defined, and the maps $\cl:\k(R)\to\Cl(R)$ and $\binom{\rk}{\cl}:\h(R)\to\Z\oplus\Cl(R)$ are isomorphisms; see \cite[Proposition 2.11]{ncr}, \cite[Remark 2.1]{K} and \cite[Lemma (13.3)]{Y}.
For a torsion $R$-module $T$ one has
$$\textstyle
\cl(T):=\cl([T])=-\sum_{\p\in\spec R,\,\height\p=1}\ell_{R_\p}(T_\p)\cdot\p,
$$
for which we refer to \cite[VII \S4]{Bou} and \cite[the proof of Proposition 3.1(2)]{qgor}.
Note that $\cl(R/I)=-I$ for a nonzero reflexive ideal $I$ of $R$.
The determinant map is extended to a map $\cl_\R:\h(R)_\R\to\Cl(R)_\R$.
\end{dfn}

We establish a lemma which is used not only in the next proposition but also later.

\begin{lem}\label{61}
Let $R$ be a normal domain.
Let $M$ be a torsion $R$-module.
Let $N$ be an $R$-module such that $N_\p$ is $R_\p$-free for all $\p\in\spec R$ with $\height\p=1$.
Then $[\Ext_R^1(M,N)]=(\rk_RN)[M]$ in $\h(R)$.
\end{lem}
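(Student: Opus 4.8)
The plan is to exploit the isomorphism $\binom{\rk}{\cl}\colon\h(R)\xrightarrow{\sim}\Z\oplus\Cl(R)$ recorded above, which reduces the desired equality to comparing the rank and determinant components of the two classes. First I would note that $E:=\Ext_R^1(M,N)$ is a finitely generated $R$-module, so that $[E]$ makes sense in $\h(R)$, and that $\supp_RE\subseteq\supp_RM$ (since $\Ext$ commutes with localization for finitely generated modules over a noetherian ring, and $E_\p=\Ext^1_{R_\p}(M_\p,N_\p)=0$ whenever $M_\p=0$). As $M$ is torsion, $(0)\notin\supp_RM$, whence $E$ is torsion as well. In particular $\rk_RE=0=(\rk_RN)\cdot\rk_RM$, so the rank components of $[E]$ and $(\rk_RN)[M]$ agree, and it remains to prove $\cl(E)=(\rk_RN)\cl(M)$ in $\Cl(R)$.

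Since $E$ and $M$ are both torsion, I would apply the formula $\cl(T)=-\sum_{\height\p=1}\ell_{R_\p}(T_\p)\cdot\p$ to each of them. Comparing the coefficients of a fixed height-one prime $\p$, the equality $\cl(E)=(\rk_RN)\cl(M)$ is then equivalent to the family of local length identities
\[
\ell_{R_\p}\bigl(\Ext^1_{R_\p}(M_\p,N_\p)\bigr)=(\rk_RN)\cdot\ell_{R_\p}(M_\p)
\qquad(\height\p=1),
\]
where I have again used that $\Ext$ localizes. Thus the problem is localized to a single height-one prime.

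Finally I would carry out the computation at such a $\p$. Because $R$ is normal, $R_\p$ is a discrete valuation ring; by hypothesis $N_\p$ is $R_\p$-free, necessarily of rank $n:=\rk_RN$, so $\Ext^1_{R_\p}(M_\p,N_\p)\cong\Ext^1_{R_\p}(M_\p,R_\p)^{\oplus n}$. The module $M_\p$ is a finite-length torsion module over the DVR $R_\p$, hence a finite direct sum of cyclic modules $R_\p/\p^aR_\p$; from the free resolution $0\to R_\p\to R_\p\to R_\p/\p^aR_\p\to0$ one reads off $\Ext^1_{R_\p}(R_\p/\p^aR_\p,R_\p)\cong R_\p/\p^aR_\p$, which has the same length $a$ as $R_\p/\p^aR_\p$. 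Additivity of length then yields $\ell_{R_\p}(\Ext^1_{R_\p}(M_\p,R_\p))=\ell_{R_\p}(M_\p)$, and multiplying by $n$ gives the displayed identity. The computation is routine over a DVR; the only points requiring care are the compatibility of $\Ext$ with localization (used to reduce to height one and to see $E$ is torsion) and the identification $\rk_{R_\p}N_\p=\rk_RN$, neither of which poses a genuine obstacle.
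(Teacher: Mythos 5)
Your proposal is correct and follows essentially the same route as the paper's own proof: localize at a height-one prime $\p$, use that $R_\p$ is a DVR with $N_\p\cong R_\p^{\oplus r}$, invoke the structure theorem over a PID to identify $\Ext^1_{R_\p}(M_\p,R_\p)$ with $M_\p$ (your length computation via cyclic summands is the same argument), and conclude through the determinant map $\cl$ and the isomorphism $\h(R)\cong\Z\oplus\Cl(R)$. The only cosmetic difference is that the paper records the full isomorphism $\Ext_R^1(M,N)_\p\cong M_\p^{\oplus r}$ rather than just the equality of lengths, but these carry the same content for the purposes of $\cl$.
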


\begin{proof}
Fix $\p\in\spec R$ with $\height\p=1$.
Then $R_\p$ is a discrete valuation ring.
Put $r=\rk_R N$, and notice $N_\p\cong R_\p^{\oplus r}$.
There are isomorphisms $\Ext_R^1(M,N)_\p\cong\Ext_{R_\p}^1(M_\p,N_\p)\cong\Ext_{R_\p}^1(M_\p,R_\p)^{\oplus r}\cong M_\p^{\oplus r}$; the last isomorphism follows from the structure theorem of finitely generated modules over a principal ideal domain.
Hence $\cl(\Ext_R^1(M,N))=\cl(M^{\oplus r})$, which implies $[\Ext_R^1(M,N)]=[M^{\oplus r}]=r[M]$ in $\h(R)$.
\end{proof}

We introduce an endomorphism of $\h(R)$ and its extension to $\h(R)_\R$.

\begin{dfn}
Let $R$ be a normal domain and $N$ an $R$-module.
For an $R$-module $M$ and an integer $i\ge0$, the $R$-module $\Ext_R^i(M,N)$ is pseudo-zero if either so is $M$ or $i\ge2$ (as $R_\p$ is regular for each $\p\in\spec R$ with $\height\p\le1$).
We can define an endomorphism $\vv_N:\h(R)\to\h(R)$ of the $\Z$-module $\h(R)$ by
$$
\vv_N([M])=[\Hom_R(M,N)]-[\Ext_R^1(M,N)].
$$
Extending this, we get a linear transformation $(\vv_N)_\R:\h(R)_\R\to\h(R)_\R$ of the $\R$-vector space $\h(R)_\R$.
If $N$ has rank $1$ and $N_\p$ is $R_\p$-free for all $\p\in\spec R$ with $\height\p=1$, then $\vv_N(t)=-t$ for each $t\in\k(R)$ by Lemma \ref{61}, and in particular, $\vv_N(\k(R))=\k(R)$ and $(\vv_N)_\R(\k(R)_\R)=\k(R)_\R$.
\end{dfn}

The following proposition describes relationships in $\h(R)$ of modules and their duals.

\begin{prop}\label{21}
Let $R$ be a normal domain.
Let $M$ be an $R$-module of rank $r$ such that $M_\p$ is $R_\p$-free for all $\p\in\spec R$ with $\height\p=1$.
Then there is an equality $[M]+[M^\ast]=2r[R]$ in $\h(R)$.
If $R$ is a Cohen--Macaulay ring with a canonical module $\omega$, then one has $[M]+[M^\dag]=r([R]+[\omega])$ in $\h(R)$.
\end{prop}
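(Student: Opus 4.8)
The plan is to prove the two claimed equalities by reducing everything to a computation in height-one localizations, since $\h(R)$ is built from the Grothendieck group modulo pseudo-zero modules, and a class in $\h(R)$ is determined by its rank together with its image under $\cl$ (via the isomorphism $\binom{\rk}{\cl}\colon\h(R)\xrightarrow{\sim}\Z\oplus\Cl(R)$). So it suffices to check that both sides have the same rank and the same determinant. For the rank this is immediate: $M^\ast$ has rank $r$ because dualizing commutes with localization at the generic point, so $\rk(M)+\rk(M^\ast)=2r=\rk(2r[R])$; likewise $M^\dag=\Hom_R(M,\omega)$ has rank $r$ since $\omega$ has rank one, giving $\rk(M)+\rk(M^\dag)=2r=r\cdot\rk([R]+[\omega])$.

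The substantive point is therefore the equality of determinants, and here the idea is to exploit the hypothesis that $M_\p$ is $R_\p$-free in codimension one. First I would consider the first assertion. Choose a short exact sequence $0\to R^{\oplus n}\to R^{\oplus m}\to M^\ast\to0$ or, more efficiently, present $M$ up to pseudo-zero modules so that at every height-one prime $M_\p\cong R_\p^{\oplus r}$. At such a $\p$ the natural map is an isomorphism $(M^\ast)_\p\cong(M_\p)^\ast\cong R_\p^{\oplus r}$, so locally in codimension one the class $[M]+[M^\ast]$ agrees with $2r[R]$. Since $\cl$ is computed entirely from the height-one data (the determinant map factors through the codimension-one information, as recorded in the displayed formula for $\cl$ of a torsion module), the two sides have equal Chern class, and the isomorphism $\binom{\rk}{\cl}$ then forces the equality $[M]+[M^\ast]=2r[R]$ in $\h(R)$.

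For the second assertion I would run the parallel argument with $\omega$ in place of $R$, noting that $\omega_\p\cong R_\p$ for every height-one $\p$ because $R$ is normal (so $\omega$ is reflexive of rank one and free in codimension one). Then $(M^\dag)_\p=\Hom_{R_\p}(M_\p,\omega_\p)\cong\Hom_{R_\p}(R_\p^{\oplus r},R_\p)\cong R_\p^{\oplus r}$, and the same codimension-one comparison gives $\cl([M]+[M^\dag])=\cl(r([R]+[\omega]))$. A cleaner way to package both cases is to invoke the operator $\vv_N$: since $M$ is free in codimension one, $\Ext_R^1(M,N)$ is pseudo-zero, so $\vv_N([M])=[\Hom_R(M,N)]=[M^\ast]$ when $N=R$ and $=[M^\dag]$ when $N=\omega$. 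The main obstacle to watch is justifying that the computation of $\cl$ really is insensitive to the higher-codimension behavior of $\Hom$ and $\Ext$; this is exactly guaranteed by the definition of $\vv_N$ together with Lemma~\ref{61}, which ensures every correction term lives in the subgroup generated by pseudo-zero modules and hence vanishes in $\h(R)$. Once that is in hand, the identities $[M]+[M^\ast]=[M]+\vv_R([M])$ and $[M]+[M^\dag]=[M]+\vv_\omega([M])$ reduce to evaluating $\vv_R$ and $\vv_\omega$ on the rank-$r$ class $[M]$, which splits as $r[R]+t$ with $t\in\k(R)$, and the formula $\vv_N(t)=-t$ for a rank-one free-in-codimension-one module $N$ finishes both equalities.
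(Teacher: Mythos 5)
Your closing ``cleaner packaging'' via $\vv_N$ is correct and is essentially the paper's own proof: write $[M]=r[R]+t$ with $t\in\k(R)$ (concretely $t=[C]$ for a torsion module $C$ fitting into $0\to R^{\oplus r}\to M\to C\to0$); the hypothesis that $M$ is free in codimension one makes $\Ext_R^1(M,N)$ pseudo-zero, so $\vv_N([M])=[\Hom_R(M,N)]$; then additivity of $\vv_N$ gives $\vv_N([M])=r\,\vv_N([R])+\vv_N(t)=r[N]-t$ by Lemma \ref{61}, and adding this to $[M]=r[R]+t$ yields both equalities at once (for $N=\omega$ one needs, as you note, that $\omega$ has rank one and $\omega_\p\cong R_\p$ at height-one primes, which holds because $R_\p$ is regular, hence Gorenstein).

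However, the argument you present first, before the $\vv_N$ packaging, is not valid. You argue that since $M_\p\cong(M^\ast)_\p\cong R_\p^{\oplus r}$ for every height-one prime $\p$, and since $\cl$ ``is computed entirely from the height-one data,'' the classes $[M]+[M^\ast]$ and $2r[R]$ must have the same Chern class. This inference proves too much: the identical reasoning would show $\cl([M])=\cl(r[R])=0$ for \emph{every} $M$ that is free in codimension one, which is false --- a non-principal reflexive ideal $I$ satisfies $I_\p\cong R_\p$ at every height-one prime, yet $\cl(I)=I\ne0$ in $\Cl(R)$. The point is that $\cl([M])$ is not a function of the isomorphism classes of the localizations $M_\p$; it is computed from the torsion module $C$ in a presentation $0\to R^{\oplus r}\to M\to C\to0$, via the lengths $\ell_{R_\p}(C_\p)$, and $C$ is invisible to the modules $M_\p$ themselves. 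The substantive content of the proposition is the cancellation $\cl(M^\ast)=-\cl(M)$, and that comes from the duality $\Ext_{R_\p}^1(C_\p,R_\p)\cong C_\p$ over the discrete valuation rings $R_\p$ --- i.e., precisely Lemma \ref{61} --- which is what your second argument (and the paper) correctly uses. So the $\vv_N$ route is not merely cleaner; it is the only one of your two arguments that works.
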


\begin{proof}
An exact sequence $0\to R^{\oplus r}\to M\to C\to0$ with $C$ torsion gives $[M]=r[R]+[C]$ in $\g(R)$.
We have $[M^\ast]=\vv_R([M])=r\cdot\vv_R([R])+\vv_R([C])=r[R]-[C]$ in $\h(R)$, which shows the first assertion.
The second assertion follows from the equalities $[M^\dag]=\vv_\omega([M])=r\cdot\vv_\omega([R])+\vv_\omega([C])=r[\omega]-[C]$.
\end{proof}

From now on, we consider the extension $\h(R)_\R$ of $\h(R)$, which is an $\R$-vector space.

\begin{setup}\label{20}
Let $R$ be a domain.
\begin{enumerate}[(1)]
\item
Suppose that $\h(R)$ is finitely generated, or equivalently, that $\k(R)$ is so.
Then there exist torsion $R$-modules $\Phi_1,\dots,\Phi_\zeta,\Psi_1,\dots,\Psi_\eta$ such that
$$\textstyle
\h(R)
=\Z[R]+\k(R)
=\Z[R]+\sum_{i=1}^\zeta\Z[\Phi_i]+\sum_{j=1}^\eta\Z[\Psi_j]
\cong\Z^{\oplus(\zeta+1)}\oplus\bigoplus_{j=1}^\eta\Z/\theta_j\Z,
$$
where $\theta_j$ stands for the order of $[\Psi_j]$ in the $\Z$-module $\h(R)$.
\item
Assume $\dim_\R\h(R)_\R<\infty$, i.e., $\dim_\R\k(R)_\R<\infty$.
Let $[\Phi_1],\dots,[\Phi_\zeta]$ be an $\R$-basis of $\k(R)_\R$.
Then
$$
\h(R)_\R=\R[R]+\R[\Phi_1]+\cdots+\R[\Phi_\zeta]\cong\R^{\oplus(\zeta+1)}.
$$
We define a metric on $\h(R)_\R$ by using the metric of the Euclidean space $\R^{\oplus(\zeta+1)}$.
Thus $\h(R)_\R$ is a metric vector space with $[R],[\Phi_1],\dots,[\Phi_\zeta]$ the normal orthogonal basis.
\end{enumerate}
Whenever $\h(R)_\R$ is finite-dimensional, we adopt the notation of (2).
Note that the $\Phi_i$ in (1) satisfy the condition on the $\Phi_i$ in (2).
Whenever $\h(R)$ is finitely generated, we use the $\Phi_i$ taken as in (1).
\end{setup}

Clearly, $\h(R)_\R$ is finite-dimensional if $R$ is normal and $\Cl(R)_\R$ is finite-dimensional.
The example below says that even if $R$ is non-normal, $\h(R)_\R$ can be finite-dimensional.

\begin{ex}
Let $k$ be a field.
\begin{enumerate}[\rm(1)]
\item
Assume $\ch k\ne2$.
Let $R=k[\![x^4,x^3y,xy^3,y^4]\!]$ be a pinched Veronese subring of the formal power series ring $S=k[\![x,y]\!]$.
The integral closure of $R$ is $\overline R=k[\![x^4,x^3y,x^2y^2,xy^3,y^4]\!]=S^{(4)}$, which is a Veronese subring of $S$ and normal.
As $\overline R/R\cong k$, there is an exact sequence $0\to R\to\overline R\to k\to0$ of $R$-modules.
Hence each $R$-module $M$ admits an exact sequence $0\to V\to M\to M\otimes_R\overline R\to W\to 0$ with $V,W$ being $k$-vector spaces, and $[M]=[M\otimes_R\overline R]$ in $\h(R)$.
Letting $S^i$ be the $\overline R$-submodule of $S$ generated by monomials of degree $i$, we have $S=S^0\oplus S^1\oplus S^2\oplus S^3$.
The equalities $S^1=3S^3$ and $S^2=2S^3$ hold in $\Cl(\overline R)$, and $\Cl(\overline R)=\Z S^3\cong\Z/4\Z$ (see \cite[Theorem 2.3.1]{BG} and \cite[Lemma (4.1)]{K}).
Hence $\h(\overline R)\cong\Z\oplus\Z/4\Z$ and $\h(\overline R)_\R=\R[\overline R]$.
Proposition \ref{39}(2) implies that $\h(R)_\R$ is a homomorphic image of $\h(\overline R)_\R$.
It follows that $\dim_\R\h(R)_\R\le 1<\infty$.
\item
We can also construct an example of a Cohen--Macaulay non-normal local domain $R$ of dimension more than two with $\dim_\R\h(R)_\R<\infty$.
Let $T=k[\![s,t,u]\!]$ be a formal power series ring, and consider the two subrings $S=k[\![s^2,st,su,tu]\!]$ and $R=k[\![s^4,st,su,tu]\!]$.
Then $R\subseteq S=R\cdot1+R\cdot s^2\subseteq T$.
There is an isomorphism $S\cong k[\![x,y,z,w]\!]/(xw-yz)$, which induces $R\cong k[\![v,y,z,w]\!]/(vw^2-y^2z^2)=:A$.
Note that $(y,w)A$ belongs to the singular locus of $A$ and has height $1$.
Thus $R$ is not normal.
We have $S/R\cong R/\ann_R(\overline{s^2})$, and $\ann_R(\overline{s^2})$ contains $(st)(su)$ and $tu$.
Hence $S/R$ is a module over the ring $R/I\cong k[\![v,y,z]\!]/(yz)$, where $I:=((st)(su),tu)R$.
We have $\Cl(S)\cong\Z$ by \cite[Proposition 14.8]{F}, which shows $\h(S)\cong\Z^{\oplus2}$.
Proposition \ref{39}(2) shows $\dim_\R\h(R)_\R\le\dim_\R\h(S)_\R=2<\infty$.
\end{enumerate}
\end{ex}

The proposition below gives some information on neiborhoods of points in $\h(R)_\R$ determined by modules.
For each $x\in\h(R)_\R$, we denote by $\b_r(x)$ the open ball in $\h(R)_\R$ of radius $r$ centered at $x$.

\begin{prop}\label{16}
Let $R$ be a domain such that $\h(R)$ is finitely generated.
Let $M$ be an $R$-module.
Then $[M]$ is the only point in $\b_1([M])$ that has the form $[N]$ with $N$ an $R$-module.
\end{prop}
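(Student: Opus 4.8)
The plan is to reduce the statement to the elementary fact that two distinct points of the standard integer lattice in a Euclidean space are at distance at least one. The crucial point is that, with respect to the orthonormal basis fixed in Setup \ref{20}, every module class has \emph{integer} coordinates, so all the points of the form $[N]$ lie in a single integer lattice.

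First I would recall the structure recorded in Setup \ref{20}. Since $R$ is a domain with $\h(R)$ finitely generated, we may write $\h(R)=\Z[R]+\sum_{i=1}^\zeta\Z[\Phi_i]+\sum_{j=1}^\eta\Z[\Psi_j]$, where each $[\Psi_j]$ is a torsion element of order $\theta_j$, and where $[R],[\Phi_1],\dots,[\Phi_\zeta]$ is the normal orthogonal basis of the metric vector space $\h(R)_\R\cong\R^{\oplus(\zeta+1)}$. I would then observe that passing to $\h(R)_\R=\h(R)\otimes_\Z\R$ kills every torsion class: from $\theta_j[\Psi_j]=0$ in $\h(R)$ we get $\theta_j\cdot([\Psi_j]\otimes1)=0$ in an $\R$-vector space, hence $[\Psi_j]\otimes1=0$.

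Next I would apply this to an arbitrary $R$-module $N$. Writing $[N]=a_0[R]+\sum_{i=1}^\zeta a_i[\Phi_i]+\sum_{j=1}^\eta b_j[\Psi_j]$ in $\h(R)$ with $a_0,a_i,b_j\in\Z$, the previous step shows that the image of $[N]$ in $\h(R)_\R$ equals $a_0[R]+\sum_{i=1}^\zeta a_i[\Phi_i]$ with all coefficients in $\Z$. Thus every class $[N]$ lands in the standard integer lattice $\Z^{\oplus(\zeta+1)}\subseteq\h(R)_\R$. In particular $[M]$ and any competitor $[N]$ both have integer coordinates in the orthonormal basis.

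Finally I would invoke the lattice estimate. If $[N]\ne[M]$ as points of $\h(R)_\R$, then $[N]-[M]$ is a nonzero vector of $\Z^{\oplus(\zeta+1)}$, so at least one of its coordinates is a nonzero integer; since the basis is orthonormal this forces $\lVert[N]-[M]\rVert\ge1$, and therefore $[N]\notin\b_1([M])$. As $[M]$ itself is trivially a point of the required form lying in $\b_1([M])$, it is the only one, which is the assertion. I do not expect a genuine obstacle here: the sole subtlety is ensuring that the torsion summands really vanish in $\h(R)_\R$ so that module classes have integer coordinates, and this is immediate because $\h(R)_\R$ is an $\R$-vector space.
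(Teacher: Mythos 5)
Your proof is correct, and it reaches the conclusion by a cleaner route than the paper's. Both arguments rest on the same lattice observation: with respect to the orthonormal basis $[R],[\Phi_1],\dots,[\Phi_\zeta]$, the relevant points have integer coordinates, and two distinct points of $\Z^{\oplus(\zeta+1)}$ are at distance at least $1$. The difference is in how integrality is established. You do it purely formally: every element of $\h(R)$ is an integer combination of $[R],[\Phi_i],[\Psi_j]$, and the torsion generators $[\Psi_j]$ die under $-\otimes_\Z\R$, so the entire image of $\h(R)$ in $\h(R)_\R$ lands in the lattice $\Z[R]+\sum_i\Z[\Phi_i]$. The paper instead works only with the difference $[N]-[M]$ and realizes its coordinates module-theoretically: it first applies the rank function $\rk_\R$ to see that the $[R]$-coordinate $c_0=\rk N-\rk M$ is an integer (hence $0$ since $|c_0|<1$), and then uses exact sequences $0\to R^{\oplus r}\to M\to A\to 0$ and $0\to R^{\oplus r}\to N\to B\to 0$ with $A,B$ torsion, expanding $[A]$ and $[B]$ in the basis to conclude that each remaining coordinate $c_i=b_i-a_i$ is an integer, hence $0$. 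Your version bypasses both the rank function and these exact sequences, and it actually proves slightly more: any two distinct points of the image of $\h(R)\to\h(R)_\R$, whether or not they are classes of modules, are at distance at least $1$.
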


\begin{proof}
Since the $\Z$-module $\h(R)$ is finitely generated, we have $\h(R)=\Z[R]+\sum_{i=1}^\zeta\Z[\Phi_i]+\sum_{j=1}^\eta\Z[\Psi_j]$.
Let $N$ be an $R$-module such that $[N]\in\b_1([M])$.
Then there are real numbers $c_0,c_1,\dots,c_\zeta$ with $|c_i|<1$ for all $i$ such that the equality $[N]-[M]=c_0[R]+\sum_{i=1}^\zeta c_i[\Phi_i]$ holds in $\h(R)_\R$.
Sending this by $\rk_\R$, we get an equality $\rk N-\rk M=c_0$ in $\R$.
Hence $c_0$ is an integer, and $c_0=0$ as $-1<c_0<1$.
Put $r:=\rk M=\rk N$.
There are exact sequences $0\to R^{\oplus r}\to M\to A\to0$ and $0\to R^{\oplus r}\to N\to B\to0$ with $A,B$ torsion.
Thus $[A],[B]\in\k(R)$.
We can write $[A]=\sum_{i=1}^\zeta a_i[\Phi_i]+p$ and $[B]=\sum_{i=1}^\zeta b_i[\Phi_i]+q$ in $\h(R)$, where $a_i,b_i\in\Z$ and $p,q\in\sum_{j=1}^\eta\Z[\Psi_j]$.
There are equalities $[M]=r[R]+[A]$ and $[N]=r[R]+[B]$ in $\h(R)$, while $[A]=\sum_{i=1}^\zeta a_i[\Phi_i]$ and $[B]=\sum_{i=1}^\zeta b_i[\Phi_i]$ in $\h(R)_\R$ since $\theta_j[\Psi_j]=0$ for every $1\le j\le\eta$.
We obtain equalities $\sum_{i=1}^\zeta(b_i-a_i)[\Phi_i]=[B]-[A]=[N]-[M]=\sum_{i=1}^\zeta c_i[\Phi_i]$ in $\h(R)_\R$, which implies $b_i-a_i=c_i$ for all $1\le i\le \zeta$.
Since $b_i-a_i$ is an integer and $-1<c_i<1$, we have $c_i=0$ for all $1\le i\le \zeta$.
It follows that $[N]=[M]$, which completes the proof of the proposition.
\end{proof}

\section{The interior, closure and boundary of $\c(R)$}

A subset $C$ of an $\R$-vector space $V$ is called a {\em convex cone} if $ax+by\in C$ for all $x,y\in C$ and $a,b\in\rp$.
In this section, we investigate the structure of the interior, closure and boundary of the convex cone $\c(R)$, whose definition is given below.

\begin{dfn}
We denote by $\c(R)$ the convex cone in $\h(R)_\R$ spanned by maximal Cohen--Macaulay modules, that is,
$$\textstyle
\c(R)=\sum_{M\in\cm(R)}\rp[M]\subseteq\h(R)_\R
$$
\end{dfn}

We recall that the {\em Cohen--Macaulay cone} $\ccm(R)$ of $R$ is defined as the convex cone
$$\textstyle
\ccm(R)=\sum_{M\in\cm(R)}\rp[M]\subseteq\overline{\g(R)}_\R.
$$
Proposition \ref{47} guarantees that it is meaningful to study our convex cone $\c(R)$ to understand the Cohen--Macaulay cone $\ccm(R)$.

Here are some basic properties of the cone $\c(R)$ and its closure $\overline{\c(R)}$ in the space $\h(R)_\R$.

\begin{prop}\label{15-0}
Let $R$ be a domain.
Then the following statements hold.\\
{\rm(1)} $\c(R)\cap\k(R)_\R=\{0\}$.\qquad
{\rm(2)} $\c(R)\subseteq(\RP[R]+\k(R)_\R)\cup\{0\}$.\qquad
{\rm(3)} $\overline{\c(R)}\subseteq\rp[R]+\k(R)_\R$.\\
{\rm(4)} $\overline{\c(R)}$ is a convex cone.
\end{prop}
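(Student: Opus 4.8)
The plan is to establish the four statements in order, using the rank function $\rk_\R$ and the structure $\h(R)_\R = \R[R]+\k(R)_\R$ as the main bookkeeping tools. The key observation is that every maximal Cohen--Macaulay module $M$ over a domain $R$ is nonzero (by the depth convention) and hence has positive rank, so that $\rk_\R[M]=\rk M>0$. This single fact drives statements (1)--(3).

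For (1), I would take a point $x\in\c(R)\cap\k(R)_\R$ and write $x=\sum_k a_k[M_k]$ with $a_k\in\rp$ and $M_k\in\cm(R)$. Applying the $\R$-linear map $\rk_\R$ and using $\rk_\R\k(R)_\R=0$ yields $0=\rk_\R x=\sum_k a_k\rk M_k$. Since each $a_k\ge0$ and each $\rk M_k>0$, every $a_k$ must vanish, forcing $x=0$. For (2), any nonzero $x\in\c(R)$ again has the form $\sum_k a_k[M_k]$; decomposing $[M_k]=(\rk M_k)[R]+(\text{element of }\k(R)_\R)$ shows $x\in\rp[R]+\k(R)_\R$, and the coefficient of $[R]$ equals $\rk_\R x=\sum_k a_k\rk M_k$, which is strictly positive unless all $a_k=0$; thus $x\in\RP[R]+\k(R)_\R$. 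This gives the inclusion in (2).

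For (3), the natural approach is to pass to the closure and exploit continuity of $\rk_\R$. The half-space $\{x\in\h(R)_\R : \rk_\R x\ge0\}$ is closed, and by (2) it contains $\c(R)$, hence it contains $\overline{\c(R)}$; combined with $\overline{\c(R)}\subseteq\overline{\R[R]+\k(R)_\R}=\R[R]+\k(R)_\R$ (the latter being already closed as a linear subspace), this confines $\overline{\c(R)}$ to $\rp[R]+\k(R)_\R$. One subtlety I anticipate is that closure can introduce points on the boundary hyperplane $\rk_\R=0$, so the strict inequality of (2) relaxes to the weak inequality of (3)--this is exactly why (3) states $\rp[R]$ rather than $\RP[R]$. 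Statement (4) is the most routine: the closure of any convex cone in a finite-dimensional real vector space is again a convex cone, since the cone conditions $x+y\in C$ and $ax\in C$ (for $a\ge0$) are preserved under limits by continuity of vector addition and scalar multiplication.

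I expect the main obstacle to be purely conceptual rather than technical: making sure the passage from the strict positivity in (2) to the weak inequality in (3) is handled cleanly, and verifying that the decomposition $[M_k]=(\rk M_k)[R]+t_k$ with $t_k\in\k(R)_\R$ is legitimate--this follows from the splitting $\h(R)_\R=\R[R]\oplus\k(R)_\R$ recorded in Setup \ref{20}, together with the fact that $\rk_\R[R]=1$. None of the steps requires a hard estimate; the argument is a direct application of linearity and continuity of $\rk_\R$ plus the positivity of ranks of maximal Cohen--Macaulay modules.
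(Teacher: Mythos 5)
Your proposal is correct and takes essentially the same route as the paper: for (1) apply $\rk_\R$ and use nonnegativity of coefficients plus positivity of ranks; for (2) decompose $[M_i]=r_i[R]+[T_i]$ (the paper realizes this via an exact sequence $0\to R^{\oplus r_i}\to M_i\to T_i\to0$ with $T_i$ torsion, which is the same splitting you invoke); for (3) note that $\rp[R]+\k(R)_\R$ is closed and contains $\c(R)$ by (2); and for (4) use continuity of addition and scalar multiplication on limits of sequences in $\c(R)$. One minor slip worth noting: under the paper's convention $\depth 0=\infty$, the zero module \emph{is} maximal Cohen--Macaulay (your parenthetical has it backwards), but this is harmless, since zero summands contribute nothing to $\c(R)$ and every nonzero maximal Cohen--Macaulay module over a domain is torsion-free, hence of positive rank, which is the fact both you and the paper actually use.
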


\begin{proof}
(1) Take any element $x\in\c(R)\cap\k(R)_\R$.
We can write $x=\sum_{i=1}^ua_i[M_i]$, where $a_i$ is a nonnegative real number and $M_i$ is a maximal Cohen--Macaulay $R$-module.
Put $r_i=\rk M_i$ for each $i$.
Using the extended rank map $\rk_\R:\h(R)_\R\to\R$, we get $\rk_\R(x)=\sum_{i=1}^ua_ir_i$ in $\R$.
As $x\in\k(R)_\R=\ker(\rk_\R)$, we have $\rk_\R(x)=0$.
Since $a_i\ge0$ and $r_i>0$ for all $i$, we must have $a_i=0$ for all $i$, and obtain $x=0$.

(2) Let $0\ne x\in\c(R)$.
Write $x=\sum_{i=1}^ua_i[M_i]$, where $u\in\ZP$, $a_i\in\RP$ and $M_i$ is a maximal Cohen--Macaulay $R$-module; set $r_i=\rk M_i$.
There is an exact sequence $0\to R^{\oplus r_i}\to M_i\to T_i\to0$ with $T_i$ torsion.
We have $[M_i]=r_i[R]+[T_i]$, and get $x=\sum_{i=1}^ua_i[M_i]=\sum_{i=1}^ua_i(r_i[R]+[T_i])=(\sum_{i=1}^ua_ir_i)[R]+\sum_{i=1}^ua_i[T_i]$.
Since $\sum_{i=1}^ua_ir_i\in\RP$ and $\sum_{i=1}^ua_i[T_i]\in\k(R)_\R$, the point $x$ is in $\RP[R]+\k(R)_\R$.

(3) Note that $(\RP[R]+\k(R)_\R)\cup\{0\}$ is contained in $\rp[R]+\k(R)_\R$.
It is easy to observe that $\rp[R]+\k(R)_\R$ is a closed subset of $\h(R)_\R$.
The assertion follows from (2).

(4) Let $p,q\in\overline{\c(R)}$ and $a,b\in\rp$.
Then $p=\lim_{i\to\infty}p_i$ and $q=\lim_{i\to\infty}q_i$ for some $p_i,q_i\in\c(R)$.
As $\c(R)$ is a convex cone, $ap_i+bq_i\in\c(R)$ for each $i$.
Hence $ap+bq=\lim_{i\to\infty}(ap_i+bq_i)\in\overline{\c(R)}$.
\end{proof}

We investigate the maps $\vv_\omega$ and $(\vv_\omega)_\R$, where $\omega$ is a canonical module of a Cohen--Macaulay ring.

\begin{lem}\label{62}
Let $R$ be a Cohen--Macaulay normal domain with a canonical module $\omega$.
\begin{enumerate}[\rm(1)]
\item
The endomorphism $\vv_\omega:\h(R)\to\h(R)$ is surjective.
\item
One has $\vv_\omega([M])=[M^\dag]$ and $\vv_\omega^2([M])=[M]$ for each maximal Cohen--Macaulay $R$-module $M$.
\item
Suppose that $\h(R)_\R$ is a finite-dimensional $\R$-vector space.
\begin{enumerate}[\rm(a)]
\item
The linear transformation $(\vv_\omega)_\R:\h(R)_\R\to\h(R)_\R$ is a homeomorphism.
\item
One has $(\vv_\omega)_\R(\c(R))=\c(R)$, $(\vv_\omega)_\R(\overline{\c(R)})=\overline{\c(R)}$ and $(\vv_\omega)_\R(\int\c(R))=\int\c(R)$.
\end{enumerate}
\end{enumerate}
\end{lem}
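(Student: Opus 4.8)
The plan is to reduce everything to understanding how $\vv_\omega$ acts on the direct sum decomposition $\h(R)=\Z[R]\oplus\k(R)$, and then to bootstrap the topological statements in (3) from the algebraic ones in (1) and (2). For part (1), two facts pin down $\vv_\omega$ on the two summands. On one hand, $\omega$ is a reflexive module of rank $1$ with $\omega_\p\cong R_\p$ free for every height-one prime $\p$ (as $R_\p$ is regular); hence, by the remark following the definition of $\vv_N$, one has $\vv_\omega(t)=-t$ for all $t\in\k(R)$. On the other hand, $\vv_\omega([R])=[\Hom_R(R,\omega)]-[\Ext_R^1(R,\omega)]=[\omega]$. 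Writing $t_0:=[\omega]-[R]$, which lies in $\k(R)$ since $\rk\omega=\rk R=1$, I obtain the formula $\vv_\omega(n[R]+t)=n[R]+(nt_0-t)$. A direct substitution then shows $\vv_\omega^2(n[R]+t)=n[R]+t$, so $\vv_\omega$ is an involution on $\h(R)$; in particular it is bijective, which proves (1).

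For part (2), let $M$ be maximal Cohen--Macaulay. The standard duality theory of the canonical module gives $\Ext_R^i(M,\omega)=0$ for all $i\ge1$, so $\vv_\omega([M])=[\Hom_R(M,\omega)]=[M^\dag]$. Since $M^\dag$ is again maximal Cohen--Macaulay and $M^{\dag\dag}\cong M$, applying this computation to $M^\dag$ (or simply invoking the involution from part (1)) yields $\vv_\omega^2([M])=\vv_\omega([M^\dag])=[M^{\dag\dag}]=[M]$.

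Finally, for part (3) the finite-dimensionality hypothesis makes the extension $(\vv_\omega)_\R$ an $\R$-linear map on the Euclidean space $\h(R)_\R$. Tensoring the identity $\vv_\omega^2=\mathrm{id}$ of part (1) with $\R$ gives $(\vv_\omega)_\R^2=\mathrm{id}$, so $(\vv_\omega)_\R$ is a linear automorphism of a finite-dimensional real vector space and therefore a homeomorphism, which is (3a). For (3b), linearity together with part (2) gives $(\vv_\omega)_\R(\c(R))=\sum_{M\in\cm(R)}\rp[M^\dag]$, and since $(-)^\dag$ permutes the isomorphism classes of maximal Cohen--Macaulay modules this sum is again $\c(R)$. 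The equalities for the closure and the interior then follow formally, because a homeomorphism commutes with both operations: $(\vv_\omega)_\R(\overline{\c(R)})=\overline{(\vv_\omega)_\R(\c(R))}=\overline{\c(R)}$, and likewise $(\vv_\omega)_\R(\int\c(R))=\int(\vv_\omega)_\R(\c(R))=\int\c(R)$.

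The main obstacle is confined to part (1): the tempting route is to prove surjectivity by showing that maximal Cohen--Macaulay classes generate $\h(R)$, for instance through maximal Cohen--Macaulay approximations, but this is heavier and implicitly wants a local hypothesis. The cleaner path is to exploit the explicit ``upper triangular'' action of $\vv_\omega$ on $\Z[R]\oplus\k(R)$ recorded above; once that is in hand, the involution property makes (1), the global identity $\vv_\omega^2=\mathrm{id}$, and the bijectivity used in (3a) all immediate, while (2) and (3b) require only the familiar vanishing $\Ext_R^{>0}(M,\omega)=0$ and the reflexivity $M^{\dag\dag}\cong M$ for maximal Cohen--Macaulay modules.
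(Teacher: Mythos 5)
Your proof is correct, and it rests on the same two pillars as the paper's: the fact (recorded in the remark following the definition of $\vv_N$, via Lemma \ref{61}) that $\vv_\omega$ acts as $-\mathrm{id}$ on $\k(R)$, and canonical-module duality for maximal Cohen--Macaulay modules. The genuine difference is in how part (1) is organized, and it buys you something. The paper proves only surjectivity: it writes any module class as $r[R]+[C]$ with $C$ torsion and exhibits $[R]=\vv_\omega([\omega])$ (which already needs the duality computation $\Hom_R(\omega,\omega)\cong R$ and $\Ext_R^1(\omega,\omega)=0$) and $[C]=\vv_\omega(-[C])$ as images. You instead compute $\vv_\omega$ on the internal decomposition $\h(R)=\Z[R]\oplus\k(R)$ using only the trivial identity $\vv_\omega([R])=[\omega]$, and your formula $\vv_\omega(n[R]+t)=n[R]+(nt_0-t)$ with $t_0=[\omega]-[R]$ shows that $\vv_\omega$ is an involution on all of $\h(R)$ --- a statement strictly stronger than the paper's (1), and also stronger than its (2), which asserts $\vv_\omega^2=\mathrm{id}$ only on maximal Cohen--Macaulay classes. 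As a consequence, in (3a) you get bijectivity of $(\vv_\omega)_\R$ for free (the map is its own inverse), whereas the paper must invoke the fact that a surjective endomorphism of a finite-dimensional vector space is an automorphism; in your argument finite-dimensionality is needed only to conclude that a linear bijection is a homeomorphism. Your treatments of (2) and (3b) coincide with the paper's: duality gives $\vv_\omega([M])=[M^\dag]$, the involution $(-)^\dag$ on $\cm(R)$ gives $(\vv_\omega)_\R(\c(R))=\c(R)$, and the closure and interior statements follow formally from (3a).
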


\begin{proof}
(1) For each $R$-module $M$ of rank $r$ there is a torsion $R$-module $C$ with $[M]=r[R]+[C]$ in $\g(R)$.
We have $[R]=\vv_\omega([\omega])$ and $[C]=\vv_\omega(-[C])$, whence $\vv_\omega$ is surjective.

(2) The assertion is straightforward.

(3a) By (1) the linear transformation $(\vv_\omega)_\R$ is surjective, and hence it is an automorphism.
In general, a linear map of finite-dimensional $\R$-vector spaces is continuous.
Thus $(\vv_\omega)_\R$ is a homeomorphism.

(3b) The assertion follows from (2) and (3a).
\end{proof}

Recall that a convex cone $C$ is called {\em strongly convex} if $C\cap-C=\{0\}$.
The following result gives some information on the shapes of the convex cones $\c(R)$ and $\overline{\c(R)}$.

\begin{prop}\label{48}
Let $R$ be a Cohen--Macaulay normal domain with a canonical module $\omega$.
Suppose that $\h(R)_\R$ is a finite-dimensional $\R$-vector space.
Then the following statements hold.
\begin{enumerate}[\rm(1)]
\item
One has that $\overline{\c(R)}\cap\k(R)_\R$ is a subspace of the $\R$-vector space $\k(R)_\R$.
\item
There is an equality $\overline{\c(R)}\cap-\overline{\c(R)}=\overline{\c(R)}\cap\k(R)_\R$.
In particular, $\overline{\c(R)}$ is a strongly convex cone if and only if $\overline{\c(R)}\cap\k(R)_\R=0$.
\item
Let $x=x_0[R]+\sum_{i=1}^\zeta x_i[\Phi_i]$ and $y=x_0[\omega]-\sum_{i=1}^\zeta x_i[\Phi_i]$ be points in $\h(R)_\R$.\\
{\rm(a)} $x\in\c(R)$ if and only if $y\in\c(R)$.\qquad
{\rm(b)} $x\in\overline{\c(R)}$ if and only if $y\in\overline{\c(R)}$.
\item
If $R$ is Gorenstein, then both $\c(R)$ and $\overline{\c(R)}$ are symmetric with respect to the axis $\R[R]$.
\end{enumerate}
\end{prop}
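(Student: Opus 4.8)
The plan is to reduce all four assertions to the single linear involution $(\vv_\omega)_\R$ supplied by Lemma \ref{62}, after first pinning down its action on the orthonormal basis $[R],[\Phi_1],\dots,[\Phi_\zeta]$. Since $R^\dag=\Hom_R(R,\omega)=\omega$ we have $\vv_\omega([R])=[\omega]$, while $\omega$ has rank one and is free in codimension one (as $R_\p$ is a discrete valuation ring for $\height\p=1$), so the closing remark in the definition of $\vv_N$ gives $\vv_\omega(t)=-t$ for every $t\in\k(R)$. Therefore, for $x=x_0[R]+\sum_{i=1}^\zeta x_i[\Phi_i]$ I obtain $(\vv_\omega)_\R(x)=x_0[\omega]-\sum_{i=1}^\zeta x_i[\Phi_i]$, which is exactly the point $y$ appearing in (3). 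This computation is the engine of the whole proof.

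With this identification part (3) is immediate: by Lemma \ref{62}(3) the map $(\vv_\omega)_\R$ is a bijective homeomorphism with $(\vv_\omega)_\R(\c(R))=\c(R)$ and $(\vv_\omega)_\R(\overline{\c(R)})=\overline{\c(R)}$, so $x\in\c(R)$ iff $y=(\vv_\omega)_\R(x)\in\c(R)$, and likewise for the closure. Part (4) then follows by specialization: when $R$ is Gorenstein one has $\omega\cong R$, hence $[\omega]=[R]$, and $(\vv_\omega)_\R$ becomes the orthogonal map fixing $[R]$ and negating each $[\Phi_i]$, i.e.\ the reflection in the axis $\R[R]$ (recall $\k(R)_\R$ is the orthogonal complement of $\R[R]$ in the chosen basis). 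Invariance of $\c(R)$ and $\overline{\c(R)}$ under this reflection, just established, is precisely the claimed symmetry.

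For part (1) I would note that $\overline{\c(R)}\cap\k(R)_\R$ is a convex cone, being the intersection of the convex cone $\overline{\c(R)}$ of Proposition \ref{15-0}(4) with the subspace $\k(R)_\R$, and that it is stable under negation: if $w\in\overline{\c(R)}\cap\k(R)_\R$, then $(\vv_\omega)_\R(w)=-w$ lies in $\overline{\c(R)}$ by invariance and in $\k(R)_\R$ since the latter is a subspace, so $-w$ is again in the intersection. A convex cone closed under $w\mapsto -w$ is automatically a linear subspace, giving (1). Part (2) then splits into two inclusions. The inclusion $\overline{\c(R)}\cap\k(R)_\R\subseteq\overline{\c(R)}\cap-\overline{\c(R)}$ is exactly the negation-stability just used. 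For the reverse, take $w\in\overline{\c(R)}\cap-\overline{\c(R)}$; Proposition \ref{15-0}(3) places both $w$ and $-w$ in $\rp[R]+\k(R)_\R$, so $\rk_\R(w)\ge0$ while $\rk_\R(w)=-\rk_\R(-w)\le0$, forcing $\rk_\R(w)=0$ and hence $w\in\k(R)_\R$. The ``in particular'' clause is then the tautology that $\overline{\c(R)}$ is strongly convex, i.e.\ $\overline{\c(R)}\cap-\overline{\c(R)}=\{0\}$, exactly when the common set $\overline{\c(R)}\cap\k(R)_\R$ is trivial.

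I expect no deep obstacle in this argument: the structure is dictated by $(\vv_\omega)_\R$ and by Proposition \ref{15-0}. The only points that genuinely require care are the verification that $\omega$ is rank one and locally free in codimension one, so that the clean formula $\vv_\omega(t)=-t$ on $\k(R)$ is legitimate, and the rank bookkeeping in the reverse inclusion of (2), where one must correctly read off nonnegativity of $\rk_\R$ on $\overline{\c(R)}$ from the containment in $\rp[R]+\k(R)_\R$.
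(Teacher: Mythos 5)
Your proposal is correct and follows essentially the same route as the paper: both rest on the involution $(\vv_\omega)_\R$ from Lemma \ref{62} (which sends $x_0[R]+\sum_i x_i[\Phi_i]$ to $x_0[\omega]-\sum_i x_i[\Phi_i]$, in particular acting as $-1$ on $\k(R)_\R$), combined with the containments of Proposition \ref{15-0} to get (1), (2), and then (3), (4) from invariance of $\c(R)$ and $\overline{\c(R)}$ under this map. Your explicit verification that $\omega$ has rank one and is free in codimension one, and your rank-function phrasing of the reverse inclusion in (2), are just more detailed renderings of steps the paper leaves implicit.
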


\begin{proof}
Set $f=(\vv_\omega)_\R$.
In what follows, we tacitly use Lemma \ref{62}.

(1) Combining Proposition \ref{15-0}(4) with the fact that $\k(R)_\R$ is an $\R$-vector space, we see that $\overline{\c(R)}\cap\k(R)_\R$ is closed under sums and scalar multiplication by a nonnegative real number.
It is thus enough to verify that for each $x\in\overline{\c(R)}\cap\k(R)_\R$ it holds that $-x\in\overline{\c(R)}\cap\k(R)_\R$.
We have $f(\overline{\c(R)}\cap\k(R)_\R)=f(\overline{\c(R)})\cap f(\k(R)_\R)=\overline{\c(R)}\cap\k(R)_\R$, and hence $-x=f(x)\in\overline{\c(R)}\cap\k(R)_\R$.

(2) By (1) the set $\overline{\c(R)}\cap\k(R)_\R$ is closed under multiplication by $-1$.
This implies that $\overline{\c(R)}\cap\k(R)_\R$ is contained in $\overline{\c(R)}\cap-\overline{\c(R)}$.
By Proposition \ref{15-0}(3) the set $\overline{\c(R)}\cap-\overline{\c(R)}$ is contained in $(\rp[R]+\k(R)_\R)\cap(\rmm[R]+\k(R)_\R)=\k(R)_\R$.
Hence $\overline{\c(R)}\cap-\overline{\c(R)}$ is contained in $\overline{\c(R)}\cap\k(R)_\R$.

(3) Note that $f(x)=y$.
The assertion follows from the equalities $f(\c(R))=\c(R)$ and $f(\overline{\c(R)})=\overline{\c(R)}$.

(4) Since $R$ is a Gorenstein ring, we have $\omega=R$.
The lines $\R[R],\,\R[\Phi_1],\,\dots,\,\R[\Phi_\zeta]$ form the axes of the $(\zeta+1)$-dimensional space $\h(R)_\R$.
Thus the assertion immediately follows from (3).
\end{proof}

To explore inner points of $\c(R)$, we make a remark.

\begin{rem}\label{41}
Suppose that $x_1,\dots,x_n\in\c(R)$ form a basis of the ambient $\R$-vector space $\h(R)_\R$.
Let $a_1,\dots,a_n\in\RP$ and put $x=\sum_{i=1}^na_ix_i$.
Then $x$ is an inner point of $\c(R)$.
In fact, letting $r$ be the minimum of the distances between $x$ and the faces of the convex cone $\sum_{i=1}^n\rp x_i$, we have $\b_r(x)\subseteq\c(R)$.
We refer the reader to \cite[Exercise 3.1]{Br}.
\end{rem}

The proposition below states some properties of the interiors of convex cones.
Assertion (3a) corresponds to the result \cite[Lemma 2.5(7)]{CK} concerning the Cohen--Macaulay cone $\ccm(R)$.

\begin{prop}\label{35}
Suppose that $\h(R)_\R$ is a finite-dimensional $\R$-vector space.
\begin{enumerate}[\rm(1)]
\item
Let $V$ be a convex cone in $\h(R)_\R$.
Then the following three statements hold true.\\
{\rm(a)} If $x\in\int V$ and $y\in V$, then $x+y\in\int V$.\qquad
{\rm(b)} If $x\in\int V$ and $a\in\RP$, then $ax\in\int V$.\\
{\rm(c)} Let $x,y\in V$.
Then one has the equivalences below.
$$
x\in\int V\iff
x-\varepsilon y\in\int V\text{ for some }\varepsilon\in\RP\iff
x-\textstyle\frac{1}{n}y\in\int V\text{ for some }n\in\ZP.
$$
\item
Suppose that $R$ is a Cohen--Macaulay local domain.
Then the following two statements hold true.\\
{\rm(a)} One has $[R]\in\int\c(R)$.\qquad
{\rm(b)} If $R$ is normal and has a canonical module $\omega$, then $[\omega]\in\int\c(R)$.
\end{enumerate}
\end{prop}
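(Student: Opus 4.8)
Proposition \ref{35} has two parts. Part (1) is a collection of general convexity facts in a finite-dimensional $\R$-vector space, and part (2) applies them to $\c(R)$ via Remark \ref{41}. Let me sketch both.

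\medskip

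\textbf{Part (1).} For (a), suppose $x\in\int V$, so $\b_\varepsilon(x)\subseteq V$ for some $\varepsilon\in\RP$. The plan is to show $\b_\varepsilon(x+y)\subseteq V$. For any $z$ with $\|z\|<\varepsilon$, write $(x+y)+z=(x+z)+y$; here $x+z\in\b_\varepsilon(x)\subseteq V$ and $y\in V$, so their sum lies in $V$ because $V$ is a convex cone (closed under addition). Thus $x+y\in\int V$. For (b), if $\b_\varepsilon(x)\subseteq V$ and $a\in\RP$, then scaling by $a$ is a homeomorphism of $\h(R)_\R$ fixing the cone $V$ setwise (as $V$ is a cone), so $a\,\b_\varepsilon(x)=\b_{a\varepsilon}(ax)\subseteq V$, giving $ax\in\int V$. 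For (c), the implications are mostly formal: if $x\in\int V$, then since $x=\tfrac12 x+\tfrac12 x$ and $\tfrac12 x\in\int V$ by (b), one can absorb a small backward perturbation — more directly, $x\in\int V$ means $\b_\varepsilon(x)\subseteq V$, and for small enough $\delta\in\RP$ the point $x-\delta y$ still lies in $\b_\varepsilon(x)$ when $y\ne0$ (and trivially when $y=0$), hence in $\int V$ by openness; taking $\delta=1/n$ for large $n$ gives the third condition. The reverse implications use (a): if $x-\varepsilon y\in\int V$ and $y\in V$, then $x=(x-\varepsilon y)+\varepsilon y\in\int V$ by (a) and (b), since $\varepsilon y\in V$. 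The middle-to-right implication is immediate by choosing $n$ with $1/n\le\varepsilon$ and applying the reverse direction in reverse, or directly noting $x-\tfrac1n y\in\int V$ is a special case.

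\medskip

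\textbf{Part (2).} For (a), the goal is to exhibit a basis of $\h(R)_\R$ consisting of classes of maximal Cohen--Macaulay modules, together with a representation of $[R]$ as a strictly positive combination of them, and then invoke Remark \ref{41}. Since $R$ is a Cohen--Macaulay local domain, $R$ itself is a maximal Cohen--Macaulay module, so $[R]\in\c(R)$. The difficulty is producing MCM modules whose classes span $\h(R)_\R$ and writing $[R]$ with all positive coefficients. The natural approach: for each basis element $[\Phi_i]$ of $\k(R)_\R$, find a maximal Cohen--Macaulay module $N_i$ with a class close to the $[\Phi_i]$ direction, and combine with $[R]$. Concretely, because $\k(R)_\R$ is spanned by classes of the form $[N]-(\rk N)[R]$ for MCM modules $N$ (torsion cokernels of $R^{\oplus r}\hookrightarrow N$ contribute to $\k(R)$), one can arrange a basis $[R],[N_1],\dots,[N_\zeta]$ of $\h(R)_\R$ with each $N_i$ maximal Cohen--Macaulay. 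Then $[R]+\sum_i[N_i]$ is an interior point by Remark \ref{41}; since each $[N_i]\in\c(R)$, part (1c) (subtracting $[N_i]$ one at a time, or subtracting $\varepsilon\sum[N_i]$) pushes this interior point back to $[R]$ itself, yielding $[R]\in\int\c(R)$.

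\medskip

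For (b), once $[R]\in\int\c(R)$ is established and $R$ is normal Cohen--Macaulay with canonical module $\omega$, the cleanest route is to apply the homeomorphism $f=(\vv_\omega)_\R$ from Lemma \ref{62}(3a), which satisfies $f(\int\c(R))=\int\c(R)$ by Lemma \ref{62}(3b) and sends $[R]=\vv_\omega([\omega])$, i.e. $f([\omega])=[R]$ so $[\omega]=f^{-1}([R])$. Since $f$ is a homeomorphism carrying $\int\c(R)$ onto itself and $[R]\in\int\c(R)$, we get $[\omega]=f^{-1}([R])\in\int\c(R)$. I expect the main obstacle to be part (2a): verifying rigorously that the classes of maximal Cohen--Macaulay modules span all of $\h(R)_\R$ and that $[R]$ can be written with strictly positive coefficients against such a spanning set, so that Remark \ref{41} genuinely applies. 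The general facts in part (1) and the deduction of (2b) from (2a) are routine once the spanning statement is in hand.
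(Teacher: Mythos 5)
Your part (1) and your deduction of (2b) from (2a) via Lemma \ref{62} are correct and coincide with the paper's own arguments. The trouble is part (2a), where both halves of your sketch have genuine gaps. First, the spanning statement you defer---that $\k(R)_\R$ is spanned by classes of the form $[N]-(\rk N)[R]$ with $N$ maximal Cohen--Macaulay---is not a technical footnote but the entire content of (2a), and your parenthetical justification only shows that such classes \emph{lie in} $\k(R)_\R$, not that they span it. The paper proves spanning by a concrete construction: choose $0\ne x\in R$ annihilating $\Phi_1,\dots,\Phi_\zeta$; then $[R/(x)]=0$ in $\g(R)$, so $[\syz_{R/(x)}^jX]=(-1)^j[X]$ for all $R/(x)$-modules $X$, and sufficiently high syzygies over $R/(x)$ turn each $\Phi_i$ into a maximal Cohen--Macaulay $R/(x)$-module $T_i$ with $[T_i]=\pm[\Phi_i]$; finally $\syz_RT_i$ is a maximal Cohen--Macaulay $R$-module with $[\syz_RT_i]=b_i[R]-[T_i]$.

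Second, and more seriously, even granting a basis $[R],[N_1],\dots,[N_\zeta]$ of $\h(R)_\R$ consisting of maximal Cohen--Macaulay classes, your final step is invalid. Remark \ref{41} requires \emph{strictly positive} coefficients against the basis, and $[R]$ itself has coefficient vector $(1,0,\dots,0)$; meanwhile (1c) only permits subtracting an \emph{arbitrarily small} positive multiple of a cone element from an interior point, never a full one, so from $[R]+\sum_i[N_i]\in\int\c(R)$ you cannot conclude $[R]\in\int\c(R)$. The implication you invoke is false for general cones: in $\R^2$ take $V=\rp(1,0)+\rp(1,1)$; then $(1,0)$ and $(1,1)$ form a basis lying in $V$ and their sum $(2,1)$ is interior, yet $(1,0)$ is a boundary point of $V$. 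This is precisely why the paper introduces the extra module $T_0=\syz_{R/(x)}(T_1\oplus\cdots\oplus T_\zeta)$: it forces $[T_0]+[T_1]+\cdots+[T_\zeta]=0$ in $\g(R)$, whence $[R]=\frac{1}{b}\sum_{i=0}^\zeta[\syz_RT_i]$ with $b=\sum_{i=0}^\zeta b_i$, exhibiting $[R]$ as a strictly positive combination of the basis $[\syz_RT_0],\dots,[\syz_RT_\zeta]$ of maximal Cohen--Macaulay classes; only then does Remark \ref{41} apply. Without this (or some substitute for it), your argument does not establish (2a), and hence (2b) is also left unproved.
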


\begin{proof}
(1) As $x$ is an inner point of $ V$, there exists $\varepsilon\in\RP$ such that $\b_\varepsilon(x)$ is contained in $ V$.

(a) For each $z\in\b_\varepsilon(x+y)$, we have $\varepsilon>\|z-(x+y)\|=\|(z-y)-x\|$, which implies $z-y\in\b_\varepsilon(x)$.
Hence $z=(z-y)+y\in V$.
Thus $\b_\varepsilon(x+y)\subseteq V$, which shows that $x+y$ is an inner point of $ V$.

(b) For any $z\in\b_{a\varepsilon}(ax)$, we have $\|z-ax\|<a\varepsilon$.
Hence $\|\frac{z}{a}-x\|<\varepsilon$, and we get $\frac{z}{a}\in\b_{\varepsilon}(x)$.
Therefore $\frac{z}{a}$ belongs to $ V$, and so does $z=a\cdot\frac{z}{a}$.
Thus $\b_{a\varepsilon}(ax)$ is contained in $ V$, which deduces the assertion.

(2) Call the three conditions (a), (b) and (c) in order.
It is clear that (c) implies (b), while (b) implies (a) by (1). 
Assume that (a) holds, and let us deduce (c).
There is nothing to show if $y=0$.
Let $y\ne0$.
As $x\in\int V$, we have $\b_\delta(x)\subseteq V$ for some $\delta\in\RP$.
Choose an integer $n>0$ such that $\frac{1}{n}<\frac{\delta}{\|y\|}$.
Then $\|(x-\frac{1}{n}y)-x\|=\frac{1}{n}\|y\|<\delta$, and it follows that $x-\frac{1}{n}y\in\b_\delta(x)\subseteq V$.
This shows that $x-\frac{1}{n}y$ is an inner point of $ V$, and (c) follows.

(3) First of all, we consider the case $\zeta=0$ (recall that we adopt Setup \ref{20}).
Then $\h(R)_\R=\R[R]$ and $\c(R)=\rp[R]$.
It is clear that $[R]$ is an inner point of $\c(R)$.
If $\omega$ is a canonical module of $R$, then $[\omega]=(\rk_R\omega)[R]=[R]$, which is an inner point of $\c(R)$.
Thus we may assume $\zeta>0$.

Choose a nonzero element $x\in R$ that annihilates the torsion $R$-modules $\Phi_i$ for all $1\le i\le\zeta$.
The exact sequence $0\to R\xrightarrow{x}R\to R/(x)\to0$ implies $[R/(x)]=0$ in $\g(R)$.
It is seen that $[\syz_{R/(x)}^jX]=(-1)^j[X]$ for all $X\in\mod R/(x)$ and $j\ge0$.
Let $l_i\ge0$ be such that $T_i:=\syz_{R/(x)}^{l_i}\Phi_i$ is a nonzero maximal Cohen--Macaulay $R/(x)$-module.
Set $T_0=\syz_{R/(x)}(T_1\oplus\cdots\oplus T_\zeta)$.
This is also a maximal Cohen--Macaulay $R/(x)$-module, and we observe $[T_0]+[T_1]+\cdots+[T_\zeta]=0$ in $\g(R)$.
For each $i$ there is an exact sequence
\begin{equation}\label{14}
0\to\syz_RT_i\to R^{\oplus b_i}\to T_i\to0
\end{equation}
of $R$-modules, which gives an equality
\begin{equation}\label{65}
[\syz_RT_i]=b_i[R]-[T_i]
\end{equation}
in $\g(R)$.
Putting $b=\sum_{i=0}^\zeta b_i>0$, we get $\sum_{i=0}^\zeta[\syz_RT_i]=b[R]$ in $\g(R)$.
Hence in $\h(R)_\R$ we have
\begin{equation}\label{66}
\textstyle
[R]=\frac{1}{b}\sum_{i=0}^\zeta[\syz_RT_i].
\end{equation}

We prove that $[\syz_RT_0],\dots,[\syz_RT_\zeta]$ belong to $\c(R)$ and form an $\R$-basis of $\h(R)_\R$.
The former assertion follows from the fact that each $\syz_RT_i$ is a maximal Cohen--Macaulay $R$-module.
To show the latter, it suffices to check that $[\syz_RT_0],\dots,[\syz_RT_\zeta]$ span $\h(R)_\R$.
This is deduced by using \eqref{65}, \eqref{66} and the equalities $[T_i]=(-1)^{l_i}[\Phi_i]$ for $1\le i\le\zeta$.

Now (a) follows from Remark \ref{41}.
Assertion (b) is deduced from assertion (a), by means of (2) and (3a) of Lemma \ref{62}.
\end{proof}

Next we consider boundary points of the convex cone $\c(R)$.
We begin with recalling the definition of a totally reflexive module.

\begin{dfn}
\begin{enumerate}[(1)]
\item
An $R$-module $M$ is called {\em totally reflexive} if $M$ is reflexive and $\Ext_R^{>0}(M\oplus M^\ast,R)=0$.
Total reflexivity is preserved under $(-)^\ast$.
If $R$ is Cohen--Macaulay (resp. Gorenstein), then total reflexivity implies (resp. is equivalent to) maximal Cohen--Macaulayness.
\item
Let $(R,\m,k)$ be a local ring.
Let $M$ be a totally reflexive $R$-module.
Then there uniquely exists an exact sequence $\cdots\xrightarrow{d_2}F_1\xrightarrow{d_1}F_0\xrightarrow{d_0}F_{-1}\xrightarrow{d_{-1}}F_{-2}\xrightarrow{d_{-2}}\cdots$ of free $R$-modules such that $M\cong\im d_0$ and $\im d_i\subseteq\m F_{i-1}$ for all $i\in\Z$.
For each integer $n\ge0$, the $n$-th {\em cosyzygy} $\syz^{-n}M$ of $M$ is defined as the image of the map $d_{-n}$.
Total reflexivity is preserved under $\syz^i$ for each $i\in\Z$.
Note that $\syz^{-n}M\cong(\syz^n(M^\ast))^\ast$.
We set $\beta_{-n}^R(M)=\rk_RF_{-n}$, so that $\beta_i^R(M)=\rk_RF_i$ for all $i\in\Z$.
\end{enumerate}
\noindent
For the details of totally reflexive modules, we refer the reader to \cite{C}.
\end{dfn}

Now we can state and prove the following result on the boundary of $\c(R)$.

\begin{prop}\label{44}
Let $R$ be a Cohen--Macaulay normal local ring such that $\h(R)_\R$ is a finite-dimensional $\R$-vector space.
Let $M$ be a totally reflexive $R$-module.
Suppose that either $[M]$ or $[M^\ast]$ is a boundary point of the convex cone $\c(R)$.
Then the inequality $\rk_R(\syz^iM)\ge\rk_RM$ holds for all $i\in\Z$.
\end{prop}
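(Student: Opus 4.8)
The plan is to collapse the whole statement onto a single supporting linear functional at the given boundary point, and then to feed that functional not only the classes $[\syz^iM]$ but also the classes of their duals $[(\syz^iM)^\ast]$. The crucial structural fact is that these duals again represent maximal Cohen--Macaulay modules and sit as the axis-reflections of the $[\syz^iM]$, so that whichever ``side'' of the cone the functional controls, one of the two always lands on the controlled side.

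First I would record the inputs. Since $M$ is totally reflexive, every syzygy and cosyzygy $\syz^iM$ ($i\in\Z$) is totally reflexive, and so is each dual $(\syz^iM)^\ast$; over the Cohen--Macaulay ring $R$ total reflexivity forces maximal Cohen--Macaulayness, so all of these classes lie in $\c(R)$. Because $R$ is normal, $\syz^iM$ is reflexive, hence locally free in codimension one, so Proposition~\ref{21} applies and yields $[\syz^iM]+[(\syz^iM)^\ast]=2r_i[R]$ in $\h(R)$, where $r_i=\rk_R\syz^iM$. Writing $[\syz^iM]=r_i[R]+t_i$ with $t_i\in\k(R)_\R$, this says $[(\syz^iM)^\ast]=r_i[R]-t_i$, so the two classes are mirror images across the axis $\R[R]$ and both lie in $\c(R)$.

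Next I would build the functional. By Proposition~\ref{35}(2a) we have $[R]\in\int\c(R)$, so $\c(R)$ has nonempty interior; at the boundary point (say $[M]$) there is then a nonzero linear $\phi$ supporting $\c(R)$, and since $\c(R)$ is a cone the hyperplane may be taken through the origin, giving $\phi\ge0$ on $\c(R)$ and $\phi([M])=0$. As $[R]$ is interior and $\phi\ne0$, necessarily $\phi([R])>0$, so I normalize $\phi([R])=1$. Writing $[M]=r[R]+t$ with $r=\rk_RM$ and $t=t_0$, the equation $\phi([M])=0$ forces $\phi(t)=-r$; had $[M^\ast]=r[R]-t$ been the boundary point instead, the same computation would give $\phi(t)=+r$, so in either case $|\phi(t)|=r$. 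The short exact sequences $0\to\syz^{i+1}M\to F_i\to\syz^iM\to0$ give $[\syz^iM]+[\syz^{i+1}M]=\beta_i[R]$, whence $t_{i+1}=-t_i$ and thus $t_i=(-1)^it$, so $|\phi(t_i)|=|\phi(t)|=r$ for every $i$.

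The final step is to evaluate $\phi$ on both mirror-image classes: $\phi([\syz^iM])=r_i+\phi(t_i)\ge0$ and $\phi([(\syz^iM)^\ast])=r_i-\phi(t_i)\ge0$ together give $r_i\ge|\phi(t_i)|=r$, which is the desired inequality. I expect the main obstacle to be precisely the asymmetry that this maneuver overcomes: a single supporting functional, fed only the syzygies $[\syz^iM]$, controls one parity of $i$ but only yields the vacuous bound $r_i\ge-r$ for the other parity, and for merely Cohen--Macaulay (non-Gorenstein) $R$ the cone $\c(R)$ need not be symmetric about $\R[R]$, so one cannot simply reflect the functional. Introducing the duals $(\syz^iM)^\ast$, whose classes are the axis-reflections of $[\syz^iM]$ yet still lie in $\c(R)$ by total reflexivity, is exactly what repairs the stubborn parity and forces $r_i\ge r$ uniformly in $i$.
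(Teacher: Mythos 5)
Your proof is correct, and it rests on the same structural skeleton as the paper's: total reflexivity places every $\syz^iM$ \emph{and} every dual $(\syz^iM)^\ast$ in $\c(R)$; Proposition \ref{21} (applicable since totally reflexive modules are torsion-free, hence free at height-one primes of the normal ring $R$) exhibits $[(\syz^iM)^\ast]=r_i[R]-t_i$ as the mirror image of $[\syz^iM]=r_i[R]+t_i$, where $r_i=\rk_R\syz^iM$ and $t_i\in\k(R)_\R$; and Proposition \ref{35}(2a) makes $[R]$ interior. Where you genuinely diverge is the mechanism converting ``boundary point'' into an inequality. The paper never introduces a supporting functional: it derives the identities $[M]=(r-r_i)[R]+[\syz^iM]$ and $[M^\ast]=(r-r_i)[R]+[(\syz^iM)^\ast]$ for even $i$, and, via $[M]+[M^\ast]=2r[R]$, the identities $[M^\ast]=(r-r_i)[R]+[\syz^iM]$ and $[M]=(r-r_i)[R]+[(\syz^iM)^\ast]$ for odd $i$, and then applies the interior arithmetic of Proposition \ref{35}(1): a positive multiple of the interior point $[R]$ plus a point of the cone is interior, so a boundary point forces $r-r_i\le0$; negative $i$ is then handled by replacing $M$ with $M^\ast$ and using $\rk\syz^{-i}M=\rk\syz^i(M^\ast)$. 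You instead pass to the dual side of the convex geometry: the supporting hyperplane theorem (valid since $\c(R)$ is convex with nonempty interior, and the hyperplane slides to the origin because $\c(R)$ is a cone containing $0$ and $2[M]$) yields $\phi\ge0$ on $\c(R)$ with $\phi([M])=0$ and $\phi([R])=1$, and the relation $t_i=(-1)^it_0$, valid for all $i\in\Z$ via the complete resolution, lets the pair of inequalities $r_i\pm(-1)^i\phi(t_0)\ge0$ dispose of all parities, both cases, and both signs of $i$ at one stroke. Your route buys uniform bookkeeping with essentially no case division; the paper's buys self-containedness, invoking only facts it proves itself rather than a separation-type theorem from convex analysis. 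The two arguments are dual formulations of the same geometric fact, and both are complete.
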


\begin{proof}
Put $r=\rk M$ and $b_i=\beta_i(M)$ for each $i\in\Z$.
Let $n\ge0$ be an integer.
The exact sequence
\begin{equation}\label{43}
0\to\syz^{2n}M\to R^{\oplus b_{2n-1}}\to R^{\oplus b_{2n-2}}\to\cdots\to R^{\oplus b_1}\to R^{\oplus b_0}\to M\to0
\end{equation}
shows $[M]=(b_0-b_1+\cdots+b_{2n-2}-b_{2n-1})[R]+[\syz^{2n}M]=(r-\rk\syz^{2n}M)[R]+[\syz^{2n}M]$ in $\g(R)$.
Since $R$ and $\syz^{2n}M$ are maximal Cohen--Macaulay $R$-modules, the points $[R]$ and $[\syz^{2n}M]$ are in $\c(R)$.
In view of (1a), (1b) and (2a) of Proposition \ref{35}, we must have $r-\rk\syz^{2n}M\le0$ if $[M]\in\partial\c(R)$.

From \eqref{43} we get an exact sequence $0\to M^\ast\to R^{\oplus b_0}\to\cdots\to R^{\oplus b_{2n-1}}\to(\syz^{2n}M)^\ast\to0$, which implies $[M^\ast]=(b_0-b_1+\cdots+b_{2n-2}-b_{2n-1})[R]+[(\syz^{2n}M)^\ast]=(r-\rk\syz^{2n}M)[R]+[(\syz^{2n}M)^\ast]$ in $\g(R)$.
Similarly as above, we must have $r-\rk\syz^{2n}M\le0$ if $[M^\ast]\in\partial\c(R)$.

Proposition \ref{21} yields $[M]+[M^\ast]=2r[R]$ in $\h(R)$.
The exact sequence
\begin{equation}\label{45}
0\to\syz^{2n+1}M\to R^{\oplus b_{2n}}\to R^{\oplus b_{2n-1}}\to\cdots\to R^{\oplus b_1}\to R^{\oplus b_0}\to M\to0
\end{equation}
shows $[M]=(b_0-b_1+\cdots-b_{2n-1}+b_{2n})[R]-[\syz^{2n+1}M]=(r+\rk\syz^{2n+1}M)[R]-[\syz^{2n+1}M]$ in $\g(R)$.
We obtain $[M^\ast]=2r[R]-[M]=(r-\rk\syz^{2n+1}M)[R]+[\syz^{2n+1}M]$ in $\h(R)$.
In a similar way as above, we must have $r-\rk\syz^{2n+1}M\le0$ if $[M^\ast]\in\partial\c(R)$.

From \eqref{45} we get an exact sequence $0\to M^\ast\to R^{\oplus b_0}\to\cdots\to R^{\oplus b_{2n}}\to(\syz^{2n+1}M)^\ast\to0$.
This gives $[M^\ast]=(b_0-b_1+\cdots-b_{2n-1}+b_{2n})[R]-[(\syz^{2n+1}M)^\ast]=(r+\rk\syz^{2n+1}M)[R]-[(\syz^{2n+1}M)^\ast]$ in $\g(R)$, and hence $[M]=2r[R]-[M^\ast]=(r-\rk\syz^{2n+1}M)[R]+[(\syz^{2n+1}M)^\ast]$ in $\h(R)$.
Similarly as above, we must have $r-\rk\syz^{2n+1}M\le0$ if $[M]\in\partial\c(R)$.

Now we conclude that if either $[M]$ or $[M^\ast]$ belongs to $\partial\c(R)$, then $\rk\syz^iM\ge r$ for all $i\ge0$.
By symmetry, we observe that if either $[M]$ or $[M^\ast]$ belongs to $\partial\c(R)$, then $\rk\syz^i(M^\ast)\ge r$ for all $i\ge0$.
Note that $\rk\syz^{-i}M=\rk(\syz^i(M^\ast))^\ast=\rk\syz^i(M^\ast)$.
Therefore, $\rk\syz^iM\ge r$ for all $i\in\Z$.
\end{proof}

\section{Convex polyhedral subcones of $\c(R)$}

Recall that a convex cone $C$ is called {\em polyhedral} if there exist a finite number of vectors $v_1,\dots,v_n$ such that $C=\sum_{i=1}^n\rp v_i$.
In this section, we explore convex polyhedral subcones of $\c(R)$, that is, convex cones in $\h(R)_\R$ spanned by a finite number of maximal Cohen--Macaulay modules.
More precisely, we try to construct as big a convex polyhedral subcone of $\c(R)$ as possible.

An obvious example where $\c(R)$ is itself polyhedral is given by a ring of finite Cohen--Macaulay representation type.
According to \cite[Paragraph following Definition 2.4]{CK}, it is unknown if $\ccm(R)$ is polyhedral or not.
In view of Proposition \ref{47}, it is interesting to think about the question asking whether the convex cone $\c(R)$ is polyhedral or not.
Unfortunately, we cannot give an answer to the question itself, but prove Theorem \ref{11} below, which relates with the question in the affirmative direction.

Let $R$ be a local ring, and let $M$ be an $R$-module.
The {\em complexity} of $M$ is defined by
$$
\cx_RM=\inf\{n\in\zp\mid\text{there exists $\alpha\in\rp$ such that $\beta_i^R(M)\le\alpha\cdot i^{n-1}$ for all $i\gg0$}\}.
$$
We denote by $\e(M)$ the (Hilbert--Samuel) multiplicity of $M$.
Recall that for every Cohen--Macaulay local ring $R$ it holds that $\e(R)\ge\codim R-1$, and that $R$ is said to have {\em minimal multiplicity} if the equality holds (see \cite[Exercise 4.6.14]{BH}).
The following theorem is the main result of this section.

\begin{thm}\label{11}
Let $R\subseteq S$ be a finite extension of Cohen--Macaulay local domains.
Suppose $R$ is normal, $\h(S)_\R$ is finite-dimensional, and $\c(S)$ is polyhedral.
Let $f:\h(S)_\R\to\h(R)_\R$ be the natural surjection\footnote{the map defined as in Proposition \ref{39}(2)}, and set $V=f(\c(S))\subseteq\c(R)$.
Let $M$ be an $R$-module with $\cx_RM\ge2$ which is locally free in codimension one.
Then $[\syz_R^nM]\in V$ for infinitely many $n\in\NN$, if either of the following two conditions holds.
\begin{enumerate}[\rm(1)]
\item
There exists an $S$-module $N$, which is as an $R$-module locally free in codimension one, such that $[L]\in\int\c(S)$, where $L:=N\oplus N^\ast$.
\item
The module $S^\ast$ is maximal Cohen--Macaulay.
\end{enumerate}
\end{thm}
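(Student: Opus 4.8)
The plan is to reduce the whole statement to the single geometric assertion that the class $[R]$ lies in the interior $\int V$ of the subcone $V=f(\c(S))$; granting this, the conclusion drops out of a limiting argument along the minimal free resolution of $M$, using that $\cx_RM\ge2$ forces the ranks of the syzygies to be unbounded.

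First I would set up the syzygy bookkeeping. Writing $r=\rk_RM$ and decomposing $[M]=r[R]+\kappa$ with $\kappa\in\k(R)_\R$, the exact sequences $0\to\syz_R^{n+1}M\to R^{\oplus\beta_n}\to\syz_R^nM\to0$ (with $\beta_n=\beta_n^R(M)$) give $[\syz_R^{n+1}M]=\beta_n[R]-[\syz_R^nM]$ in $\h(R)_\R$. Comparing $\k(R)_\R$-components shows this component merely alternates in sign, so $[\syz_R^nM]=\rho_n[R]+(-1)^n\kappa$, where $\rho_n=\rk_R\syz_R^nM$. Since $\rho_n+\rho_{n+1}=\beta_n$ and $\cx_RM\ge2$ means the Betti numbers $\beta_n$ are unbounded, the sequence $(\rho_n)_n$ is unbounded; in particular $\rho_n$ exceeds any prescribed bound for infinitely many $n$.

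Next I would prove $[R]\in\int V$. I treat hypothesis (1) as the general case and reduce (2) to it by taking $N=S$: the ring $S$ is locally free in codimension one over $R$ (at a height-one prime $R_\p$ is a discrete valuation ring and $S_\p$ is a torsion-free finite $R_\p$-module, hence free), Proposition~\ref{35}(2a) applied to the ring $S$ gives $[S]\in\int\c(S)$, and the hypothesis that $S^\ast$ is maximal Cohen--Macaulay gives $[S^\ast]\in\c(S)$, so Proposition~\ref{35}(1a) yields $[S\oplus S^\ast]\in\int\c(S)$; thus (1) holds with $L=S\oplus S^\ast$. Now assuming (1), Proposition~\ref{21} applied to the $R$-module $N$ (locally free in codimension one, of rank $\rho:=\rk_RN>0$) gives that the $R$-class $[N]+[N^\ast]$ of $N\oplus N^\ast$ equals $2\rho[R]$, whence $f([L])=2\rho[R]$. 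As $f$ is a surjective linear map between finite-dimensional spaces it is open, so $f(\int\c(S))\subseteq\int V$ and therefore $2\rho[R]\in\int V$; scaling by $\frac{1}{2\rho}$ (Proposition~\ref{35}(1b)) gives $[R]\in\int V$.

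Finally comes the limiting step. Choosing $\delta\in\RP$ with $\b_\delta([R])\subseteq V$, for every $n$ with $\rho_n>\|\kappa\|/\delta$ I can write $[\syz_R^nM]=\rho_n\bigl([R]+(-1)^n\kappa/\rho_n\bigr)$, where the bracketed point lies in $\b_\delta([R])\subseteq V$; since $V$ is a cone and $\rho_n>0$, it follows that $[\syz_R^nM]\in V$. Unboundedness of $(\rho_n)_n$ supplies infinitely many such $n$, which is the assertion. I would also remark that $V=f(\c(S))$ is itself polyhedral, being the linear image of the polyhedral cone $\c(S)$, so it is genuinely a convex polyhedral subcone of $\c(R)$. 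The main obstacle is the step $[R]\in\int V$: this is exactly where the self-dual choice $N\oplus N^\ast$ is essential, for it is Proposition~\ref{21} that cancels the $\k(R)_\R$-components and leaves a strictly positive multiple of $[R]$, and it is the hypothesis $[L]\in\int\c(S)$ together with the openness of $f$ that upgrades this to an interior point.
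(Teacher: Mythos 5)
Your proposal is correct and follows essentially the same route as the paper's proof: the same syzygy decomposition $[\syz_R^nM]=\rho_n[R]+(-1)^n\kappa$ with $\rho_n$ unbounded from $\cx_RM\ge2$, the same key step $[R]\in\int V$ obtained from openness of the surjective linear map $f$ together with Proposition \ref{21}, and the same reduction of case (2) to case (1) by taking $N=S$ via Proposition \ref{35}(2a) and (1a). The only cosmetic differences are that you spell out the ball argument directly where the paper cites Proposition \ref{35}(1a)(1c), and you justify freeness of $S_\p$ at height-one primes by torsion-freeness rather than maximal Cohen--Macaulayness.
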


\begin{proof}
(1) As $f$ is a surjective linear map of finite-dimensional $\R$-vector spaces, it is an open map (by base change and openness of projection).
As high enough syzygies are maximal Cohen--Macaulay, we see that $\dim\c(S)=\dim_\R\h(S)_\R$.
It is observed by the surjectivity of $f$ that $\dim V=\dim_\R\h(R)_\R$.
Therefore $f(\int\c(S))\subseteq\int f(\c(S))=\int V$, and $[L]=f([L])\in\h(R)_\R$ is an inner point of $V$.
Proposition \ref{21} implies $[L]=[N]+[N^\ast]=2r[R]$ in $\h(R)_\R$.
Hence $[R]$ is an inner point of $V$.
Put $r_n=\rk_R\syz_R^nM$ for each $n\in\NN$.
Write $[M]=r_0[R]+[C]$, where $C$ is a torsion $R$-module.
For each $n\in\NN$ there is an equality
$$
r_n^{-1}[\syz^n_RM]=[R]+(-1)^nr_n^{-1}[C]
$$
in $\h(R)_\R$.
Since $\cx_RM\ge2$, the sequence $\{r_n\}_{n\in\NN}$ is unbounded.
It follows from (1a) and (1c) of Proposition \ref{35} that $r_n^{-1}[\syz^n_RM]$ belongs to $V$ for infinitely many $n\in\NN$, and $[\syz^n_RM]\in V$ for such $n\in\NN$.

(2) Put $N=S$ and $L=N\oplus N^\ast$.
Let $\p$ be a prime ideal of $R$ with codimension one.
As $S$ is a maximal Cohen--Macaulay $R$-module, $N_\p=S_\p$ is a maximal Cohen--Macaulay $R_\p$-module.
Since $R_\p$ is a discrete valuation ring, $N_\p$ is free as an $R_\p$-module.
It follows from (2a) and (1a) of Proposition \ref{35} that $[L]=[S]+[S^\ast]$ is an inner point of $\c(S)$.
Thus the assertion is a consequence of (1).
\end{proof}

\begin{rem}\label{46}
\begin{enumerate}[(1)]
\item
The proof of Theorem \ref{11}(2) says that the assumption in this statement that $S^\ast$ is maximal Cohen--Macaulay can be weakened to the condition that $[S^\ast]\in\c(S)$.
\item
The assumption in Theorem \ref{11}(2) that $S^\ast$ is maximal Cohen--Macaulay is satisfied if $R$ is Gorenstein.
In fact, in this case the $S$-module $S^\ast$ is a canonical module of the Cohen--Macaulay local ring $S$.
\item
The assmption in Theorem \ref{11}(1) that $[L]\in\int\c(S)$ is satisfied if $R,S$ are Gorenstein normal rings and $N$ is as an $S$-module locally free in codimension one.
Indeed, then, we have $\Hom_S(N,S)\cong\Hom_S(N,\Hom_R(S,R))\cong\Hom_R(N,R)$.
Proposition \ref{21} implies that $[L]=[N]+[\Hom_S(N,S)]=2(\rk_SN)[S]$ in $\h(S)$.
Assertions (1b) and (2a) of Proposition \ref{35} yield $[L]\in\int\c(S)$.
\item
The assumption in Theorem \ref{11} that $\cx_RM\ge2$ is satisfied if $\pd_RM=\infty$ and $R$ is not a hypersurface but has minimal multiplicity.
In fact, in this case, we have $\edim R-\depth R=\codim R\ge2$.
We obtain $\cx_RM=\infty$ by \cite[Example 5.2.8 and Theorem 5.3.3(2)]{A}.
\end{enumerate}
\end{rem}

The {\em simple singularities}, also known as ADE singularities, are by definition those hypersurfaces\footnote{To be precise, by simple singularities of dimension $d>0$ (resp. dimension $0$) we mean the quotients of a formal power series ring $k[\![x,y,z_2,\dots,z_d]\!]$ (resp. a formal power series ring $k[\![x]\!]$) over an arbitrary field $k$ by the five kinds of polynomials given in \cite[Theorem (8.8)]{Y} (resp. the polynomials $x^n$ with $n>0$).} that appear in \cite[Theorem (8.8)]{Y}.
Restricting the above theorem to the case where $S$ is a simple singularity, we obtain the following result.

\begin{cor}\label{40}
\begin{enumerate}[\rm(1)]
\item
Let $R$ be a Gorenstein normal local ring possessing a simple singularity as a finite extension.
Then $\c(R)$ has a convex polyhedral subcone $V$ such that for all $R$-modules $M$ with $\cx_RM\ge2$ there exist infinitely many integers $n\ge0$ with $[\syz^nM]\in V$.
\item
Let $R$ be a Cohen--Macaulay non-hypersurface normal local ring with minimal multiplicity.
Suppose that $R$ has a simple singularity as a finite extension whose $R$-dual is maximal Cohen--Macaulay.
Then $\c(R)$ has a convex polyhedral subcone $V$ such that for all $R$-modules $M$ there exist infinitely many integers $n\ge0$ with $[\syz^nM]\in V$.
\end{enumerate}
\end{cor}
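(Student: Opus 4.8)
The plan is to derive both statements from Theorem \ref{11}(2), taking for $S$ the given simple singularity and viewing $R\subseteq S$ as the finite extension. First I would record the properties of $S$ common to both parts. A simple singularity is a hypersurface, hence Gorenstein, and has finite Cohen--Macaulay representation type; let $X_1,\dots,X_m$ be representatives of the indecomposable maximal Cohen--Macaulay $S$-modules, so that $\c(S)=\sum_{i=1}^m\rp[X_i]$ is polyhedral. Since $S$ is Cohen--Macaulay, every sufficiently high syzygy of an $S$-module is maximal Cohen--Macaulay, and reading off classes from a free resolution shows that $\h(S)$ is generated by $[S]$ together with the $[X_i]$; hence $\h(S)$ is finitely generated and $\h(S)_\R$ is finite-dimensional. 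Thus the standing hypotheses of Theorem \ref{11} on $S$ are in force, and I put $V=f(\c(S))$, the convex polyhedral subcone of $\c(R)$ cut out by the natural surjection $f:\h(S)_\R\to\h(R)_\R$.

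For part (1), the only remaining hypothesis of Theorem \ref{11}(2) to verify is that $S^\ast=\Hom_R(S,R)$ is maximal Cohen--Macaulay, and this is precisely Remark \ref{46}(2): when $R$ is Gorenstein, $S^\ast$ is a canonical module of the Cohen--Macaulay local ring $S$, hence maximal Cohen--Macaulay. Theorem \ref{11}(2) then applies to every $R$-module $M$ with $\cx_RM\ge2$ and produces infinitely many $n$ with $[\syz^n_RM]\in V$. Since the complexity hypothesis $\cx_RM\ge2$ is exactly the one imposed in the statement of (1), the conclusion follows at once.

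For part (2), the maximal Cohen--Macaulayness of the $R$-dual of $S$ is assumed outright, so Theorem \ref{11}(2) again applies to every $M$ with $\cx_RM\ge2$. The additional work is to remove this complexity restriction and cover all $R$-modules $M$; I would argue by a dichotomy on projective dimension. If $\pd_RM<\infty$, then $\syz^n_RM=0$ for all $n>\pd_RM$, and since $0\in V$ we trivially get $[\syz^n_RM]\in V$ for infinitely many $n$. If $\pd_RM=\infty$, then because $R$ is a non-hypersurface of minimal multiplicity we have $\codim R=\edim R-\depth R\ge2$, so Remark \ref{46}(4) yields $\cx_RM=\infty\ge2$, and Theorem \ref{11}(2) supplies the required infinitely many $n$. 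Combining the two cases gives the assertion for arbitrary $M$.

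The bulk of the argument is verifying that $S$ meets the hypotheses of Theorem \ref{11} and matching the available condition in each part (Gorenstein-ness of $R$ in (1) versus the assumed duality condition in (2)) against the two alternatives of the theorem. No single step is a serious obstacle; the one point requiring a little care is the dichotomy in part (2), where finite-projective-dimension modules must be treated separately so that the conclusion holds with no complexity hypothesis whatsoever.
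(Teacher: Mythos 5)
Your proposal follows the same route as the paper's own proof: take $S$ to be the simple singularity, use finite Cohen--Macaulay representation type to see that $\c(S)$ is polyhedral and that $\h(S)$ is finitely generated (the paper cites \cite[Proposition (13.10)]{Y} where you give the direct syzygy argument; both are fine), set $V=f(\c(S))$, and invoke Theorem \ref{11}(2) together with Remark \ref{46}(2) for part (1), and Remark \ref{46}(4) plus the finite/infinite projective dimension dichotomy for part (2). That dichotomy is exactly the paper's ``we may assume $\pd_RM=\infty$''.

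There is, however, one step that fails as written: the claim that ``the standing hypotheses of Theorem \ref{11} on $S$ are in force'' is not true when $\dim R\le1$. Theorem \ref{11} requires $R\subseteq S$ to be a finite extension of Cohen--Macaulay local \emph{domains}, and a simple singularity of dimension $0$ or $1$ need not be a domain: by the paper's convention the zero-dimensional ones are $k[\![x]\!]/(x^n)$, and one-dimensional ones such as the $(\D_n)$-singularity $k[\![x,y]\!]/(x^2y+y^{n-1})=k[\![x,y]\!]/(y(x^2+y^{n-3+1}))$ are reducible. Domain-ness of $S$ is genuinely used: the proof of Theorem \ref{11}(2) applies Proposition \ref{35}(2a) to the ring $S$, and the rank function on $\h(S)$ presupposes it. Only for $d\ge2$ does a simple singularity, having an isolated singularity, become normal and hence a domain. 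The paper therefore disposes of the low-dimensional case first: if $d\le1$, then $R$, being a normal local ring of dimension at most one, is regular, so every maximal Cohen--Macaulay module is free, $\c(R)=\rp[R]$ is itself polyhedral, and one may take $V=\c(R)$. Adding this reduction (and the observation that $S$ is a normal domain once $d\ge2$) makes your argument complete. Incidentally, Theorem \ref{11} also hypothesizes that $M$ be locally free in codimension one, which neither you nor the paper's proof of the corollary verifies; this is harmless, since the proof of Theorem \ref{11} never actually uses that hypothesis on $M$, but it deserves a word if one quotes the theorem verbatim.
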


\begin{proof}
Let $S$ be the simple singularity, and put $d=\dim R=\dim S$.
If $d\le1$, then $R$ is regular and $\c(R)=\rp[R]$ is itself polyhedral.
We may assume $d\ge2$, so that $S$ is a normal domain.
It follows from \cite[Proposition (13.10)]{Y} that $\g(S)$ is finitely generated, which particularly says $\dim_\R\h(S)_\R<\infty$.
There are only finitely many nonisomorphic indecomposable maximal Cohen--Macaulay $S$-modules by \cite[Corollary (12.6)]{Y}; let $X_1,\dots,X_t$ be those modules.
Then $\c(S)=\sum_{i=1}^t\rp[X_i]$, which is polyhedral.
Put $V=\sum_{i=1}^t\rp[X_i]\subseteq\c(R)$, which is polyhedral as well.

(1) The assertion follows from Theorem \ref{11}(2) and Remark \ref{46}(2).

(2) Fix an $R$-module $M$.
We may assume $\pd_RM=\infty$.
The assertion follows from Theorem \ref{11}(2) and Remark \ref{46}(4).
\end{proof}

\begin{ex}
Let $B=k[x,y,z,w]/(xw-yz)$ be a homogeneous algebra over a field $k$.
Let $S=k[\![x,y,z,w]\!]/(xw-yz)$ be the completion of $B$ (with respect to the irrelevant maximal ideal of $B$).
Then $S$ is a simple singularity of type $(\A_1)$ with dimension $3$.
Assume $\ch k\ne2$.
\begin{enumerate}[(1)]
\item
Let $A=B^{(2)}=k[x^2,xy,xz,xw,y^2,yw,z^2,zw,w^2]$ be the second Veronese subring of $B$, and let $R$ be its completion.
Then it holds that $A=B^G$, where $G=\left\langle\left(\begin{smallmatrix}-1&0&0&0\\0&-1&0&0\\0&0&-1&0\\0&0&0&-1\end{smallmatrix}\right)\right\rangle$.
Since $S$ is normal, so is $R$ by \cite[Proposition 6.4.1]{BH}.
The a-invariant of $B$ is $-2$.
It follows from \cite[Exercise 3.6.21(e)]{BH} that $A$ is Gorenstein, and so is $R$.
Corollary \ref{40}(1) yields a convex polyhedral subcone $V$ of $\c(R)$ such that for all $R$-modules $M$ with $\cx_RM\ge2$ there exist infinitely many integers $n\ge0$ with $[\syz_R^nM]\in V$.
\item
Consider the subring $A=k[x^2,xy,y^2,z,w]$ of $B$, and let $R$ be its completion.
We have $A=B^G$, where $G=\left\langle\left(\begin{smallmatrix}-1&0&0&0\\0&-1&0&0\\0&0&1&0\\0&0&0&1\end{smallmatrix}\right)\right\rangle$.
It follows from \cite[Proposition 6.4.1 and Corollary 6.4.6]{BH} that $A$ is normal and Cohen--Macaulay, and so is $R$.
We have $R\cong k[\![p,q,r,s,t]\!]/\I_2\left(\begin{smallmatrix}p&q&s\\q&r&t\end{smallmatrix}\right)$, where the denominator means the ideal generated by the $2$-minors of the matrix.
It is observed that $Q=(p,r-s,t)$ is a parameter ideal of $R$ with $\m^2=Q\m$, where $\m$ is the maximal ideal of $R$.
Thus $R$ has minimal multiplicity.
We have $S=R1\oplus(Rx+Ry)\cong R\oplus I$ and $I/(q)\cong R/\p$, where $I=(p,q)$ and $\p=(q,r,t)$ are ideals of $R$.
There is an exact sequence $0\to R\xrightarrow{f}I\to R/\p\to0$, where the map $f$ is defined by $f(1)=q$.
This induces an exact sequence $0\to I^\ast\to R\to\Ext_R^1(R/\p,R)$.
We have $\Ext_R^1(R/\p,R)\cong\Hom_{R/(q)}(R/\p,R/(q))\cong(0:_{R/(q)}\p/(q))=I/(q)\cong R/\p$.
We see $I^\ast\cong\p$.
Note that $R/\p\cong k[\![p,s]\!]$.
The depth lemma shows that $\p$ is a maximal Cohen--Macaulay $R$-module, and so is $I^\ast$, and so is $S^\ast$.
Corollary \ref{40}(2) provides a convex polyhedral subcone $V$ of $\c(R)$ satisfying the condition that for all $R$-modules $M$ there exist infinitely many $n\ge0$ such that $[\syz_R^nM]\in V$ holds\footnote{This is itself an immediate consequence of the fact that $R$ is one of the two known examples of a non-Gorenstein Cohen--Macaulay local ring of finite Cohen--Macaulay representation type (at least when $k$ is an algebraically closed field of characteristic zero); see \cite[Proposition (16.12)]{Y}.}.
\end{enumerate}
\end{ex}

\section{The convex cone $\c_r(R)$}

In this section, we introduce a convex cone spanned by maximal Cohen--Macaulay modules of fixed rank, and explore topological properties characterizing finiteness of the number of those maximal Cohen--Macaulay modules.
We begin with giving the precise definition of the convex cone.

\begin{dfn}
For each $r\in\ZP$, we denote by $\c_r(R)$ the convex cone spanned by maximal Cohen--Macaulay $R$-modules of rank $r$, that is, it is defined by the following.
$$\textstyle
\c_r(R)=\sum_{M\in\cm(R),\,\rk M=r}\rp[M]\subseteq\h(R)_\R.
$$
Note that there is an equality $\c(R)=\sum_{r\in\ZP}\c_r(R)$.
\end{dfn}

The same types of assertions as in Propositions \ref{15-0} and \ref{48} hold true for the convex cone $\c_r(R)$.

\begin{prop}\label{15}
Let $R$ be a domain and $r>0$ an integer.
Then the following hold.\\
{\rm(1)} $\c_r(R)\cap\k(R)_\R=\{0\}$.\qquad
{\rm(2)} $\c_r(R)\subseteq(\RP[R]+\k(R)_\R)\cup\{0\}$.\qquad
{\rm(3)} $\overline{\c_r(R)}\subseteq\rp[R]+\k(R)_\R$.\\
{\rm(4)} $\overline{\c_r(R)}$ is a convex cone.
\end{prop}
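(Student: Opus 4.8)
The plan is to transcribe the proof of Proposition \ref{15-0} essentially verbatim, observing that every generator of $\c_r(R)$ now carries the \emph{same} rank $r>0$, which only streamlines the argument. First I would record that $\c_r(R)$ is indeed a convex cone: any $\rp$-combination of $\rp$-combinations of classes of rank $r$ maximal Cohen--Macaulay modules is again such a combination, so the closure argument needed for (4) applies once the remaining parts are in place.

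For (1), I would take $x\in\c_r(R)\cap\k(R)_\R$ and write $x=\sum_{i=1}^u a_i[M_i]$ with $a_i\in\rp$ and each $M_i$ a maximal Cohen--Macaulay module of rank $r$. Applying the extended rank map gives $\rk_\R(x)=r\sum_{i=1}^u a_i$, and since $x\in\k(R)_\R=\ker(\rk_\R)$ this vanishes; as $r>0$ and each $a_i\ge0$, every $a_i=0$ and hence $x=0$. This is if anything cleaner than in Proposition \ref{15-0}, since the common factor $r$ can simply be pulled out of the sum.

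For (2), given $0\ne x=\sum_{i=1}^u a_i[M_i]\in\c_r(R)$, each exact sequence $0\to R^{\oplus r}\to M_i\to T_i\to0$ with $T_i$ torsion yields $[M_i]=r[R]+[T_i]$, so $x=r\bigl(\sum_{i=1}^u a_i\bigr)[R]+\sum_{i=1}^u a_i[T_i]$. Since $x\ne0$ forces $\sum_{i=1}^u a_i>0$, the coefficient $r\sum_{i=1}^u a_i$ lies in $\RP$ while $\sum_{i=1}^u a_i[T_i]\in\k(R)_\R$, placing $x$ in $\RP[R]+\k(R)_\R$. Assertion (3) then follows formally: one has $(\RP[R]+\k(R)_\R)\cup\{0\}\subseteq\rp[R]+\k(R)_\R$, the latter set is closed in $\h(R)_\R$, and passing to closures in (2) gives the containment. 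Finally (4) is the same limit argument as in Proposition \ref{15-0}(4): writing two points of $\overline{\c_r(R)}$ as limits of points of $\c_r(R)$ and using that $\c_r(R)$ is a convex cone, any $\rp$-combination of the limits is again a limit of points of $\c_r(R)$.

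There is no genuine obstacle here; the only point requiring a moment's care is the use of the strict positivity $r>0$ in (1) and the nonvanishing of $\sum_{i=1}^u a_i$ when $x\ne0$ in (2), exactly as in the corresponding steps for $\c(R)$. I would simply remark that the fixed-rank hypothesis turns these into routine positivity bookkeeping, slightly simpler than in the mixed-rank setting of Proposition \ref{15-0}.
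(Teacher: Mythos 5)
Your proof is correct and is in substance the same as the paper's: the paper proves Proposition \ref{15} simply by observing that $\c_r(R)\subseteq\c(R)$, so that (1)--(3) follow at once from the corresponding parts of Proposition \ref{15-0}, and then repeats the limit argument for (4), exactly as you do. Inlining the rank and torsion-decomposition argument specialized to rank $r$ is fine --- it is precisely the proof of Proposition \ref{15-0} --- though citing the containment $\c_r(R)\subseteq\c(R)$ is the quicker route for the first three assertions.
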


\begin{proof}
Assertions (1), (2) and (3) follow from assertions (1), (2) and (3) of Proposition \ref{15-0}, since $\c_r(R)$ is contained in $\c(R)$.
Assertion (4) is similarly shown to assertion (4) of Proposition \ref{15-0}.
\end{proof}

\begin{prop}\label{1p}
Let $R$ be a Cohen--Macaulay normal domain with a canonical module $\omega$.
Suppose that $\h(R)_\R$ is a finite-dimensional $\R$-vector space.
Then the following statements hold for each integer $r>0$.
\begin{enumerate}[\rm(1)]
\item
One has that $\overline{\c_r(R)}\cap\k(R)_\R$ is a subspace of the $\R$-vector space $\k(R)_\R$.
\item
There is an equality $\overline{\c_r(R)}\cap-\overline{\c_r(R)}=\overline{\c_r(R)}\cap\k(R)_\R$.
In particular, $\overline{\c_r(R)}$ is a strongly convex cone if and only if $\overline{\c_r(R)}\cap\k(R)_\R=0$.
\item
Let $x=x_0[R]+\sum_{i=1}^\zeta x_i[\Phi_i]$ and $y=x_0[\omega]-\sum_{i=1}^\zeta x_i[\Phi_i]$ be points in $\h(R)_\R$.\\
{\rm(a)} $x\in\c_r(R)$ if and only if $y\in\c_r(R)$.\qquad
{\rm(b)} $x\in\overline{\c_r(R)}$ if and only if $y\in\overline{\c_r(R)}$.
\item
If $R$ is Gorenstein, then both $\c_r(R)$ and $\overline{\c_r(R)}$ are symmetric with respect to the axis $\R[R]$.
\end{enumerate}
\end{prop}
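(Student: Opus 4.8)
The plan is to imitate the proof of Proposition \ref{48} almost verbatim, replacing $\c(R)$ by $\c_r(R)$ throughout. The only genuinely new input needed is the rank-graded refinement of Lemma \ref{62}(3b): that the homeomorphism $f := (\vv_\omega)_\R$ satisfies $f(\c_r(R)) = \c_r(R)$ and $f(\overline{\c_r(R)}) = \overline{\c_r(R)}$. To establish this I would first observe that if $M$ is a maximal Cohen--Macaulay $R$-module of rank $r$, then so is its canonical dual $M^\dag = \Hom_R(M,\omega)$: maximal Cohen--Macaulayness is preserved by $(-)^\dag$ over a Cohen--Macaulay ring with canonical module, and localizing at a height-one prime (where $R_\p$ is regular and $\omega_\p$ is free of rank one) shows $\rk M^\dag = r$. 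By Lemma \ref{62}(2) one has $\vv_\omega([M]) = [M^\dag]$, so $f$ sends the generators of $\c_r(R)$ to generators of $\c_r(R)$; being linear, $f(\c_r(R)) \subseteq \c_r(R)$, and applying the same to $M^\dag$ (using $(M^\dag)^\dag \cong M$, i.e. the relation $\vv_\omega^2 = \mathrm{id}$ on classes of maximal Cohen--Macaulay modules from Lemma \ref{62}(2)) gives the reverse inclusion. Passing to closures via the homeomorphism $f$ of Lemma \ref{62}(3a) yields $f(\overline{\c_r(R)}) = \overline{\c_r(R)}$.

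With this refinement in hand, each assertion follows exactly as in Proposition \ref{48}. For (1), since $f$ restricts to $-\mathrm{id}$ on $\k(R)_\R$ (because $\omega$ has rank one and is free in codimension one, so $\vv_\omega(t) = -t$ for $t \in \k(R)$), I would compute $f(\overline{\c_r(R)} \cap \k(R)_\R) = \overline{\c_r(R)} \cap \k(R)_\R$, which shows this set is closed under negation; combined with closure under addition and nonnegative scalars (Proposition \ref{15}(4) together with the subspace structure of $\k(R)_\R$), it is an $\R$-subspace. For (2), the subspace property of (1) gives the inclusion $\overline{\c_r(R)} \cap \k(R)_\R \subseteq \overline{\c_r(R)} \cap -\overline{\c_r(R)}$, while the reverse inclusion comes from Proposition \ref{15}(3), which confines $\overline{\c_r(R)} \cap -\overline{\c_r(R)}$ to $(\rp[R]+\k(R)_\R) \cap (\rmm[R]+\k(R)_\R) = \k(R)_\R$; the strong-convexity statement is then immediate from the definition $\overline{\c_r(R)} \cap -\overline{\c_r(R)} = \{0\}$.

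For (3), I would note that $f(x) = y$, using $\vv_\omega([R]) = [\omega]$ and $f([\Phi_i]) = -[\Phi_i]$; then $x \in \c_r(R) \iff y = f(x) \in f(\c_r(R)) = \c_r(R)$, and likewise for the closures. Finally (4) is the Gorenstein specialization $\omega = R$ of (3): here $[\omega] = [R]$, so the assignment $x \mapsto y$ is precisely the reflection of $\h(R)_\R \cong \R^{\oplus(\zeta+1)}$ across the coordinate axis $\R[R]$, and (3) says $\c_r(R)$ and $\overline{\c_r(R)}$ are invariant under it, i.e. symmetric about $\R[R]$.

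I expect no real obstacle: the entire argument is a transcription of Proposition \ref{48} once the rank-preserving property of $(-)^\dag$ is recorded. The one point deserving care is verifying that $(-)^\dag$ preserves both maximal Cohen--Macaulayness and the exact value of the rank, since it is this fact---rather than mere membership in $\c(R)$---that upgrades Lemma \ref{62}(3b) to the rank-graded statement $f(\c_r(R)) = \c_r(R)$ driving every part.
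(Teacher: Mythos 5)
Your proposal is correct and follows exactly the paper's own route: the paper's proof consists precisely of the remark that the argument of Proposition \ref{48} carries over once one notes that $M^\dag$ is again maximal Cohen--Macaulay of rank $r$, which is the rank-graded refinement you identify and verify. Your write-up simply makes explicit the details that the paper leaves implicit.
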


\begin{proof}
The arguments along the same lines as in the proof of Proposition \ref{48} work.
Note that if $M$ is a maximal Cohen--Macaulay $R$-module of rank $r$, then so is $M^\dag$.
\end{proof}

We say that an element $p$ of $\h(R)$ (resp. $\h(R)_\R)$ is a {\em  (rank $s$) maximal Cohen--Macaulay point} if there exists a (rank $s$) maximal Cohen--Macaulay $R$-module $M$ such that $p=[M]$ in $\h(R)$ (resp. $\h(R)_\R$).
Similarly, $p\in\Cl(R)$ (resp. $\Cl(R)_\R)$ is called a {\em maximal Cohen--Macaulay point} if there is a maximal Cohen--Macaulay reflexive $R$-module $M$ of rank one such that $p=M$ in $\Cl(R)$ (resp. $\Cl(R)_\R$).
We denote by $\c_r^0(R)$ the set of points $x\in\c_r(R)$ with $\rk_\R(x)=r$.
If $\dim_\R\h(R)_\R<\infty$, then $\c_r^0(R)$ is the set of points $x\in\c_r(R)$ having the form $r[R]+\sum_{i=1}^\zeta c_i[\Phi_i]$ with $c_i\in\R$.
The theorem below is the main result of this section.

\begin{thm}\label{1}
Let $R$ be a domain with $\dim_\R\h(R)_\R<\infty$.
Let $r\in\ZP$.
Consider the conditions below.\\
\qquad
{\rm(a)} There are only finitely many nonisomorphic maximal Cohen--Macaulay $R$-modules of rank $r$.\\
\qquad
{\rm(b)} There are only finitely many rank $r$ maximal Cohen--Macaulay points in $\c_r(R)$.\\
\qquad
{\rm(c)} The convex cone $\c_r(R)$ is polyhedral.\qquad
{\rm(d)} The subset $\c_r(R)$ of $\h(R)_\R$ is closed.\\
\qquad
{\rm(e)} The equality $\overline{\c_r(R)}\cap\k(R)_\R=0$ holds.\qquad
{\rm(f)} The subset $\c_r^0(R)$ of $\h(R)_\R$ is bounded.\\
\qquad
{\rm(g)} The convex cone $\overline{\c_r(R)}$ is strongly convex.\\
Then the implications {\rm(a)} $\Rightarrow$ {\rm(b)} $\Rightarrow$ {\rm(c)} $\Rightarrow$ {\rm(d)} $\Rightarrow$ {\rm(e)} $\Leftrightarrow$ {\rm(f)} $\Rightarrow$ {\rm(g)} hold.
If $\h(R)$ is finitely generated, then {\rm(b)}--{\rm(f)} are equivalent.
If $R$ is normal, $\h(R)$ is finitely generated and $r=1$, then {\rm(a)}--{\rm(f)} are equivalent.
If $R$ is a normal Cohen--Macaulay ring with a canonical module, then {\rm(e)} and {\rm(g)} are equivalent.
\end{thm}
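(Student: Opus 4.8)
The plan is to establish the long chain (a)$\Rightarrow$(b)$\Rightarrow$(c)$\Rightarrow$(d)$\Rightarrow$(e) together with (e)$\Rightarrow$(g) by a sequence of largely formal steps, then isolate the analytic equivalence (e)$\Leftrightarrow$(f) as the core, and finally close the various loops under the extra hypotheses. The implication (a)$\Rightarrow$(b) is immediate, since isomorphic modules determine the same class in $\h(R)$, so finitely many isomorphism classes yield finitely many points. For (b)$\Rightarrow$(c), I would observe that $\c_r(R)$ is the sum of the rays $\rp[M]$ over rank $r$ maximal Cohen--Macaulay modules $M$; if the set of points $[M]$ is finite, $\c_r(R)$ is the cone on finitely many vectors, hence polyhedral. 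Then (c)$\Rightarrow$(d) is the standard fact that a finitely generated convex cone is closed, and (d)$\Rightarrow$(e) follows because closedness gives $\overline{\c_r(R)}=\c_r(R)$, whereupon Proposition \ref{15}(1) yields $\overline{\c_r(R)}\cap\k(R)_\R=\c_r(R)\cap\k(R)_\R=\{0\}$. For (e)$\Rightarrow$(g), I would use Proposition \ref{15}(3): since $\overline{\c_r(R)}\subseteq\rp[R]+\k(R)_\R$ and $-\overline{\c_r(R)}\subseteq\rmm[R]+\k(R)_\R$, their intersection lies in $\k(R)_\R$, hence in $\overline{\c_r(R)}\cap\k(R)_\R=\{0\}$.

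The heart of the argument is (e)$\Leftrightarrow$(f). The key observation is that every point of $\c_r(R)$ is a nonnegative multiple of a point of $\c_r^0(R)$: a point $\sum a_i[M_i]$ has rank $r\sum a_i$, so $\c_r^0(R)$ is the slice where $\sum a_i=1$, giving $\c_r(R)=\rp\cdot\c_r^0(R)$. For (f)$\Rightarrow$(e): if $x\in\overline{\c_r(R)}\cap\k(R)_\R$ is written $x=\lim t_ny_n$ with $y_n\in\c_r^0(R)$ and $t_n\ge0$, then $\rk_\R(x)=0$ forces $rt_n\to0$, and boundedness of $\{y_n\}$ forces $t_ny_n\to0$, so $x=0$. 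For the contrapositive, assuming $\c_r^0(R)$ is unbounded, I would pick $y_n\in\c_r^0(R)$ with $\|y_n\|\to\infty$; by compactness of the unit sphere (this is where $\dim_\R\h(R)_\R<\infty$ is used) a subsequence of the unit vectors $y_n/\|y_n\|\in\c_r(R)$ converges to some unit vector $v\in\overline{\c_r(R)}$, and since $\rk_\R(y_n/\|y_n\|)=r/\|y_n\|\to0$ one gets $v\in\k(R)_\R\setminus\{0\}$, violating (e).

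It then remains to close the loops under the extra hypotheses. When $\h(R)$ is finitely generated I would prove (f)$\Rightarrow$(b), completing the equivalence of (b)--(f): by Proposition \ref{16} distinct module points are at distance at least $1$, so the rank $r$ maximal Cohen--Macaulay points form a $1$-separated subset of the bounded set $\c_r^0(R)$, and a bounded region of the finite-dimensional space $\h(R)_\R$ contains only finitely many such points. When $R$ is normal and $r=1$, I would upgrade (b)$\Leftrightarrow$(a) via the isomorphism $\binom{\rk}{\cl}:\h(R)\xrightarrow{\sim}\Z\oplus\Cl(R)$: a rank one maximal Cohen--Macaulay module over a normal domain is a reflexive ideal determined up to isomorphism by its class in $\Cl(R)$, so the map sending an isomorphism class to its point in $\h(R)$ is injective, and finiteness of the set of points is equivalent to finiteness of the set of classes. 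Finally, when $R$ is a normal Cohen--Macaulay ring with a canonical module, (e)$\Leftrightarrow$(g) is immediate from Proposition \ref{1p}(2), which identifies $\overline{\c_r(R)}\cap-\overline{\c_r(R)}$ with $\overline{\c_r(R)}\cap\k(R)_\R$.

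The main obstacle I anticipate is the equivalence (e)$\Leftrightarrow$(f): it is the only step requiring genuine analysis, a compactness and rescaling argument rather than a formal manipulation, and getting the contrapositive right---extracting a nonzero numerically trivial limiting direction from an unbounded slice---is the delicate point. The separation estimate of Proposition \ref{16}, which converts boundedness into finiteness in the (f)$\Rightarrow$(b) step, is the other essential ingredient, and is exactly where finite generation of $\h(R)$ enters the picture.
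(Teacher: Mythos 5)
Your route coincides with the paper's in every structural respect: the formal chain (a)$\Rightarrow$(b)$\Rightarrow$(c)$\Rightarrow$(d)$\Rightarrow$(e), the rescaling-plus-compactness argument for (e)$\Leftrightarrow$(f) (the paper proves both directions in contrapositive form, you prove (f)$\Rightarrow$(e) directly, but the identity $\c_r(R)=\rp\cdot\c_r^0(R)$ and the unit-sphere compactness are the same ideas), the derivation of (g) via Proposition \ref{15}(3), the use of the $1$-separation Proposition \ref{16} for (f)$\Rightarrow$(b) when $\h(R)$ is finitely generated, and the citation of Proposition \ref{1p}(2) for the last assertion; your cycle (b)$\Rightarrow\cdots\Rightarrow$(f)$\Rightarrow$(b) is even slightly leaner than the paper, which adds a logically redundant standalone proof of (e)$\Rightarrow$(b).

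The one step with a genuine gap is (b)$\Rightarrow$(a). Condition (b) counts points of $\c_1(R)$, which live in the real vector space $\h(R)_\R$, whereas your injectivity statement (isomorphism classes of rank-one maximal Cohen--Macaulay modules inject into $\Cl(R)$, hence into $\h(R)\cong\Z\oplus\Cl(R)$) lands in $\h(R)$. The canonical map $\h(R)\to\h(R)_\R$ is not injective: it kills the torsion of $\Cl(R)$. In particular, if $I$ is a reflexive ideal whose class is torsion in $\Cl(R)$, then $[I]=[R]$ in $\h(R)_\R$, so infinitely many pairwise nonisomorphic torsion-class maximal Cohen--Macaulay ideals would all give a \emph{single} point of $\c_1(R)$; thus "finitely many points" does not by itself yield "finitely many classes." This is exactly why the theorem assumes $\h(R)$ finitely generated for this implication --- a hypothesis your sketch of this step never actually invokes, which is the symptom of the gap. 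The repair is short: finite generation makes the kernel of $\h(R)\to\h(R)_\R$ (equivalently, the torsion of $\Cl(R)$) finite, so each of the finitely many points of $\c_1^0(R)$ has only finitely many module-class preimages in $\Cl(R)$. This is precisely what the paper does explicitly, showing that the maximal Cohen--Macaulay classes, viewed in $\Cl(R)\cong\Z^{\oplus\zeta}\oplus\bigoplus_{j=1}^\eta\Z/\theta_j\Z$, lie in the finite set of all torsion lifts of the finitely many free parts $(a_{k1},\dots,a_{k\zeta})$.
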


\begin{proof}
The last assertion is a repetition of the second assertion of Proposition \ref{1p}(2).

It is evident that the implications (a) $\Rightarrow$ (b) $\Rightarrow$ (c) hold.
Every convex polyhedral cone is a closed subset of the ambient Euclidean space (essentially by Carath\'{e}odory's theorem).
This shows that (c) implies (d).
It follows from Proposition \ref{15}(1) that (d) implies (e).

Assume $\c_r^0(R)$ is unbounded.
Choose a sequence $\{\tau_i\}_{i\in\NN}$ in $\k(R)_\R$ such that $r[R]+\tau_i\in\c_r^0(R)$ and $\delta_i=\|\tau_i\|>i$ for each $i\in\NN$.
Then $\delta_i^{-1}\tau_i$ is in the unit sphere $S=\{v\in\k(R)_\R\mid\|v\|=1\}$ in $\k(R)_\R$.
As $S$ is a bounded closed subset of $\k(R)_\R\cong\R^{\oplus\zeta}$, it is compact.
Since $\R^{\oplus\zeta}$ is a complete metric space, $S$ is a sequentially compact subset of $\k(R)_\R$.
We find a subsequence of $\{\delta_i^{-1}\tau_i\}_{i\in\NN}$ converging to a point $\varsigma\in S$.
Changing indices, we may assume $\lim_{i\to\infty}\delta_i^{-1}\tau_i=\varsigma$.
Then $\lim_{i\to\infty}\delta_i^{-1}(r[R]+\tau_i)=\varsigma\in S\subseteq\k(R)_\R$.
As $\delta_i(r[R]+\tau_i)$ belongs to $\c_r(R)$, we get $0\ne\varsigma\in\overline{\c_r(R)}\cap\k(R)_\R$.
We have shown that (e) implies (f).

Let $0\ne x\in\overline{\c_r(R)}\cap\k(R)_\R$.
We have $x=\lim_{n\to\infty}x_n$ for some sequence $\{x_n\}_{n\in\NN}$ in $\c_r(R)$ such that $x_n\ne0$ for infinitely many $n$.
Changing indices, we may assume $x_n\ne0$ for all $n$.
Write $x=\sum_{i=1}^\zeta x_i[\Phi_i]$ and $x_n=x_{n0}[R]+\sum_{i=1}^\zeta x_{ni}[\Phi_i]$.
As $x_n\in\c_r(R)$, we see that $x_{n0}=\rk_\R(x_n)>0$ (see Proposition \ref{15}(1)).
The point $rx_{n0}^{-1}x_n$ belongs to $\c_r^0(R)$.
We have $\|rx_{n0}^{-1}x_n\|^2=r^2x_{n0}^{-2}\|x_n\|^2=r^2x_{n0}^{-2}(x_{n0}^2+x_{n1}^2+\cdots+x_{n\zeta}^2)=r^2(1+x_{n0}^{-2}(x_{n1}^2+\cdots+x_{n\zeta}^2))$.
Since $\lim_{n\to\infty}x_{n0}=0$ and $\lim_{n\to\infty}x_{ni}=x_i$ for each $1\le i\le\zeta$, we see that $\lim_{n\to\infty}\|rx_{n0}^{-1}x_n\|=\infty$.
Therefore $\c_r^0(R)$ is unbounded.
Thus we have proved that (f) implies (e).

Proposition \ref{15}(3) gives $\overline{\c_r(R)}\cap-\overline{\c_r(R)}\subseteq(\rp[R]+\k(R)_\R)\cap(\rmm[R]+\k(R)_\R)=\k(R)_\R$.
Hence $\overline{\c_r(R)}\cap-\overline{\c_r(R)}\subseteq\overline{\c_r(R)}\cap\k(R)_\R$.
Since we have shown (f) implies (e), we observe (f) implies (g).

Suppose that $\c_r^0(R)$ is bounded and $\h(R)$ is finitely generated.
It is seen from Proposition \ref{16} that there exist only finitely many maximal Cohen--Macaulay points in $\c_r^0(R)$, say $[M_1],\dots,[M_u]$.
As any rank $r$ maximal Cohen--Macaulay point in $\c_r(R)$ appears in $\c_r^0(R)$, the rank $r$ maximal Cohen--Macaulay points in $\c_r(R)$ are $[M_1],\dots,[M_u]$.
We have shown (f) implies (b), assuming $\h(R)$ is finitely generated.

Assume $R$ is normal and $[K_1],\dots,[K_t]$ are the rank $1$ maximal Cohen--Macaulay points in $\c_1(R)$, where each $K_i$ is a reflexive ideal of $R$.
Let $L$ be a nonzero reflexive ideal of $R$ which is a maximal Cohen--Macaulay $R$-module.
Then $[L]=[K_l]$ in $\h(R)_\R$ for some $1\le l\le t$, which implies $[R/L]=[R/K_l]$ in $\k(R)_\R$.
Sending this by $\cl_\R$, we get $L=K_l$ in $\Cl(R)_\R$.
Thus $K_1,\dots,K_t$ are the maximal Cohen--Macaulay points in $\Cl(R)_\R$.
Suppose that $\h(R)$ is finitely generated, so that $\h(R)=\Z[R]+\sum_{i=1}^\zeta\Z[\Phi_i]+\sum_{j=1}^\eta\Z[\Psi_j]$ (by Setup \ref{20}(1)).
For each $1\le k\le t$, choose integers $a_{k1},\dots,a_{k\zeta},b_{k1},\dots,b_{k\eta}$ such that $[R/K_k]=\sum_{i=1}^\zeta a_{ki}[\Phi_i]+\sum_{j=1}^\eta b_{kj}[\Psi_j]$ in $\k(R)$.
The images of the maximal Cohen--Macaulay points in $\Cl(R)$ by the isomorphism $\Cl(R)\to\Z^{\oplus\zeta}\oplus(\bigoplus_{j=1}^\eta\Z/\theta_j\Z)$ are contained in the finite set $\{-(a_{k1},\dots,a_{k\zeta},\overline{c_1}\dots,\overline{c_s})\mid1\le k\le t,\,0\le c_j\le \theta_j-1\}$.
Hence there exist only finitely many maximal Cohen--Macaulay points in $\Cl(R)$.
We have proved that (b) implies (a), assuming $R$ is normal, $\h(R)$ is finitely generated and $r=1$.

It remains to prove that (e) $\Rightarrow$ (b) under the assumption that $\h(R)$ is finitely generated.
Suppose that there are infinitely many maximal Cohen--Macaulay points $[K_1],[K_2],[K_3],\dots$ in $\c_r(R)$.
For each $k\in\Z_{>0}$ there exist $a_{ki},b_{kj}\in\Z$ such that $[K_k]=r[R]+\sum_{i=1}^\zeta a_{ki}[\Phi_i]+\sum_{j=1}^\eta b_{kj}[\Psi_j]$ in $\h(R)$.
If the $a_{ki}$ were zero, then $[K_k]$ would belong to $r[R]+\sum_{j=1}^\eta\Z[\Psi_j]$, which is a finite set since $\theta_j[\Psi_j]=0$.
Hence for infinitely many $k$ there is $i$ with $a_{ki}\ne0$.
Changing indices, we may assume that for all $k$ there is $i$ with $a_{ki}\ne0$.
We have $z_k:=[K_k]-r[R]=\sum_{i=1}^\zeta a_{ki}[\Phi_i]$ in $\h(R)_\R$.
Note that $c_k:=\|z_k\|=\sqrt{\sum_{i=1}^\zeta a_{ki}^2}>0$ for all $k$.
The point $c_k^{-1}z_k$ belongs to the unit sphere $S=\{v\in\k(R)_\R\mid\|v\|=1\}$ in $\k(R)_\R$.
Again, $S$ is a sequentially compact subset of $\k(R)_\R$, and we find a subsequence of $\{c_k^{-1}z_k\}_{k=1}^\infty$ that converges to a point in $S$.
Changing indices, we may assume $\lim_{k\to\infty}c_k^{-1}z_k=:\sigma\in S$.

Suppose that the sequence $\{c_k\}_{k=1}^\infty$ of positive real numbers is unbounded.
Then it contains a subsequence $\{c_{k_l}\}_{l=1}^\infty$ such that $\lim_{l\to\infty}c_{k_l}=\infty$.
Since the subsequence $\{c_{k_l}^{-1}z_{k_l}\}_{l=1}^\infty$ of the sequence $\{c_k^{-1}z_k\}_{k=1}^\infty$ converges to $\sigma$ as well, we have $\lim_{l\to\infty}c_{k_l}^{-1}([K_{k_l}]-r[R])=\sigma$, which implies
\begin{equation}\label{50}
\textstyle\lim_{l\to\infty}c_{k_l}^{-1}[K_{k_l}]=\sigma.
\end{equation}
Since $c_{k_l}^{-1}[K_{k_l}]$ belongs to $\c_r(R)$, the point $\sigma$ belongs to its closure $\overline{\c_r(R)}$.
We get $\sigma\in\overline{\c_r(R)}\cap\k(R)_\R=0$, which contradicts the fact that $\|\sigma\|=1$.
Therefore, the sequence $\{c_k\}_{k=1}^\infty$ is bounded.

Via the isomorphism $\k(R)_\R\cong\R^{\oplus\zeta}$, the point $z_k=\sum_{i=1}^\zeta a_{ki}[\Phi_i]\in\k(R)_\R$ corresponds to the lattice point $(a_{k1},\dots,a_{k\zeta})\in\R^{\oplus\zeta}$.
As $\{c_k\}_{k=1}^\infty$ is bounded, we see that the set $\{(a_{k1},\dots,a_{k\zeta})\}_{k=1}^\infty$ is finite, and so is the set $\{(b_{k1},\dots,b_{k\eta})\}_{k=1}^\infty$ since the subgroup $\sum_{j=1}^\eta\Z[\Psi_j]$ of $\k(R)$ is finite.
Consequently, there are only finitely many choices of the $[K_k]$ as elements of $\h(R)$.
This contradiction finishes the proof.
\end{proof}

\begin{rem}
Comparing Propositions \ref{15-0}, \ref{48}, \ref{15} and \ref{1p}, the reader may wonder if (b)--(f) in Theorem \ref{1} are equivalent when $\h(R)$ is finitely generated, even if $\c_r(R)$ is replaced with $\c(R)$.
However, we do not know whether it does.
In fact, under this replacement, we do not know how to deduce \eqref{50}.
\end{rem}

We record a consequence of Theorem \ref{1}.

\begin{cor}
\begin{enumerate}[\rm(1)]
\item
Let $R$ be a Cohen--Macaulay normal domain having a canonical module such that $\h(R)$ is finitely generated and $\dim_\R\k(R)_\R\le1$.
Suppose that there exist infinitely many nonisomorphic maximal Cohen--Macaulay $R$-modules of rank one.
Then $\overline{\c_1(R)}=\rp[R]+\k(R)_\R$.
\item
Let $E$ be an elliptic curve over $\Q$ of positive rank, and let $R$ be the local ring at the vertex of the cone over $E$.
Then $\overline{\c_1(R)}\cap\k(R)_\R$ is an $\R$-vector space of positive dimension.
\end{enumerate}
\end{cor}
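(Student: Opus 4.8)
The plan is to derive both statements from Theorem~\ref{1} together with Proposition~\ref{1p}(1), the only genuinely new ingredient being a class group computation in part~(2).

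For part~(1), I would first pin down the dimension of $\k(R)_\R$. If $\dim_\R\k(R)_\R=0$, then $\k(R)$ is a finitely generated torsion group, hence finite; since $R$ is normal, the isomorphism $\k(R)\cong\Cl(R)$ forces $\Cl(R)$ to be finite, so there are only finitely many reflexive ideals of rank one and in particular only finitely many rank one maximal Cohen--Macaulay modules, contradicting the hypothesis. Thus $\dim_\R\k(R)_\R=1$. Now $R$ is normal with $\h(R)$ finitely generated and $r=1$, so the equivalence (a)$\Leftrightarrow$(e) of Theorem~\ref{1} applies; since (a) fails we obtain $\overline{\c_1(R)}\cap\k(R)_\R\ne0$. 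By Proposition~\ref{1p}(1) this intersection is a subspace of the one-dimensional space $\k(R)_\R$, hence equals $\k(R)_\R$. Finally $\overline{\c_1(R)}$ is a convex cone (Proposition~\ref{15}(4)) containing both $[R]$ (as $R$ is a rank one maximal Cohen--Macaulay module) and all of $\k(R)_\R$, so it contains $a[R]+v$ for every $a\in\rp$ and $v\in\k(R)_\R$; that is, $\rp[R]+\k(R)_\R\subseteq\overline{\c_1(R)}$. The reverse inclusion is Proposition~\ref{15}(3), giving the asserted equality.

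For part~(2), write $R$ for the local ring at the vertex of the cone over $E$, embedded by a projectively normal complete linear system with associated line bundle $\mathcal L$ of degree $d\ge3$. Then $R$ is a two-dimensional normal domain, hence Cohen--Macaulay (normal of dimension two implies $S_2$), and it has a canonical module since it is a homomorphic image of a regular local ring. The crucial point is the identification $\Cl(R)\cong\operatorname{Pic}(E)/\Z[\mathcal L]$ for the class group of such a cone. Using $\operatorname{Pic}(E)\cong\Z\oplus E(\Q)$ (degree together with $\operatorname{Pic}^0$, the latter identified with the Mordell--Weil group $E(\Q)$ via the rational origin) and the fact that $[\mathcal L]$ has nonzero degree, quotienting by the single non-torsion element $[\mathcal L]$ drops the rank by exactly one, so $\Cl(R)$ has rank equal to the rank of $E(\Q)$, which is positive by hypothesis. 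In particular $\Cl(R)\cong\k(R)$ is finitely generated and infinite with $\dim_\R\k(R)_\R<\infty$. Since $R$ has dimension two, every reflexive ideal of rank one is maximal Cohen--Macaulay, so the infinitude of $\Cl(R)$ yields infinitely many nonisomorphic rank one maximal Cohen--Macaulay $R$-modules. Theorem~\ref{1} then gives $\overline{\c_1(R)}\cap\k(R)_\R\ne0$, and Proposition~\ref{1p}(1) shows this set is an $\R$-subspace of $\k(R)_\R$; being nonzero, it has positive dimension.

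The main obstacle is the class group computation in part~(2): establishing $\Cl(R)\cong\operatorname{Pic}(E)/\Z[\mathcal L]$ for the local ring at the vertex (not merely for the graded cone) and checking that the resulting rank is positive. I would handle the first via the standard theory of class groups of cones over projectively normal varieties, citing an appropriate reference and noting that passing from the cone to its local ring at the vertex leaves the class group unchanged, and the second by the elementary rank count above. Everything else is a formal consequence of results already established in the paper.
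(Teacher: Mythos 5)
Your proof is correct and follows essentially the same route as the paper's: both parts are obtained by combining the equivalence (a) $\Leftrightarrow$ (e) of Theorem \ref{1} with Proposition \ref{1p}(1) (nonzero intersection $\Rightarrow$ it is a subspace of $\k(R)_\R$) and, for the equality in part (1), the inclusions from Proposition \ref{15}. The only divergence is in part (2), where the paper simply cites \cite[Example 4.2 and the preceding paragraph]{DK} for exactly the facts you derive by hand --- that the vertex local ring is a two-dimensional Cohen--Macaulay normal domain with canonical module whose class group $\Cl(R)\cong\operatorname{Pic}(E)/\Z[\mathcal{L}]$ is finitely generated of rank equal to $\operatorname{rank}E(\Q)>0$, so its infinitely many rank-one reflexive ideals are all maximal Cohen--Macaulay in dimension two --- and your computation of these facts is sound.
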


\begin{proof}
(1) Proposition \ref{1p}(1) and the implication (e) $\Rightarrow$ (a) in Theorem \ref{1} infer that $\overline{\c_1(R)}\cap\k(R)_\R$ is a nonzero subspace of the $\R$-vector space $\k(R)_\R$.
Since $\dim_\R\k(R)_\R\le1$, we have $\overline{\c_1(R)}\cap\k(R)_\R=\k(R)_\R$.
Hence $\overline{\c_1(R)}$ contains $\k(R)_\R$.
By definition, $\c_1(R)$ contains $\rp[R]$.
Thus the inclusion $\overline{\c_1(R)}\supseteq\rp[R]+\k(R)_\R$ holds.
The opposite inclusion follows from Proposition \ref{15}(3).

(2) The assertion is a consequence of the combination of Proposition \ref{1p}(1), Theorem \ref{1} and \cite[Example 4.2 and the preceding paragraph]{DK}.
\end{proof}

We establish a lemma to show our next result.

\begin{lem}\label{30}
Let $R$ be a local ring of dimension at most three such that $\h(R)_\R$ has finite dimension.
Let $L$ be an $R$-module of finite length and finite projective dimension, and put $X=\{x\in\h(R)_\R\mid\chi(L,x)\ge0\}$.
Then $X$ is a closed subset of $\h(R)_\R$, and contains $\overline{\c_r(R)}$ for each integer $r>0$.
\end{lem}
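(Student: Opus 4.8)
The plan is to exploit the $\R$-linear map $\chi(L,-)\colon\h(R)_\R\to\R$ furnished by Lemma \ref{30-0}(2), whose existence uses precisely the hypothesis $\dim R\le3$. Since any $\R$-linear map between finite-dimensional real vector spaces is continuous and $\rp$ is closed in $\R$, the set $X=\chi(L,-)^{-1}(\rp)$ is immediately closed in $\h(R)_\R$; this disposes of the first assertion. For the inclusion $\overline{\c_r(R)}\subseteq X$, it then suffices to prove $\c_r(R)\subseteq X$, because $X$ being closed forces it to contain $\overline{\c_r(R)}$ once it contains $\c_r(R)$. As $\c_r(R)=\sum_M\rp[M]$ with $M$ running over maximal Cohen--Macaulay modules of rank $r$ and $\chi(L,-)$ is linear, the containment $\c_r(R)\subseteq X$ reduces to the single inequality $\chi(L,M)\ge0$ for every maximal Cohen--Macaulay $R$-module $M$.

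The heart of the argument is thus this nonnegativity, which I would establish by showing that the higher Tor modules vanish. First I would discard the trivial case $L=0$, where $\chi(L,-)\equiv0$ and $X=\h(R)_\R$. So assume $L\ne0$; then $\depth_RL=0$, and the Auslander--Buchsbaum formula gives $\pd_RL=\depth R\le\dim R=:d$. Writing $p=\pd_RL$ and taking a minimal free resolution $0\to F_p\to\cdots\to F_0\to L\to0$, I would tensor with $M$ to obtain the complex $C_\bullet\colon 0\to F_p\otimes_RM\to\cdots\to F_0\otimes_RM$, whose $i$th term is $M^{\oplus\beta_i^R(L)}$ and whose homology in positive degree $i$ is $\Tor_i^R(L,M)$. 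Since $M$ is maximal Cohen--Macaulay we have $\depth(F_i\otimes_RM)=\depth M=d$, and as $i\le p\le d$ this gives $\depth(F_i\otimes_RM)\ge i$; meanwhile each nonzero $\Tor_i^R(L,M)$ is supported only at $\m$ (because $\supp L=\{\m\}$), hence has finite length and therefore depth $0$. The Peskine--Szpiro acyclicity lemma (see \cite{BH}) now applies and yields $\Tor_i^R(L,M)=0$ for $1\le i\le p$; combined with $\Tor_i^R(L,M)=0$ for $i>p$, this gives $\chi(L,M)=\ell_R(L\otimes_RM)\ge0$.

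Finally I would assemble the pieces: for $x=\sum_ia_i[M_i]\in\c_r(R)$ with $a_i\ge0$ and each $M_i$ maximal Cohen--Macaulay of rank $r$, linearity gives $\chi(L,x)=\sum_ia_i\chi(L,M_i)\ge0$, so $x\in X$; hence $\c_r(R)\subseteq X$, and by closedness of $X$ we conclude $\overline{\c_r(R)}\subseteq X$. The main obstacle is the nonnegativity $\chi(L,M)\ge0$: everything turns on the depth inequalities $\depth(F_i\otimes_RM)\ge i$ that feed the acyclicity lemma, and the clean point is that maximal Cohen--Macaulayness forces $\depth M=d$ while Auslander--Buchsbaum keeps the resolution length $p=\pd_RL$ within $d$, so these inequalities hold automatically. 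The remaining steps---continuity of a linear map and the reduction to a single module via the cone structure---are routine.
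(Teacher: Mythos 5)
Your proof is correct and follows essentially the same route as the paper's: closedness comes from the continuity of the linear map $\chi(L,-)$ furnished by Lemma \ref{30-0}(2), the inclusion reduces to $\c_r(R)\subseteq X$ and then, by linearity, to the vanishing $\Tor_{>0}^R(L,M)=0$ for a maximal Cohen--Macaulay module $M$, which yields $\chi(L,M)=\ell_R(L\otimes_RM)\ge0$. The only difference is at that last step: the paper simply cites \cite[Page 140, Lemma 2(iii)]{M} (Tor vanishes when $\depth M\ge\pd L$), whereas you reprove this fact from the Auslander--Buchsbaum formula and the Peskine--Szpiro acyclicity lemma --- a valid, more self-contained substitute for the same key input.
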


\begin{proof}
The first assertion is a direct consequence of Lemma \ref{30-0}(2); recall that a linear map of finite-dimensional $\R$-vector spaces is continuous.
We prove the second assertion.
Thanks to the first assertion, it suffices to check that $X$ contains $\c_r(R)$.
Let $z\in\c_r(R)$.
Then $z=\sum_{i=1}^na_i[M_i]$ for some $a_i\in\rp$ and $M_i\in\cm(R)$ with $\rk M_i=r$.
It holds that $\chi(L,z)=\sum_{i=1}^na_i\cdot\chi(L,M_i)$.
As $\depth M_i=\dim R$, we see from \cite[Page 140, Lemma 2(iii)]{M} that $\Tor_{>0}^R(L,M_i)=0$.
Hence $\chi(L,M_i)=\ell_R(L\otimes_RM_i)\ge0$ for each $i$, which shows $\chi(L,z)\ge0$.
Therefore $z$ belongs to $X$ as desired.
\end{proof}

As an application of Theorem \ref{1}, we obtain the following result.

\begin{cor}\label{25}
Let $R$ be a homomorphic image of a Gorenstein local ring, and suppose that $R$ is a normal ring with $\dim R\le3$ such that $\Cl(R)$ is finitely generated and of rank $1$.
Assume that there exist an $R$-module $L$ of finite length and finite projective dimension and a torsion $R$-module $T$ with $\chi(L,T)\ne0$.
Then there exist only finitely many maximal Cohen--Macaulay $R$-modules of rank $1$ up to isomorphism.
\end{cor}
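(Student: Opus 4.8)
The plan is to apply Theorem \ref{1} with $r=1$. Since $R$ is a normal (local) domain and $\Cl(R)$ is finitely generated, the group $\h(R)\cong\Z\oplus\Cl(R)$ is finitely generated with $\k(R)\cong\Cl(R)$, so that $\dim_\R\k(R)_\R=\rk\Cl(R)=1$ and $\h(R)_\R$ is two-dimensional. Thus the equivalences among {\rm(a)}--{\rm(f)} in Theorem \ref{1} are available, and it suffices to verify condition {\rm(e)}, namely $\overline{\c_1(R)}\cap\k(R)_\R=0$. Before doing so I would first record that $R$ is in fact Cohen--Macaulay and admits a canonical module: the hypothesis furnishes a nonzero $R$-module $L$ of finite length and finite projective dimension, whence $\depth_RL=0$ and the Auslander--Buchsbaum formula gives $\pd_RL=\depth R$, while the New Intersection Theorem applied to the (finite, nonzero, finite-length) minimal free resolution of $L$ yields $\pd_RL\ge\dim R$; combining these forces $\depth R=\dim R$. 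As a Cohen--Macaulay homomorphic image of a Gorenstein local ring, $R$ then has a canonical module $\omega$. This is the step that makes the symmetry machinery of Section 3 applicable.

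Next I would turn the hypothesis on $\chi$ into a linear constraint. Because $\dim R\le3$, Lemma \ref{30-0}(2) lets $\chi(L,-)$ descend to an $\R$-linear functional $\phi:=\chi(L,-)\colon\h(R)_\R\to\R$, and Lemma \ref{30} shows $\overline{\c_1(R)}\subseteq X:=\{x\in\h(R)_\R\mid\phi(x)\ge0\}$. The torsion module $T$ has rank zero, so $[T]\in\k(R)_\R$, and by hypothesis $\phi([T])=\chi(L,T)\ne0$; hence the restriction $\phi|_{\k(R)_\R}$ is a nonzero functional on the one-dimensional space $\k(R)_\R$. Consequently $X\cap\k(R)_\R$ is a closed half-line (a ray through the origin), and in particular $\overline{\c_1(R)}\cap\k(R)_\R$ is contained in this ray.

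The final step is to play this one-sided bound against symmetry. By Proposition \ref{1p}(1), applicable now that $R$ is Cohen--Macaulay normal with canonical module $\omega$, the set $\overline{\c_1(R)}\cap\k(R)_\R$ is a linear subspace of $\k(R)_\R$. A subspace contained in a ray must be $\{0\}$, so condition {\rm(e)} holds, and Theorem \ref{1} (the implication {\rm(e)} $\Rightarrow$ {\rm(a)}, valid since $R$ is normal, $\h(R)$ is finitely generated, and $r=1$) gives that there are only finitely many nonisomorphic maximal Cohen--Macaulay $R$-modules of rank one. I expect the main obstacle to be not any single computation but the recognition that the single functional $\phi$ alone yields only a half-line: without the reflection $\vv_\omega$ (equivalently, without knowing that $R$ is Cohen--Macaulay with a canonical module) one cannot rule out a nontrivial ray in $\overline{\c_1(R)}\cap\k(R)_\R$. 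Hence deducing Cohen--Macaulayness from the mere existence of $L$, and only then invoking Proposition \ref{1p}(1), is the heart of the proof.
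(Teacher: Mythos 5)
Your proof is correct and takes essentially the same route as the paper's: deduce Cohen--Macaulayness (hence the existence of a canonical module) from the New Intersection Theorem, use Lemma \ref{30} to trap $\overline{\c_1(R)}$ inside the half-space $\{\chi(L,-)\ge0\}$, invoke Proposition \ref{1p}(1) to see that $\overline{\c_1(R)}\cap\k(R)_\R$ is a linear subspace of the one-dimensional space $\k(R)_\R$, hence zero, and conclude via the implication (e) $\Rightarrow$ (a) of Theorem \ref{1}. The only difference is presentational: you phrase the final step as ``a subspace contained in a ray is zero,'' while the paper exhibits the point $\alpha$ with $\chi(L,\alpha)<0$ (coming from $-[T]$) that lies in $\k(R)_\R$ but outside $\overline{\c_1(R)}$.
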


\begin{proof}
As $\chi(L,T)\ne0$, we have $L\ne0$.
Thanks to the (New) Intersection Theorem, $R$ is Cohen--Macaulay (see \cite[Exercise 9.4.9]{BH}).
As $R$ is a quotient of a Gorenstein ring, it has a canonical module.
By Theorem \ref{1}, it suffices to show $\overline{\c_1(R)}\cap\k(R)_\R=0$.
As $\chi(L,-T)=-\chi(L,T)$, there exists $\alpha\in\k(R)_\R$ with $\chi(L,\alpha)<0$.
Then $\alpha$ is not in the set $X$ given in Lemma \ref{30}.
The lemma shows $\alpha\notin\overline{\c_1(R)}$.
Proposition \ref{1p} says $\overline{\c_1(R)}\cap\k(R)_\R$ is a subspace of $\k(R)_\R\cong\Cl(R)_\R\cong\R$, and we must have $\overline{\c_1(R)}\cap\k(R)_\R=0$.
\end{proof}

\begin{rem}\label{12}
\begin{enumerate}[(1)]
\item
The assumption of Corollary \ref{25} on the intersection multiplicity is equivalent to the existence of a torsion $R$-module which is not numerically trivial by Lemma \ref{30-0}(1) (note that $R$ must be Cohen--Macaulay as we saw at the beginning of the proof of Corollary \ref{25}).
\item
Corollary \ref{25} can also be deduced by using a result of Dao and Kurano.
Since $T$ is a torsion $R$-module which is not numerically trivial, we must have $\dim R=3$ (see \cite[Proposition 2.2 and the preceding part]{CK}).
Let $\rho:\A_2(R)\to\overline{\A_2(R)}$ be the natural map, where $\A_\ast(R)$ and $\overline{\A_\ast(R)}$ denote the Chow group of $R$ and its quotient by numerical equivalence, respectively.
Since $\Cl(R)$ is finitely generated and of rank one, $\A_2(R)$ is finitely generated as well and $\dim_\Q\A_2(R)_\Q=1$.
As we saw in the proof of Corollary \ref{25}, we may assume $T=R/\p$ for some prime ideal $\p$ of $R$ with height one.
This gives an element of $\A_2(R)_\Q$ which does not belong to $\ker(\rho_\Q)$, and hence $\rho_\Q\ne0$.
We see that $(\ker\rho)_\Q=0$, which implies that $\ker\rho$ is finite.
It follows from \cite[Corollary 4.6]{DK} that there exist only finitely many maximal Cohen--Macaulay $R$-modules of rank one up to isomorphism.
(Since $R$ is an integrally closed domain of dimension three, finite generation of $\Cl(R)$ implies finite generation of $\overline{\g(R)}$.
The condition \cite[Assumption 2.1]{DK} in \cite[Corollary 4.6]{DK} is used to get finite generation of $\overline{\g(R)}$.)
\end{enumerate}
\end{rem}

\begin{ex}\label{27}
Let $k$ be a field.
Consider $R=k[\![x,y,z,w]\!]/(xw-yz)$ and $\p=(x,y)R$.
Then $R$ is a $3$-dimensional local hypersurface with an isolated singularity, $\p$ is a prime ideal of $R$ with height one, and $\Cl(R)=\Z\p\cong\Z$ by \cite[Proposition 14.8]{F}.
By virtue of \cite{DHM}, there exists an $R$-module $L$ of finite length and finite projective dimension such that $\chi(L,R/\p)=-1\ne0$.
We can apply Corollary \ref{25} to deduce that there exist only finitely many nonisomorphic maximal Cohen--Macaulay $R$-modules of rank one\footnote{This statement is itself an immediate consequence of the fact that $R$ has finite Cohen--Macaulay representation type by \cite[Corollary (12.6)]{Y} (at least when $k$ is an algebraically closed field of characteristic zero).}.
\end{ex}

\section{Maximal Cohen--Macaulay points in $\Cl(R)$}

In the previous section we give several equivalent conditions for a normal domain to possess only finitely many maximal Cohen--Macaulay modules of rank one.
The main purpose of this section is to try to approach more directly the problem asking when this is the case.

We begin with the lemma below, which is a generalization of \cite[Proposition 2.5.1]{I}.

\begin{lem}\label{4}
Let $R$ be a local ring.
Let $M,N$ be $R$-modules and $n\ge2$ an integer.
Suppose that $M$ is locally free on the punctured spectrum of $R$, that $\depth_R N\ge n-1$ and that $\depth_R\Hom_R(M,N)\ge n$.
It then holds that $\Ext_R^i(M,N)=0$ for all $1\le i\le n-2$.
\end{lem}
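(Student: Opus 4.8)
The first thing to record is a finiteness observation: for every $i\ge1$ the module $\Ext_R^i(M,N)$ has finite length. Indeed, $M$ is locally free on the punctured spectrum, so for each nonmaximal prime $\p$ the localization $M_\p$ is $R_\p$-free and $\Ext_R^i(M,N)_\p\cong\Ext_{R_\p}^i(M_\p,N_\p)=0$; hence $\Ext_R^i(M,N)$ is supported only at $\m$. The plan is then to fix a free resolution $F_\bullet\to M$ and pass to the cochain complex $C^\bullet=\Hom_R(F_\bullet,N)$, whose cohomology at $C^j$ is $\Ext_R^j(M,N)$. Writing $b_j=\beta_j^R(M)=\rk_RF_j$, each term is $C^j\cong N^{\oplus b_j}$, so $\depth C^j=\depth N\ge n-1$ whenever $b_j>0$, whereas $H^0(C^\bullet)=\Hom_R(M,N)$ has depth at least $n$. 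The point of the argument is that the single extra unit of depth carried by $\Hom_R(M,N)$ can be propagated along the resolution and spent one step at a time.

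I would argue by minimal counterexample. Suppose $\Ext_R^i(M,N)\ne0$ for some $i$ with $1\le i\le n-2$, and choose $i$ minimal with this property. (Note $b_j>0$ for all $j\le i$, since otherwise the resolution would stop by step $i$ and $\Ext_R^i(M,N)$ would already vanish; so every $C^j$ with $j\le i$ has depth $\ge n-1$.) Set $Z^j=\ker(C^j\to C^{j+1})$, $B^j=\im(C^{j-1}\to C^j)$, so $Z^0=\Hom_R(M,N)$ and $Z^j=B^j$ for $1\le j\le i-1$ by minimality. Exactness then yields
$$0\to\Hom_R(M,N)\to C^0\to C^1\to\cdots\to C^{i-1}\to B^i\to0.$$
Splitting this into the short exact sequences $0\to Z^j\to C^j\to Z^{j+1}\to0$ for $0\le j\le i-2$ together with $0\to Z^{i-1}\to C^{i-1}\to B^i\to0$, and propagating depths through the depth lemma (using $\depth C\ge\min\{\depth A-1,\depth B\}$), I obtain $\depth Z^j\ge n-j$ inductively and, at the last step, $\depth B^i\ge n-i$. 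As $i\le n-2$, this gives $\depth B^i\ge2$.

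The contradiction now comes from the two short exact sequences at spot $i$, namely $0\to B^i\to Z^i\to\Ext_R^i(M,N)\to0$ and $0\to Z^i\to C^i\to B^{i+1}\to0$. The second gives $\depth Z^i\ge\min\{\depth C^i,\depth B^{i+1}+1\}\ge\min\{n-1,1\}\ge1$. Feeding $\depth B^i\ge2$ and $\depth Z^i\ge1$ into the depth lemma applied to the first sequence forces $\depth\Ext_R^i(M,N)\ge\min\{\depth B^i-1,\depth Z^i\}\ge1$. But $\Ext_R^i(M,N)$ is a nonzero module of finite length, so its depth is $0$, a contradiction. Hence no such $i$ exists, and $\Ext_R^i(M,N)=0$ for all $1\le i\le n-2$.

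The step I expect to require the most care is the bookkeeping in the depth propagation: one must keep the direction of each inequality in the depth lemma straight and confirm that each $C^j$ occurring truly has depth $\ge n-1$ (which can fail only when $b_j=0$, a case already disposed of by the minimality of the counterexample). The mechanism itself is transparent: the hypothesis $\depth\Hom_R(M,N)\ge n$ is exactly the ``budget'' of depth that, after being reduced by one as one moves each place along the resolution, still leaves $\depth B^i\ge2$ throughout the range $1\le i\le n-2$, which is incompatible with a nonzero finite-length cohomology module sitting there.
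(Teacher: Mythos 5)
Your proof is correct and follows essentially the same route as the paper's: argue by minimal counterexample, dualize the minimal free resolution by $N$, use that $\Ext_R^i(M,N)$ has finite length because $M$ is locally free on the punctured spectrum, and run the depth lemma along the resulting exact sequence. The only (cosmetic) difference is the direction of the depth chase: the paper propagates depths backward from the finite-length module to pin down $\depth\Hom_R(M,N)=l+1\le n-1$, contradicting $\depth\Hom_R(M,N)\ge n$, whereas you propagate forward from $\Hom_R(M,N)$ to force $\depth\Ext_R^i(M,N)\ge1$, contradicting finite length.
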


\begin{proof}
Assume that the conclusion does not hold.
Putting $l=\inf\{i\ge1\mid\Ext^i(M,N)\ne0\}$, we have $1\le l\le n-2$.
Let $F$ be a minimal free resolution of $M$.
Dualizing this by $N$, we get an exact sequence
$$
0\to\Hom(M,N)\to\Hom(F_0,N)\to\cdots\to\Hom(F_{l-1},N)\to\Hom(\syz^lM,N)\to\Ext^l(M,N)\to0.
$$
Since $M$ is locally free on the punctured spectrum of $R$, the $R$-module $\Ext^l(M,N)$ has finite length.
As $\depth N\ge n-1\ge1$, we have $\depth\Hom(\syz^lM,N)\ge1$ and $\depth\Hom(F_j,N)\ge n-1$ for all $0\le j\le l-1$.
The depth lemma shows $\depth\Hom(M,N)=l+1$, which contradicts the assumption $\depth\Hom(M,N)\ge n$.
This completes the proof of the lemma.
\end{proof}

For a module $M$ over a local ring $R$, we denote by $\nu_R(M)$ the minimal number of generators of $M$.
Recall that $M$ is called {\em rigid} if $\Ext_R^1(M,M)=0$.
We record three cases where there are only finitely many nonisomorphic maximal Cohen--Macaulay modules of rank one.

\begin{prop}\label{18}
There exist only finitely many nonisomorphic maximal Cohen--Macaulay $R$-modules of rank one in each of the following three cases.
\begin{enumerate}[\rm(1)]
\item
$R$ is a $3$-dimensional local hypersurface with an isolated singularity, which has a desingularization $f:X\to\spec R$ such that $X\setminus f^{-1}(\m)\simeq\spec R\setminus\{\m\}$, where $\m$ is the maximal ideal of $R$.
\item
$R$ is a $d$-dimensional Cohen--Macaulay local ring with $d\ge3$ and with an isolated singularity, which is the completion of a finitely generated graded algebra $A=\bigoplus_{i\ge0}A_i$ over an algebraically closed field $k$ with $\dim_kA_0<\infty$.
\item
$R$ is a $d$-dimensional complete local hypersurface with $d\ge2$ and with algebraically closed uncountable coefficient field of characteristic not $2$, which has countable Cohen--Macaulay representation type.
\end{enumerate}
\end{prop}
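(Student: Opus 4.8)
The plan is to deduce all three cases from Theorem \ref{1}. In each case $R$ is Cohen--Macaulay; in cases (1) and (2) the isolated singularity together with Serre's criterion makes $R$ a normal local domain (hypersurfaces and completions of the stated graded algebras are excellent, so $R_1+S_2$ holds and a normal local ring is a domain), and a rank one maximal Cohen--Macaulay module is then a reflexive ideal, i.e.\ a maximal Cohen--Macaulay point of $\Cl(R)$. Since $R$ is normal and $\cl$ gives $\Cl(R)\cong\k(R)$, the equivalence of (a)--(f) for $r=1$ in Theorem \ref{1} reduces the problem to two verifications: that $\h(R)$, equivalently $\Cl(R)$, is finitely generated, and that $\overline{\c_1(R)}\cap\k(R)_\R=0$ (condition (e), equivalently strong convexity of $\overline{\c_1(R)}$ by Proposition \ref{1p}). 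I would establish finite generation of $\Cl(R)$ and then strong convexity separately in each geometric setting.

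For case (2) I would pass to $Y=\Proj A$, which is smooth of dimension $d-1\ge2$ because the singularity is isolated. As $R$ is Cohen--Macaulay, $A$ is arithmetically Cohen--Macaulay, so $\H^1(Y,\mathcal O_Y)=0$; hence $\operatorname{Pic}^0(Y)=0$ and $\operatorname{Pic}(Y)$ equals the finitely generated N\'eron--Severi group. Identifying $\Cl(R)\cong\operatorname{Pic}(Y)/\Z[\mathcal O_Y(1)]$ (the isolated singularity lets one compute before completing) then gives finite generation of $\Cl(R)$, hence of $\h(R)$. A rank one reflexive module is maximal Cohen--Macaulay exactly when the corresponding line bundle $\mathcal L$ is arithmetically Cohen--Macaulay, i.e.\ $\H^i(Y,\mathcal L\otimes\mathcal O_Y(n))=0$ for $0<i<\dim Y$ and all $n\in\Z$; Serre vanishing and duality show this fails once the numerical class of $\mathcal L$ is sufficiently positive or negative relative to $\mathcal O_Y(1)$ in any direction, which is precisely what forces $\overline{\c_1(R)}\cap\k(R)_\R=0$. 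I would route this positivity input through Lemma \ref{30}: each suitable $L$ of finite length and finite projective dimension confines $\overline{\c_1(R)}$ to a half-space $\chi(L,-)\ge0$, and producing enough such $L$, transverse to the various rays of $\k(R)_\R$, pins the intersection to the origin, as in Corollary \ref{25} but now with $\Cl(R)$ of possibly higher rank.

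For case (1) the desingularization plays the role of $Y$. Since $f$ is an isomorphism over the punctured spectrum $U=\spec R\setminus\{\m\}$ and $X$ is regular, restriction gives $\Cl(R)\cong\operatorname{Pic}(U)\cong\operatorname{Pic}(X)/\langle\text{exceptional divisors}\rangle$, and $\operatorname{Pic}(X)$ is finitely generated, so $\Cl(R)$ and $\h(R)$ are finitely generated. Strong convexity of $\overline{\c_1(R)}$ is the crux: I would pair $\Cl(R)=A_2(R)$ against curves in the two-dimensional exceptional fibre to see that numerical equivalence has finite kernel, which via Lemma \ref{30} and the reasoning of Remark \ref{12} again yields $\overline{\c_1(R)}\cap\k(R)_\R=0$. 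Equivalently, this is exactly the hypersurface situation of Dao and Kurano, and one may instead verify the hypotheses of \cite[Corollary 4.6]{DK} from the desingularization and cite their conclusion.

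For case (3), where $R$ need not be normal, I would argue representation-theoretically. By the classification of complete hypersurface singularities of countable but not finite Cohen--Macaulay representation type over an algebraically closed uncountable field of characteristic $\ne2$ (Buchweitz--Greuel--Schreyer), such an $R$ is, up to Kn\"orrer periodicity, one of the singularities $(A_\infty)$ and $(D_\infty)$. Kn\"orrer periodicity identifies the indecomposable maximal Cohen--Macaulay modules and tracks their ranks, so it suffices to read off the explicit matrix-factorization lists in the base cases and observe that only finitely many indecomposables have rank one; cf.\ \cite{Y}. The main obstacle across all three cases is the strong-convexity step in (1) and (2): finite generation of $\Cl(R)$ is comparatively soft, but excluding a nonzero numerical direction from $\overline{\c_1(R)}$ demands the cohomological and intersection-theoretic positivity, and assembling enough finite-length, finite-projective-dimension modules to cover every ray of $\k(R)_\R$ when $\Cl(R)$ has rank greater than one is the delicate point.
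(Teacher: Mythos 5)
Your reduction of everything to Theorem \ref{1} runs into a genuine gap in case (2), and it is exactly the step you flag as ``the crux.'' To apply Theorem \ref{1} you must verify $\overline{\c_1(R)}\cap\k(R)_\R=0$, and your argument for this does not work: ``Serre vanishing and duality'' cannot show that arithmetic Cohen--Macaulayness fails for all numerically large classes, because Serre vanishing concerns large twists of a \emph{fixed} line bundle and is not uniform over the N\'eron--Severi group; the claim that only a bounded set of classes in $\operatorname{Pic}(Y)/\Z[\mathcal{O}_Y(1)]$ can be ACM is essentially the statement you are trying to prove. Your backup route through Lemma \ref{30} is also unavailable: that lemma requires $\dim R\le3$, while case (2) allows arbitrary $d\ge3$; and even for $d=3$, producing modules $L$ of finite length and finite projective dimension whose forms $\chi(L,-)$ cut out half-spaces transverse to every ray of $\k(R)_\R$ is the genuinely hard part (such $L$ are special constructions in the style of \cite{DHM}, and nothing guarantees enough of them when $\Cl(R)$ has rank greater than one) --- as you yourself concede at the end. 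The paper avoids the cone machinery entirely here: a rank one maximal Cohen--Macaulay module $M$ satisfies $\Hom_R(M,M)\cong R$ (since $M-M=0$ in $\Cl(R)$), so Lemma \ref{4} with the isolated singularity hypothesis and $d\ge3$ shows $M$ is \emph{rigid}, and finiteness of rigid maximal Cohen--Macaulay modules in this graded-complete setting is then quoted from \cite[Corollary B]{DS}. That rigidity idea is the missing ingredient, and it cannot be recovered from your outline.

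Your case (1) is fine in substance, since your fallback --- verifying the hypotheses of \cite[Corollary 4.6]{DK} --- matches the paper, which simply cites \cite[Theorem 4.7 and Corollary 4.8]{DK}. Case (3) follows the paper's strategy (classification of $(\A_\infty)$, $(\D_\infty)$ plus Kn\"orrer periodicity), but the phrase ``tracks their ranks'' conceals the actual difficulty: rank is not preserved by the stable equivalence $\underline\cm(S)\to\underline\cm(R)$; what transforms cleanly is the number of generators, $\nu_R(M)=2\nu_S(N)$, and the low-dimensional base singularities in the lists of \cite{BGS,BD} are not even domains, so ``rank one'' is undefined there. The paper closes this by the multiplicity bound $\nu_R(M)\le\e(R)=2$, which forces $N$ to be cyclic, then uses that a domain admits no nonfree cyclic maximal Cohen--Macaulay module to push the problem down to $\dim S\le2$, where the explicit lists yield finiteness of cyclic maximal Cohen--Macaulay modules. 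Without an argument of this kind, your ``read off the lists and observe'' step has no content.
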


\begin{proof}
(1) The assertion is shown in \cite[Theorem 4.7 and Corollary 4.8]{DK}.

(2) Let $M$ be a maximal Cohen--Macaulay $R$-module of rank $1$.
Then $M$ corresponds to an element of $\Cl(R)$.
We have $\Hom_R(M,M)=M-M=0$ in $\Cl(R)$, which means $\Hom_R(M,M)\cong R$.
Lemma \ref{4} implies that $M$ is rigid, as $d\ge3$.
The assertion follows from \cite[Corollary B]{DS}.

(3) We may assume that $R$ has infinite Cohen--Macaulay representation type.
Then $R$ is a hypersurface of type $(\A_\infty)$ or $(\D_\infty)$, and the isomorphism classes of indecomposable maximal Cohen--Macaulay $R$-modules are classified completely \cite{BGS,BD}.
Fix a nonfree maximal Cohen--Macaulay $R$-module $M$ of rank one.
If $d=2$, then it is directly seen from \cite[Theorems 5.3 and 5.7]{BD} that $\nu_R(M)\le2$ and there are only finitely many choices of $M$.
Let $d\ge3$.
Then $R$ is a (normal) domain, and $\nu_R(M)\le\e(M)=\e(R)\cdot\rk_RM=\e(R)=2$.
By Kn\"orrer's periodicity (\cite[Chapter 12]{Y}), we can write $R=T[\![x,y]\!]/(f+xy)$, where $T$ is a formal power series ring in $d-1$ variables, $f\in T$ and $S=T/(f)$ is a $(d-2)$-dimensional hypersurface of the same type as $R$.
Via the equivalence $\underline\cm(S)\to\underline\cm(R)$ of stable categories of maximal Cohen--Macaulay modules, the nonfree maximal Cohen--Macaulay $R$-module $M$ comes from a nonfree maximal Cohen--Macaulay $S$-module $N$ such that $\nu_R(M)=2\nu_S(N)$.
The fact that $\nu_R(M)\le2$ forces us to have $\nu_R(M)=2$ and $\nu_S(N)=1$.
If $\dim S$ were at least three, then $S$ would be a (normal) domain.
A domain cannot possess a nonfree cyclic maximal Cohen--Macaulay module.
Hence $\dim S\le2$.
It is observed from \cite[Theorems 5.3 and 5.7]{BD} and \cite[Propositions 4.1 and 4.2]{BGS} that there exist only a finite number of isomorphism classes of cyclic maximal Cohen--Macaulay $S$-modules.
Thus there are only finitely many choices of $N$, and the same thing holds for $M$.
\end{proof}

Let $R$ be a local ring.
Let $n>0$ be an integer.
An $R$-module $M$ is called {\em $n$-periodic} if $\syz_R^nM\cong M$.
By definition, a nonzero free $R$-module cannot be $n$-periodic for all $n>0$.
If $R$ is Cohen--Macaulay, then an $n$-periodic $R$-module is maximal Cohen--Macaulay, since it is an $m$th syzygy for some $m\ge\dim R$.
To get our next result, we state a lemma.

\begin{lem}\label{7}
Let $n$ be a positive integer.
Let $R$ be a local ring with $\depth R\ge n+1$.
Let $M\ne0$ be an $n$-periodic $R$-module with $\Ext_R^i(M,R)=0$ for all $1\le i\le n$, and assume that $M$ is locally free on the punctured spectrum of $R$.
One then has $\depth_R\Hom_R(M,M)\le n+1$.
\end{lem}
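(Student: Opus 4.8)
The plan is to prove the contrapositive: assuming $\depth_R\Hom_R(M,M)\ge n+2$, I will show that $M$ must be free, which is absurd since a nonzero free module is never $n$-periodic. The first task is to pin down the depth of $M$ itself, and here the key point is that the naive estimate is not strong enough. Since $M$ is $n$-periodic we have $M\cong\syz_R^nM$, and all syzygies of $M$ are nonzero (otherwise $M$ would have finite projective dimension and could not be periodic). Feeding the short exact sequences $0\to\syz_R^{j+1}M\to F_j\to\syz_R^jM\to0$ from the minimal free resolution into the depth lemma gives $\depth\syz_R^{j+1}M\ge\min\{\depth R,\depth\syz_R^jM+1\}$, and an easy induction yields $\depth\syz_R^nM\ge\min\{\depth R,\depth_RM+n\}$. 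Because $\syz_R^nM\cong M$ and $n\ge1$, the alternative $\depth R>\depth_RM+n$ is impossible, so this forces $\depth_RM\ge\depth R\ge n+1$. (The crude bound $\depth\syz_R^nM\ge\min\{\depth R,n\}$ gives only $\depth_RM\ge n$; it is the sharper iterated estimate, applied over a single period, that produces the extra $+1$.)

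With $\depth_RM\ge n+1$ in hand, I would apply Lemma \ref{4} with its integer parameter taken to be $n+2$ and with both module arguments equal to $M$. Its three hypotheses hold: $M$ is locally free on the punctured spectrum, $\depth_RM\ge(n+2)-1$, and $\depth_R\Hom_R(M,M)\ge n+2$ by assumption. The conclusion is $\Ext_R^i(M,M)=0$ for $1\le i\le n$; in fact I only need the single value $\Ext_R^n(M,M)=0$.

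Finally I would exploit periodicity at the level of the resolution. Via $\syz_R^nM\cong M$, the sequence $0\to\syz_R^nM\to F_{n-1}\to\syz_R^{n-1}M\to0$ becomes a short exact sequence $0\to M\to F_{n-1}\to\syz_R^{n-1}M\to0$ whose extension class lives in $\Ext_R^1(\syz_R^{n-1}M,M)$. The dimension-shift isomorphism $\Ext_R^1(\syz_R^{n-1}M,M)\cong\Ext_R^n(M,M)$ identifies this group with $0$, so the sequence splits and $M$ is a direct summand of the free module $F_{n-1}$, hence free over the local ring $R$ — the desired contradiction. I expect the main obstacle to be precisely the depth estimate for $M$: one has to notice that the standard syzygy bound falls one short and that periodicity is exactly what upgrades it to $\depth_RM\ge\depth R$. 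Once that is secured, the remainder is a clean combination of Lemma \ref{4} with the shift identity and the splitting criterion, and notably requires only the single vanishing $\Ext_R^n(M,M)=0$ rather than vanishing of all positive self-extensions.
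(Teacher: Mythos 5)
Your proof is correct, and it departs from the paper's argument at both ends while sharing the same middle step. The paper pins down $\depth M$ by showing $M$ is totally reflexive: the hypothesis $\Ext_R^i(M,R)=0$ for $1\le i\le n$, combined with periodicity, makes the $R$-dual of the doubly infinite periodic free complex exact, and then \cite[Theorems (1.4.8) and (4.2.6)]{C} give $\depth M=\depth R\ge n+1$. You instead get $\depth M\ge\depth R$ by a purely elementary device: iterate the depth lemma over one period to obtain $\depth M=\depth\syz^nM\ge\min\{\depth R,\depth M+n\}$ and note that the second alternative is absurd. Both proofs then invoke Lemma \ref{4} in exactly the same way to conclude $\Ext_R^i(M,M)=0$ for $1\le i\le n$. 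For the endgame, the paper works with stable hom-sets, $0=\Ext_R^n(M,M)\cong\lhom(\syz^nM,M)\cong\lhom(M,M)$, so that the identity of $M$ factors through a free module (an identification that again leans on the vanishing of $\Ext_R^i(M,R)$); you instead use the dimension-shift $\Ext_R^1(\syz^{n-1}M,M)\cong\Ext_R^n(M,M)=0$ to split the sequence $0\to M\to F_{n-1}\to\syz^{n-1}M\to 0$ outright, making $M$ a free direct summand. A noteworthy by-product of your route is that the hypothesis $\Ext_R^i(M,R)=0$ for $1\le i\le n$ is never used: periodicity, local freeness on the punctured spectrum, and the two depth conditions already force $\depth_R\Hom_R(M,M)\le n+1$, so your argument proves a formally stronger statement. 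What the paper's route buys in exchange is the exact equality $\depth M=\depth R$ and the placement of $M$ within the standard theory of totally reflexive modules, at the cost of invoking that machinery.
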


\begin{proof}
First of all, we observe that there is an exact sequence
$$
\cdots\xrightarrow{\partial_2}F_1\xrightarrow{\partial_1}F_0\xrightarrow{f}F_{n-1}\xrightarrow{\partial_{n-1}}F_{n-2}\xrightarrow{\partial_{n-2}}\cdots\xrightarrow{\partial_2}F_1\xrightarrow{\partial_1}F_0\xrightarrow{f}F_{n-1}\xrightarrow{\partial_{n-1}}F_{n-2}\xrightarrow{\partial_{n-2}}\cdots
$$
of free $R$-modules such that $\im f=M=\syz^nM\subseteq\m F_{n-1}$ and $\im\partial_i\subseteq\m F_{i-1}$ for $1\le i\le n-1$.
Since $\Ext^i(M,R)=0$ for $1\le i\le n$, we see that the $R$-dual of the above sequence is exact as well.
Hence $M$ is totally reflexive, and $\depth M=\depth R\ge n+1$; see \cite[Theorems (1.4.8) and (4.2.6)]{C}.

Suppose that $\depth\Hom(M,M)\ge n+2$.
Then Lemma \ref{4} implies $\Ext^i(M,M)=0$ for all $1\le i\le n$.
As $\syz^nM\cong M$, we have $0=\Ext^n(M,M)\cong\lhom(\syz^nM,M)\cong\lhom(M,M)$, which implies that the identity map of $M$ factors through some free $R$-module $P$; we refer the reader to \cite[\S7]{catgp} for basic properties of stable hom-sets.
Therefore $M$ is a direct summand of $P$, and thus $M$ is free.
This contradicts the assumption that $M$ is $n$-periodic, and it follows that $\depth\Hom(M,M)\le n+1$.
\end{proof}

Now we can prove the following proposition concerning $n$-periodic ideals of $R$, that is, ideals of $R$ that are $n$-periodic as $R$-modules.

\begin{prop}\label{6}
\begin{enumerate}[\rm(1)]
\item
Let $R$ be a normal local ring of depth $t$.
Let $1\le n\le t-2$ be an integer.
Let $I\ne0$ be an $n$-periodic ideal of $R$.
Suppose that $I$ is locally free on the punctured spectrum of $R$.
Then $\Ext_R^i(I,R)\ne0$ for some $1\le i\le n$.
\item
Let $R$ be an Gorenstein local ring with an isolated singularity.
Let $1\le n\le\dim R-2$ be an integer.
Then there exists no nonzero $n$-periodic ideal of $R$.
\item
Let $R$ be a local hypersurface of dimension at least four with an isolated singularity.
Then any maximal Cohen--Macaulay $R$-module of rank one is free.
\end{enumerate}
\end{prop}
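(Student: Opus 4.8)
The plan is to prove (1) by contradiction and then obtain (2) and (3) as consequences, with Lemma \ref{7} serving as the engine. For (1), suppose to the contrary that $\Ext_R^i(I,R)=0$ for all $1\le i\le n$. The decisive computation is that of $\Hom_R(I,I)$. Since $R$ is a normal domain and $I$ is a nonzero ideal of rank one, each $R$-endomorphism of $I$ extends over the fraction field $K$ to multiplication by some $q\in K$ with $qI\subseteq I$; because $I$ is finitely generated, such a $q$ is integral over $R$ (determinant trick) and hence lies in $R$ by normality. Thus the natural inclusion $R\hookrightarrow\Hom_R(I,I)$ is an isomorphism, so $\depth_R\Hom_R(I,I)=\depth R=t$. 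On the other hand, $\depth R=t\ge n+1$, so Lemma \ref{7} applies to $M=I$ (which is $n$-periodic, nonzero, and locally free on the punctured spectrum) and gives $\depth_R\Hom_R(I,I)\le n+1$. Since $t\ge n+2$, this is a contradiction, proving (1).

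For (2), I would first verify that $R$ is normal: being Gorenstein it is Cohen--Macaulay, hence $(S_2)$, and the isolated singularity hypothesis together with $\dim R\ge n+2\ge3$ gives $(R_1)$, so by Serre's criterion \cite{BH} the ring $R$ is a normal domain with $\depth R=\dim R=:t$ and $1\le n\le t-2$. If $I\ne0$ were an $n$-periodic ideal, then it would be maximal Cohen--Macaulay (being a high syzygy of itself) and, by the isolated singularity hypothesis, locally free on the punctured spectrum; part (1) would then force $\Ext_R^i(I,R)\ne0$ for some $1\le i\le n$, contradicting the fact that over a Gorenstein ring a maximal Cohen--Macaulay module $I$ satisfies $\Ext_R^{>0}(I,R)=0$. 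For (3), let $M$ be a maximal Cohen--Macaulay module of rank one; the same Serre argument (now with $\dim R\ge4$) shows $R$ is a normal domain, and the Convention identifies $M$ with a reflexive ideal $I$. If $I$ were not free, then, being torsion-free of rank one, it would have no nonzero free direct summand, so Eisenbud's matrix factorization theory over the hypersurface $R$ (see \cite{Y}) gives $\syz_R^2I\cong I$, making $I$ a nonzero $2$-periodic ideal. Applying (2) with $n=2$, which is legitimate precisely because $2\le\dim R-2$, yields a contradiction; hence $I$, and therefore $M$, is free.

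The main obstacle is the endomorphism identification $\Hom_R(I,I)\cong R$ in (1): this is the one place where normality is used essentially, and it is exactly what converts the depth estimate of Lemma \ref{7} into a contradiction. Everything else is bookkeeping---checking normality via Serre's criterion, recognizing periodic ideals as maximal Cohen--Macaulay and locally free on the punctured spectrum, invoking hypersurface periodicity, and tracking the numerology $n\le\dim R-2$, which in (3) is what pins the dimension bound to $\dim R\ge4$.
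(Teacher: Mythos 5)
Your proof is correct and follows essentially the same route as the paper: part (1) via the identification $\Hom_R(I,I)\cong R$ for a nonzero ideal over a normal domain combined with the depth bound of Lemma \ref{7}, part (2) by noting a nonzero $n$-periodic ideal would be maximal Cohen--Macaulay and locally free on the punctured spectrum, contradicting (1) since $\Ext_R^{>0}(I,R)=0$ over a Gorenstein ring, and part (3) via Eisenbud's $2$-periodicity over a hypersurface. You merely make explicit some points the paper leaves tacit (normality via Serre's criterion in (2) and (3), the determinant-trick computation of the endomorphism ring, and the absence of free summands needed for $2$-periodicity), all of which are accurate.
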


\begin{proof}
(1) Since $R$ is normal and $I\ne0$, we have $\Hom_R(I,I)\cong R$.
The assertion follows by Lemma \ref{7}.

(2) Assume that there exists a nonzero $n$-periodic ideal $I$ of $R$.
Then $I$ is a maximal Cohen--Macaulay $R$-module, and $\Ext_R^i(I,R)=0$ for all $i>0$ as $R$ is Gorenstein.
Since $R$ has an isolated singularity, $I$ is locally free on the punctured spectrum of $R$.
In view of (1), we have a contradiction.

(3) Suppose that there is a nonfree maximal Cohen--Macaulay $R$-module $M$ of rank $1$.
Then $M$ is isomorphic to a nonzero ideal, and $2$-periodic as $R$ is a hypersurface.
By (2) we get a contradiction.
\end{proof}

\begin{rem}
\begin{enumerate}[(1)]
\item
A more general statement than Proposition \ref{6}(3) is known, namely, a local complete intersection which is factorial in codimension three is factorial; see \cite[Corollary]{CL}.
\item
The assumption that $\dim R\ge4$ in Proposition \ref{6}(3) is indispensable.
In fact, let $R,\p$ be as in Example \ref{27}.
Then $\p$ is a nonfree maximal Cohen--Macaulay $R$-module of rank one.
\end{enumerate}
\end{rem}

Let $I$ be an ideal of $R$.
We denote by $\cd I$ the {\em cohomological dimension} of $I$, that is to say, $\cd I=\sup\{i\in\Z\mid\H^i_I(R)\ne0\}$.
Also, recall that a local ring $R$ is called {\em analytically irreducible} if the completion $\widehat R$ of $R$ is a domain.
To prove the next theorem, we establish a lemma.

\begin{lem}\label{26}
Let $R$ be a $d$-dimensional Cohen--Macaulay local ring.
Let $I$ be an ideal of $R$ which is locally free on the punctured spectrum of $R$.
Then the following statements hold true.
\begin{enumerate}[\rm(1)]
\item
For each positive integer $n$, the power $I^n$ is locally free on the punctured spectrum of $R$.
\item
If $I^\ast$ is a maximal Cohen--Macaulay $R$-module, then $\Ext_R^i(R/I,R)=0$ for all $2\le i\le d-1$.
\item
Suppose that $R$ is analytically irreducible with $d\ge2$ and that $I$ is a nonzero reflexive ideal.
Let $\{I^{a_i}\}_{i=1}^\infty$ be a cofinal subfiltration of the $I$-adic filtration $\{I^n\}_{n=1}^\infty$.
If the $R$-module $(I^{a_i})^\ast$ is maximal Cohen--Macaulay for all $i>0$, then $\cd I=1$.
\end{enumerate}
\end{lem}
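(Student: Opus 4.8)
For (1) I would argue prime by prime. Fix $\p\in\spec R$ with $\p\ne\m$; since localization commutes with products of ideals, $(I^n)_\p=(I_\p)^n$, so it suffices to understand $I_\p$. The key observation is that a free ideal has rank at most one: if the inclusion $I_\p\hookrightarrow R_\p$ corresponds, after a choice of basis, to an injection $R_\p^{\oplus s}\hookrightarrow R_\p$ sending $e_1,e_2$ to elements $x_1,x_2$, then $x_2e_1-x_1e_2$ lies in the kernel, forcing $x_1=x_2=0$ and hence $s\le1$. Thus $I_\p$ is either zero or free of rank one, say $I_\p=(x)$ with $x$ a nonzerodivisor, and then $(I_\p)^n=(x^n)$ is again free. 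So $I^n$ is locally free on the punctured spectrum.

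For (2) I would feed $I$ into Lemma \ref{4}. Applying $\Hom_R(-,R)$ to $0\to I\to R\to R/I\to0$ and using $\Ext_R^i(R,R)=0$ for $i\ge1$ gives natural isomorphisms $\Ext_R^j(R/I,R)\cong\Ext_R^{j-1}(I,R)$ for all $j\ge2$. Hence it is enough to prove $\Ext_R^i(I,R)=0$ for $1\le i\le d-2$ (the claimed range $2\le j\le d-1$ being empty unless $d\ge3$). I would apply Lemma \ref{4} with $M=I$, $N=R$ and $n=d$: indeed $I$ is locally free on the punctured spectrum, $\depth_R R=d\ge d-1$, and $\depth_R\Hom_R(I,R)=\depth_R I^\ast=d$ because $I^\ast$ is maximal Cohen--Macaulay. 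The lemma then delivers exactly $\Ext_R^i(I,R)=0$ for $1\le i\le d-2$, which is what is required.

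For (3) I would pass to local cohomology via the standard identification $\H_I^i(R)=\varinjlim_n\Ext_R^i(R/I^n,R)$, computed along the cofinal subfiltration $\{I^{a_i}\}$. By part (1) each $I^{a_i}$ is locally free on the punctured spectrum, and by hypothesis each $(I^{a_i})^\ast$ is maximal Cohen--Macaulay, so part (2) applies to every $I^{a_i}$ and yields $\Ext_R^j(R/I^{a_i},R)=0$ for $2\le j\le d-1$. Passing to the direct limit kills the middle cohomology: $\H_I^j(R)=0$ for $2\le j\le d-1$. Grothendieck vanishing disposes of $j>d$, and $\cd I\ge\grade(I,R)=\height I\ge1$ supplies the matching lower bound. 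Thus everything reduces to the single vanishing $\H_I^d(R)=0$.

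This top-degree statement is the main obstacle, and it is precisely where analytic irreducibility enters. I would invoke the Hartshorne--Lichtenbaum vanishing theorem: since $\widehat R$ is a domain of dimension $d$, one has $\H_I^d(R)=0$ as soon as $\dim R/I\ge1$, i.e.\ as soon as $I$ is not $\m$-primary. To rule out $\m$-primality of a nonzero proper reflexive ideal, suppose $I$ were $\m$-primary; then $R/I$ has finite length, so $\grade(R/I,R)=\depth R=d\ge2$ gives $\Ext_R^1(R/I,R)=0$. Dualizing $0\to I\to R\to R/I\to0$ and using $\Hom_R(R/I,R)=0$ (as $R/I$ is torsion over the domain $R$) then forces $I^\ast\cong R$, whence $I\cong I^{\ast\ast}\cong R$ is principal of height one, contradicting $\m$-primality in dimension $d\ge2$. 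Therefore $\dim R/I\ge1$, so $\H_I^d(R)=0$, and combining this with the earlier vanishing and the lower bound yields $\cd I=1$.
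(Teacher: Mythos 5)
Your proof is correct and follows the paper's route almost verbatim: part (1) by localizing at a nonmaximal prime and observing that powers of a free ideal stay free, part (2) by feeding $I$ into Lemma \ref{4} with $M=I$, $N=R$, $n=d$ and dimension-shifting along $0\to I\to R\to R/I\to0$, and part (3) by taking the direct limit of the vanishing from (1)+(2) along the cofinal subfiltration and invoking Hartshorne--Lichtenbaum together with Grothendieck vanishing. The only point where you genuinely diverge is the verification that $R/I$ is not artinian, i.e.\ $\dim R/I\ge1$, which is the hypothesis needed for Hartshorne--Lichtenbaum: the paper quotes the fact that a reflexive module has depth at least two (\cite[Exercise 1.4.19]{BH}) and applies the depth lemma to $0\to I\to R\to R/I\to0$, whereas you argue by contradiction that an $\m$-primary $I$ would satisfy $\Hom_R(R/I,R)=\Ext_R^1(R/I,R)=0$ (since $\grade(I,R)=\depth R=d\ge2$), forcing $I^\ast\cong R$ and then $I\cong I^{\ast\ast}\cong R$, which is impossible for an ideal of height $d\ge2$. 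Both arguments rest crucially on reflexivity of $I$; yours is self-contained where the paper's cites a standard exercise, at the cost of a slightly longer detour. Your explicit lower bound $\cd I\ge\grade(I,R)\ge1$ also makes precise a step the paper leaves implicit when it concludes $\cd I=1$ from $\H_I^j(R)=0$ for all $j\ne1$.
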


\begin{proof}
(1) Let $\p$ be a nonmaximal prime ideal of $R$.
By assumption, $IR_\p$ is $R_\p$-free, and $IR_\p\cong R_\p^{\oplus k}$ for some $k\ge0$.
Hence $I^2R_\p=I(IR_\p)\cong IR_\p^{\oplus k}\cong R_\p^{\oplus k^2}$.
Iterating this procedure shows that $I^nR_\p$ is $R_\p$-free.

(2) Lemma \ref{4} implies $\Ext_R^i(I,R)=0$ for all $1\le i\le d-2$.
Thus $\Ext_R^i(R/I,R)=0$ for all $2\le i\le d-1$.

(3) As $R$ is a domain and $I\ne0$, we have $\H_I^0(R)=0$.
Since $I$ is reflexive, it has depth at least two as an $R$-module by \cite[Exercise 1.4.19]{BH}.
The depth lemma says that the local ring $R/I$ has positive depth, and in particular, it is non-artinian.
As the completion of $R$ is a domain, by virtue of the Hartshorne--Lichtenbaum vanishing theorem \cite[8.2.1]{BS} we obtain $\H_I^j(R)=0$ for all $j\ge d$.
Fix an integer $i>0$.
It follows from (1) and (2) that $\Ext_R^j(R/I^{a_i},R)=0$ for all $2\le j\le d-1$.
As the subfiltration $\{I^{a_i}\}_{i=1}^\infty$ of the $I$-adic filtration $\{I^n\}_{n=1}^\infty$ is cofinal, we get $\H_I^j(R)\cong\varinjlim_i\Ext_R^j(R/I^{a_i},R)=0$ for all $2\le j\le d-1$; see \cite[Exercise 5.22]{R}.
Consequently, we have $\H_I^j(R)=0$ for all $j\ne1$, and we conclude that $\cd I=1$.
\end{proof}

We state an elementary fact on divisor class groups which is used in the proof of our theorem.

\begin{rem}\label{22}
Let $R$ be a normal domain.
Let $I$ be a nonzero reflexive ideal of $R$.
Let $n$ be a positive integer.
Then $(I^n)^\ast$ is a nonzero reflexive ideal of $R$ as well, and $(I^n)^\ast=-nI$ in $\Cl(R)$.
Indeed, the kernel of the natural surjection $I^{\otimes n}\to I^n$ is torsion by a rank computation.
This yields $(I^n)^\ast\cong(I^{\otimes n})^\ast$.
Let $C$ be the cokernel of the natural injection $I^n\to(I^n)^{\ast\ast}$.
Let $\p\in\spec R$ with $\depth R_\p\le1$.
As $R$ is normal, we have $\height\p\le1$, and $R_\p$ is regular.
Hence $IR_\p\cong R_\p$, which implies $C_\p=0$.
Therefore $\grade C\ge2$, and we get $(I^n)^{\ast\ast\ast}\cong(I^n)^\ast$.
Thus $(I^n)^\ast\cong(I^{\otimes n})^{\ast\ast\ast}$.
It remains to note that $(I^{\otimes n})^{\ast\ast\ast}=-nI$ in $\Cl(R)$.
\end{rem}

Now we can prove the theorem below, which is the first main result in this section.
Note that the limit $\lim_{n\to\infty}\depth R/I^n$ always exists; see \cite[Theorem (2)]{B}.

\begin{thm}\label{8}
Let $(R,\m)$ be a Gorenstein local ring of dimension $d\ge2$ with an isolated singularity.
Let $I$ be a nonzero reflexive ideal of $R$.
Assume that either of the following conditions is satisfied.
\\
\qquad
{\rm(1)} $R$ is analytically irreducible and $\cd I\ne1$.\qquad
{\rm(2)} $\lim_{n\to\infty}\depth R/I^n\ne0$.
\\
Then there exist only a finite number of maximal Cohen--Macaulay points on $\Z I$ in $\Cl(R)$.
In case {\rm(1)}, one further has that there is an isomorphism $\Z I\cong\Z$ of $\Z$-modules.
\end{thm}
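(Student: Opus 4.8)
The plan is to phrase everything in terms of reflexive representatives: for $n\in\Z$ the point $nI\in\Cl(R)$ is represented by the reflexive ideal $I^{(n)}:=(I^n)^{\ast\ast}$ (see Remark~\ref{22}), and $nI$ is a maximal Cohen--Macaulay point exactly when $I^{(n)}$ is maximal Cohen--Macaulay. Since $R$ has an isolated singularity, $I$ and all its powers are locally free on the punctured spectrum, so Lemma~\ref{26} is available throughout. As $R$ is Gorenstein, $(-)^\ast$ preserves maximal Cohen--Macaulayness and $(I^n)^\ast$ represents $-nI$; hence the set of maximal Cohen--Macaulay points on $\Z I$ is symmetric about $0$. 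Consequently, if this set is infinite, then $I$ has infinite order in $\Cl(R)$ (otherwise $\Z I$ is finite) and there are cofinally many $n>0$ with $(I^n)^\ast$ maximal Cohen--Macaulay.

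In case~(1) this immediately contradicts Lemma~\ref{26}(3): a cofinal subfiltration $\{I^{a_i}\}$ of $\{I^n\}$ all of whose duals $(I^{a_i})^\ast$ are maximal Cohen--Macaulay forces $\cd I=1$, against the hypothesis. This proves finiteness. For the isomorphism $\Z I\cong\Z$, suppose to the contrary that $I$ has finite order $m$; then $(I^{km})^\ast$ represents $-kmI=0$, so $(I^{km})^\ast\cong R$ is maximal Cohen--Macaulay for every $k$, and $\{I^{km}\}_{k>0}$ is cofinal. Lemma~\ref{26}(3) again gives $\cd I=1$, a contradiction, so $I$ must have infinite order.

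In case~(2) I would avoid duals and argue through depth. The limit $\ell:=\lim_n\depth_R(R/I^n)$ exists by \cite{B}, and $\ell\ne0$ means $\depth_R(R/I^n)\ge1$ for all $n\gg0$. Because each $I^n$ is locally free on the punctured spectrum (Lemma~\ref{26}(1)), the only possible embedded prime of $R/I^n$ is $\m$, so $I^{(n)}/I^n\cong\H^0_\m(R/I^n)$ has finite length and $\depth_R(R/I^n)\ge1$ holds if and only if $I^n=I^{(n)}$. Thus the hypothesis forces $I^n=I^{(n)}$ for all $n\gg0$; that is, the ordinary powers of $I$ are eventually reflexive.

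The crux—and the step I expect to be hardest—is to deduce that $I$ is torsion in $\Cl(R)$, after which $\Z I$ is finite and finiteness of the maximal Cohen--Macaulay points is automatic. I would prove the contrapositive: assuming $I$ has infinite order, show that $I^n\ne I^{(n)}$ for infinitely many $n$. First one reduces to the case $I^n=I^{(n)}$ for all $n\ge1$: if equality held for all $n\ge n_0$, then $J:=I^{(n_0)}$, which again has infinite order, would satisfy $J^k=I^{n_0k}=I^{(n_0k)}=J^{(k)}$ for all $k$, so the Rees algebra $\bigoplus_nJ^n$ would coincide with the symbolic Rees algebra $\bigoplus_nJ^{(n)}$; in particular the latter would be standard graded and Noetherian. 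It then remains to show that on an isolated singularity this forces the class to be torsion. I would attack this via the analytic spread: a nontorsion divisorial ideal corresponds to a nonample, degree-zero direction in $\operatorname{Pic}$ of the punctured spectrum, whose section ring is not generated in bounded degree, so that $\ell(J)=\dim R$ and hence $\lim_n\depth_R(R/J^n)=0$ by Brodmann's theory \cite{B}, contradicting eventual reflexivity. Establishing this last incompatibility—between positivity of the asymptotic depth and infinite order in $\Cl(R)$—is precisely where the geometry of the isolated singularity is genuinely used; everything else is formal bookkeeping with Lemma~\ref{26} and local duality.
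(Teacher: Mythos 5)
Your treatment of case (1) and of the isomorphism $\Z I\cong\Z$ is correct and is essentially the paper's own argument: symmetrize with $(-)^\ast$ to produce integers $0<a_1<a_2<\cdots$ such that each $(I^{a_i})^\ast$ is maximal Cohen--Macaulay (respectively, take the cofinal subfiltration $\{I^{km}\}_{k>0}$ when $I$ has finite order $m$), and contradict Lemma \ref{26}(3).

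Case (2), however, rests on a false claim. The step you isolate as the crux --- that hypothesis (2) alone forces $I$ to be torsion in $\Cl(R)$, so that $\Z I$ is finite --- cannot be proved, because it is refuted by the paper's own examples. Take $R=k[\![x,y,z,w]\!]/(xw-yz)$ and $I=\p=(x,y)R$ as in Example \ref{31}(1): the associated graded ring of $\p$ is a domain, so $\lim_{n\to\infty}\depth R/\p^n\ne0$ by Proposition \ref{10}, and in fact $\p^n=\p^{(n)}$ for every $n$, which is exactly the eventual reflexivity you derive; yet $\p$ generates $\Cl(R)=\Z\p\cong\Z$ and hence has infinite order. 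The theorem's conclusion holds there only because just three points of the infinite set $\Z\p$ (namely $0,\pm\p$) are maximal Cohen--Macaulay. The same phenomenon occurs in Examples \ref{31}(2)--(5). Consequently, no argument that discards the maximal Cohen--Macaulay hypothesis, as yours does in case (2), can prove the statement. Your supporting heuristic is also incorrect: Brodmann's theory gives $\lim_{n\to\infty}\depth R/I^n\le\dim R-\lambda(I)$, where $\lambda$ denotes the analytic spread, so hypothesis (2) forces $\lambda(I)<\dim R$ rather than creating any tension with it; and small analytic spread is perfectly compatible with infinite order in $\Cl(R)$ (in the example above, $\lambda(\p)\le\nu_R(\p)=2<3=\dim R$). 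There simply is no incompatibility between positive asymptotic depth and non-torsion divisor classes.

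The depth hypothesis has to be used jointly with maximal Cohen--Macaulayness of infinitely many points, and this is what the paper does. Assuming infinitely many maximal Cohen--Macaulay points on $\Z I$, one gets, as in case (1), integers $0<a_1<a_2<\cdots$ with $(I^{a_i})^\ast$ maximal Cohen--Macaulay, whence $\Ext_R^j(R/I^{a_i},R)=0$ for $2\le j\le d-1$ by assertions (1) and (2) of Lemma \ref{26}. Combining this with $\depth R/I^{a_i}>0$ (from hypothesis (2)) and local duality, $R/I^{a_i}$ is Cohen--Macaulay of dimension $d-1$; hence $\height I=1$ and, by normality, $IR_\p$ is principal for every minimal prime $\p$ of $I$. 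Then \cite[Corollary (14)]{B} shows that $I$ itself is principal, i.e.\ $I=0$ in $\Cl(R)$, contradicting the assumed infinitude. This is the piece of the argument your proposal is missing.
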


\begin{proof}
Let $\p$ be a nonmaximal prime ideal of $R$.
If $I\nsubseteq\p$, then $IR_\p=R_\p$.
Assume $I\subseteq\p$.
The ideal $IR_\p$ of $R_\p$ is reflexive, and belongs to $\Cl(R_\p)$.
As $R$ is a (normal) domain, $IR_\p$ is seen to be nonzero.
The local ring $R_\p$ is regular as $R$ has an isolated singularity, and $\Cl(R_\p)$ is trivial.
Hence $IR_\p\cong R_\p$.
It follows that the $R$-module $I$ is locally free on the punctured spectrum of $R$.

Let us show the last assertion of the theorem.
Assume that $I$ is torsion in $\Cl(R)$.
Then there is an integer $t>0$ with $tI=0$ in $\Cl(R)$.
The subfiltration $\{I^{it}\}_{i=1}^\infty$ of the $I$-adic filtration $\{I^n\}_{n=1}^\infty$ is cofinal.
For each $i>0$, we have $(I^{it})^\ast=-itI=0$ in $\Cl(R)$ by Remark \ref{22}, which means $(I^{it})^\ast\cong R$.
Thus $\cd I=1$ by Lemma \ref{26}(3).
This contradiction shows that $I$ is not torsion in $\Cl(R)$, which means $\Z I\cong\Z$.

Now, we prove the first assertion of the theorem.
Suppose that there exist infinitely many maximal Cohen--Macaulay points on $\Z I$.
Then we find pairwise distinct integers $a_1,a_2,a_3,\dots$ such that the points $a_iI\in\Cl(R)$ are maximal Cohen--Macaulay.
Take a nonzero reflexive ideal $I_i$ of $R$ which is a maximal Cohen--Macaulay $R$-module and satisfies $I_i=a_iI$ in $\Cl(R)$.
We have $I_i^\ast=-a_iI$ in $\Cl(R)$, and $I_i^\ast$ is also a maximal Cohen--Macaulay $R$-module since $R$ is Gorenstein.
Hence we may assume that $0<a_1<a_2<a_3<\cdots$.
For each $i>0$ there are equalities $I_i^\ast=-a_iI=(I^{a_i})^\ast$ in $\Cl(R)$ by Remark \ref{22}, which shows that the $R$-module $(I^{a_i})^\ast$ is maximal Cohen--Macaulay.
It follows from assertions (1) and (2) of Lemma \ref{26} that $\Ext_R^j(R/I^{a_i},R)=0$ for all integers $i\ge1$ and $2\le j\le d-1$.

(1) As $0<a_1<a_2<a_3<\cdots$, the subfiltration $\{I^{a_i}\}_{i=1}^\infty$ of the $I$-adic filtration $\{I^n\}_{n=1}^\infty$ is cofinal.
By assumption, $R$ is analytically irreducible and $\cd I\ne1$.
Lemma \ref{26}(3) gives rise to a contradiction.
We conclude that there exist only finitely many maximal Cohen--Macaulay points on $\Z I$.

(2) Our assumption that there are infinitely many maximal Cohen--Macaulay points on $\Z I$ forces us to have $I\ne R$.
Since $\lim_{n\to\infty}\depth R/I^n>0$, we have $\depth R/I^n>0$ for all $n\gg0$.
There are infinitely many integers $n>0$ with $\Ext_R^j(R/I^n,R)=0$ for all $2\le j\le d-1$ and $\depth R/I^n>0$.
As $R$ is a domain, we have $\dim R/I^n<d$.
It follows from \cite[Corollary 3.5.9]{BH} that $\H_\m^k(R/I^n)=0$ for all $k\le d-2$.
Thus $R/I^n$ is a Cohen--Macaulay local ring of dimension $d-1$, and we see that $\height I=1$.
Take any minimal prime $\p$ of $I$.
Then $\p/I^n$ is a minimal prime of $R/I^n$.
We have $\dim R/\p=\dim(R/I^n)/(\p/I^n)=\dim R/I^n=d-1$, and hence $\height\p=1$.
As $R$ is normal, $R_\p$ is a discrete valuation ring, and $IR_\p$ is a principal ideal of $R_\p$.
We can apply \cite[Corollary (14)]{B} to deduce that $I$ is a principal ideal of $R$.
Therefore $I=0$ in $\Cl(R)$, which contradicts our assumption that there are infinitely many maximal Cohen--Macaulay points on $\Z I$.
\end{proof}

\begin{rem}
In addition to the assumption of Theorem \ref{8}, suppose that the $\Z$-module $\Cl(R)$ is free.
Then there exist only finitely many maximal Cohen--Macaulay points on $\Z I$ in $\Cl(R)_\R$.
This is an easy consequence of the fact that the scalar extension $\Cl(R)\to\Cl(R)_\R$ is an injective map.
\end{rem}

The proposition below gives sufficient conditions for Theorem \ref{8}(2) to be satisfied.

\begin{prop}\label{10}
Let $(R,\m)$ be a Cohen--Macaulay local ring.
Let $I\ne\m$ be an ideal of $R$ with positive height.
Let $G=\bigoplus_{n\ge0}I^n/I^{n+1}$ be the associated graded ring of $I$.
The following implications hold.
$$
\textstyle
G\text{ is a domain}\implies
\grade\m G>0\iff
\depth R/I^n>0\text{ for all }n>0\implies
\lim_{n\to\infty}\depth R/I^n>0.
$$
\end{prop}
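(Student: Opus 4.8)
The plan is to read every arrow in the chain through the associated graded pieces $G_n=I^n/I^{n+1}$ and the short exact sequences linking them to the quotients $R/I^n$. The crucial preliminary observation is that the extended ideal $\m G$ is generated in degree zero, namely by the images of a generating set of $\m$ inside $G_0=R/I$. Hence multiplying a homogeneous element by $\m G$ coincides with the $\m$-action, and one obtains
$$
(0:_G\m G)=(0:_G\m)=\textstyle\bigoplus_{n\ge0}(0:_{G_n}\m).
$$
Since in a Noetherian ring an ideal has positive grade exactly when it contains a nonzerodivisor, equivalently when its annihilator in the ring vanishes, this identifies $\grade\m G>0$ with the vanishing of each socle $(0:_{G_n}\m)$, i.e.\ with $\depth_R(I^n/I^{n+1})>0$ for every $n\ge0$. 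Granting this translation, the first arrow is immediate: if $G$ is a domain then $\ass_G(G)=\{(0)\}$, and because $I\ne\m$ the ideal $\m G$ is nonzero (it contains the nonzero image in $G_0$ of any $x\in\m\setminus I$), so every nonzero element of $\m G$ is $G$-regular and $\grade\m G\ge1$.

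For the central equivalence I would compare $\depth_R(R/I^n)$ with $\depth_R G_n$ using the short exact sequences $0\to G_n\to R/I^{n+1}\to R/I^n\to0$ together with the depth lemma, noting $G_0=R/I$. If $\depth_R(R/I^n)>0$ for all $n\ge1$, then the bound $\depth_R G_n\ge\min\{\depth_R(R/I^{n+1}),\,\depth_R(R/I^n)+1\}\ge1$ gives $\depth_R G_n>0$ for every $n\ge0$. Conversely, assuming $\depth_R G_n>0$ for all $n\ge0$, I would induct on $n$: the base case is $\depth_R(R/I)=\depth_R G_0>0$, and the inductive step uses $\depth_R(R/I^{n+1})\ge\min\{\depth_R G_n,\,\depth_R(R/I^n)\}\ge1$. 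This settles the ``$\iff$'' in the middle. Positive height of $I$ ensures $I$ is not nilpotent, so all $G_n$ and all $R/I^n$ are nonzero and these depth comparisons never degenerate into the convention $\depth 0=\infty$.

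The last arrow is the softest: by Brodmann's theorem (quoted just before Theorem \ref{8} as \cite[Theorem (2)]{B}) the sequence $\{\depth_R(R/I^n)\}_{n\in\NN}$ is eventually constant, so its limit is attained and equals $\depth_R(R/I^N)$ for $N\gg0$; as every term is a positive integer, the limit is positive. I expect the genuine care to lie in the degreewise computation of $\grade\m G$ in the first step: one must verify that passing from the ring-theoretic grade of $\m G$ to the module-theoretic depths of the graded pieces is legitimate, i.e.\ that $(0:_G\m G)$ really splits as the direct sum of the socles $(0:_{G_n}\m)$ and that ``grade zero'' detects a nonzero such socle. Once this translation is secured, the two implications and the middle equivalence follow routinely from the depth lemma and Brodmann's theorem as above.
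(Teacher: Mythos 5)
Your proof is correct, and for the outer two arrows it runs exactly as the paper's does: the paper likewise observes that $\m G=\m/I\oplus\m I/I^2\oplus\m I^2/I^3\oplus\cdots$ is a nonzero ideal of $G$ because $I\ne\m$, so that the domain hypothesis immediately gives $\grade\m G>0$, and it treats the final arrow as evident, resting on the same fact you invoke, namely that $\lim_{n\to\infty}\depth R/I^n$ exists by \cite[Theorem (2)]{B} and so is attained by a sequence of integers that are all positive. The genuine difference is the central equivalence: the paper disposes of it in one line by citing \cite[Proposition (9.23)]{BV}, whereas you reprove it from scratch. Your mechanism --- note that $\m G$ is generated by the degree-zero images of generators of $\m$, so that $(0:_G\m G)=\bigoplus_{n\ge0}(0:_{G_n}\m)$ is graded; use prime avoidance in the noetherian ring $G$ to read $\grade\m G>0$ as the vanishing of this annihilator, hence as $\depth_R(I^n/I^{n+1})>0$ for all $n\ge0$; then pass between the graded pieces and the quotients $R/I^n$ by applying the depth lemma in both directions to $0\to I^n/I^{n+1}\to R/I^{n+1}\to R/I^n\to0$ --- is sound, and the two points you single out as delicate (gradedness of the annihilator, and ``zero annihilator if and only if the ideal contains a nonzerodivisor'' over a noetherian ring) are precisely the right ones and are handled correctly. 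What the paper's citation buys is brevity; what your argument buys is self-containedness: it makes visible why grade on $G$ is detected degreewise on socles and then transported to the powers $I^n$, in effect supplying a proof of the quoted Bruns--Vetter result rather than assuming it. Your parenthetical uses of the positive-height hypothesis (to keep all $G_n$ and $R/I^n$ nonzero) and of eventual constancy in Brodmann's theorem are both harmless; with the convention $\depth 0=\infty$ the inequalities go through regardless, and for the last arrow mere existence of the limit of a sequence of integers $\ge1$ already forces the limit to be $\ge1$.
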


\begin{proof}
We have $\m G=\m/I\oplus\m I/I^2\oplus\m I^2/I^3\oplus\cdots$.
As $I\ne\m$, the ideal $\m G$ of the ring $G$ is nonzero.
The first implication immediately follows from this.
The equivalence is a consequence of \cite[Proposition (9.23)]{BV}.
The last implication is evident.
\end{proof}

\begin{ex}\label{31}
Let $k$ be a field.
\begin{enumerate}[(1)]
\item
Let $R,\p$ be as in Example \ref{27}.
Recall that $\Cl(R)=\Z\p\cong\Z$.

(a) Letting $G$ be the associated graded ring of $\p$, we have $G=(R/\p)[\p/\p^2]\cong k[\![Z,W]\!][X,Y]/(XW-YZ)$, which is a domain.
Proposition \ref{10} implies $\lim_{n\to\infty}\depth R/\p^n\ne0$.

(b) Put $J=(z,w)R$ and $\m=(x,y,z,w)R$.
Then $\p+J=\m$.
The exact sequence $0\to J\to R\to R/J\to0$ induces an exact sequence $\H_\p^2(R)\to\H_\p^2(R/J)\to\H_\p^3(J)$.
We have $\H_\p^2(R/J)\cong\H_\m^2(R/J)\ne0$ as $\dim R/J=2$.
Since $\p$ is $2$-generated, $\H_\p^3(J)=0$.
It is seen that $\H_\p^2(R)\ne0$, and therefore $\cd \p\ne1$.

Thus $R$ satisfies both (1) and (2) of Theorem \ref{8}.
It follows that there are only finitely many maximal Cohen--Macaulay points on $\Z \p=\Cl(R)$.
\item
Consider the determinantal ring $A=k[X]/\I_2(X)$, where $X=(x_{ij})$ is a $3\times3$ generic matrix.
Set $P=(x_{11},x_{12},x_{13})A$.
Let $R=k[\![X]\!]/\I_2(X)$ be the completion of $A$, and put $\p=PR$.
Using \cite[Theorems 7.3.1(c), 7.3.5, 7.3.6(b) and Proposition 7.3.4]{BH} and \cite[Remarks (8.5)]{BV}, we observe that $R$ is a $5$-dimensional Gorenstein complete local ring with an isolated singularity which is not a complete intersection, $\p$ is a prime ideal of height $1$ and $\Cl(R)\cong\Cl(A)=\Z P\cong\Z$.

(a) By \cite[Exercise 7.3.9]{BH} it holds for all $n>0$ that $P^{(n)}=P^n$, and hence $\ass A/P^n=\{P\}$.
It is easy to see that $\depth R/\p^n>0$ for all $n>0$.
Therefore $\lim_{n\to\infty}\depth R/\p^n\ne0$.

(b) Put $J=(x_{21},x_{22},x_{23},x_{31},x_{32},x_{33})R$.
The ideal $\p+J$ coincides with the maximal ideal $\m$.
An exact sequence $\H_\p^3(R)\to\H_\p^3(R/J)\to\H_\p^4(J)$ is induced.
As $\p$ is $3$-generated, it holds that $\H_\p^4(J)=0$.
Since $\dim R/J=3$, we have $\H_\p^3(R/J)=\H_\m^3(R/J)\ne0$.
Thus $\H_\p^3(R)\ne0$, and in particular, $\cd\p\ne1$.

Consequently, the ring $R$ satisfies both (1) and (2) of Theorem \ref{8}, and there exist only finitely many maximal Cohen--Macaulay points on $\Z\p=\Cl(R)$.
Note that, since it is not a hypersurface, $R$ has infinite Cohen--Macaulay representation type by \cite[Theorem (8.15)]{Y}.
\item
Let $S=k[x,y,z,w,v]/(xy+z^2+w^2+v^2,xv+yv+zw)$ be a homogeneous $k$-algebra, and let $\nn=(x,y,z,w,v)$ be the graded maximal ideal of $S$.
Assume $\ch k=0$.
Denoting by $\jac S$ the Jacobian ideal of $S$, we observe $S/\jac S$ is an artinian ring.
The Jacobian criterion shows that $S$ has an isolated singularity at $\nn$.
Let $R=k[\![x,y,z,w,v]\!]/(xy+z^2+w^2+v^2,xv+yv+zw)$ be the completion of $S$.
Then $R$ is a $3$-dimensional complete intersection with an isolated singularity.
As the a-invariant of $S$ is $-1$, it is rational by the Flenner--Watanabe theorem \cite{Fl,W}.
Hence $R$ has a rational singularity.
Let $\p=(w,v)$ be an ideal of $R$.
Then $\p$ is a prime ideal of height $1$, so $\p\in\Cl(R)$.

(a) By \cite[Theorem (1.1)]{GS} the associate graded ring $G$ of $\p$ is Gorenstein, and in particular, it is Cohen--Macaulay.
Hence $\grade\m G=\dim G-\dim G/\m G=3-\lambda(\p)$, where $\m$ is the maximal ideal of $R$.
It is seen that $w,v$ is a d-sequence.
By \cite[Corollary 5.5.5 and Exercises 8.22]{HS}, we get $\lambda(\p)=\nu(\p)=2$.
It follows that $\grade\m G=1>0$, and $\lim_{n\to\infty}\depth R/\p^n\ne0$ by Proposition \ref{10}.

(b) Set $J=(x+y,z)$.
We have $\sqrt{\p+J}=\m$, $\dim R/J=2$ and there is an exact sequence $\H_\p^2(R)\to\H_\p^2(R/J)\to\H_\p^3(J)$.
A similar argument as in (1b) shows $\H_\p^2(R)\ne0$, whence $\cd\p\ne1$.

Thus, the local ring $R$ satisfies both (1) and (2) of Theorem \ref{8}, and it follows that there exist only finitely many maximal Cohen--Macaulay points on $\Z\p\cong\Z$.
\item
Let $S=k[x,y,z,w,v]/(x^3+y^3+zwv,xy+z^2+w^2+v^2)$ be a homogeneous $k$-algebra, and assume $\ch k=0$.
Similarly to (3), $S$ is not rational (as it has a-invaiant $0$) but has an isolated singularity.
Let $R=k[\![x,y,z,w,v]\!]/(x^3+y^3+zwv,xy+z^2+w^2+v^2)$ be the completion.
Then $R$ is a $3$-dimensional complete intersection with an isolated singularity.
Set $\p=(x+y,z)R$.
This is a prime ideal of $R$ with height $1$, and hence $\p\in\Cl(R)$.
Let $\m=(x,y,z,w,v)R$ be the maximal ideal of $R$.

(a) An argument analogous to (3a) shows $\grade\m G=1>0$, where $G$ is the associate graded ring of $\p$.
Proposition \ref{10} implies that $\lim_{n\to\infty}\depth R/\p^n\ne0$.

(b) Put $J=(x^2-xy+y^2,v)R$.
There is an exact sequence $\H_\p^2(R)\to\H_\p^2(R/J)\to\H_\p^3(J)$.
An analogous argument as in (1b) shows $\H_\p^2(R)\ne0$, and hence $\cd\p\ne1$.

Thus $R$ satisfies both (1) and (2) of Theorem \ref{8}, and therefore there exist only finitely many maximal Cohen--Macaulay points on $\Z \p\cong\Z$.
\item
Let $A=k[x_1,x_2,x_3,x_4,x_5]/(x_1^3+x_2^3+x_3^3+x_4^3+x_5^3)$ and $B=k[y_1,y_2]$ be homogeneous $k$-algebras.
Let $C=A\#B$ be the Segre product and $R$ its completion with respect to the irrelevant maximal ideal.
Then $A,B$ have a-invariant $-2$.
It follows from \cite[Theorems (4.2.3), (4.4.4) and (4.4.7)]{GW} that $C$ is a $5$-dimensional Gorenstein graded ring which is not a complete intersection.
Hence $R$ is a $5$-dimensional Gorenstein complete local ring which is not a complete intersection.

We have $\Proj C\cong\Proj A\times\Proj B$, and $\Proj C$ is regular as so are $\Proj A$ and $\Proj B$.
Hence the local ring $R$ has an isolated singularity.
Let $P=(x_1y_1,x_1y_2)C$ and $\p=PR$.
Then $C/P=(A/x_1A)\#B$ is a $4$-dimensional Cohen--Macaulay ring by \cite[Theorem (4.2.3) and (4.4.4)]{GW}, and $\Proj(C/P)\cong\Proj(A/x_1A)\times\Proj B$ is nonsingular.
Hence $R/\p$ is a $4$-dimensional Cohen--Macaulay local ring with an isolated singularity, and in particular, it is a domain.
Thus $\p$ is a prime ideal of $R$ with height one, and hence $\p\in\Cl(R)$.

(a) Let $\m$ be the maximal ideal of $R$.
A similar argument as in (3a) shows that $\grade\m G=\dim G-\lambda(\p)=5-2=3$, where $G$ is the associated graded ring of $\p$, and we get $\lim_{n\to\infty}\depth R/\p^n\ne0$.

(b) Set $J=(x_3,x_4,x_5)A$, $K=J\#B$ and $D=(A/J)\#B$.
The exact sequence $0\to K\to C\to D\to0$ of $C$-modules (see \cite[Remark (4.0.3)]{GW}) induces an exact sequence $\H_P^2(C)\to\H_P^2(D)\to\H_P^3(K)$.
We have $\H_P^3(K)=0$ as $\nu(P)=2$.
It is easy to see that $PD$ is a primary ideal of $D$ whose radical coincides with the irrelevant maximal ideal of $D$.
As $\dim D=2$ by \cite[Theorem (4.2.3)]{GW}, we have $\H_P^2(D)\ne0$.
The above exact sequence shows $\H_P^2(C)\ne0$, which implies $\H_\p^2(R)\ne0$.
Thus $\cd\p\ne1$.

Consequently, $R$ satisfies both (1) and (2) of Theorem \ref{8}, and it follows that there exist only finitely many maximal Cohen--Macaulay points on $\Z\p\cong\Z$.
\end{enumerate}
\end{ex}

\begin{rem}
Bruns and Gubeladze \cite[Corollary 4.3.2]{BG} prove that there are only finitely many maximal Cohen--Macaulay points in the divisor class group of a normal semigroup ring.
The reader may wonder if this applies to get the conclusion of (3) and (4) of Example \ref{31}, but it does not.
Indeed, in each statement, the defining ideal of the ring $S$ is not a binomial ideal, and hence $S$ is not toric, so that one cannot apply \cite[Corollary 4.3.2]{BG} to the ring $S$.
\end{rem}

For each homomorphism $h:X\to R$ of $R$-modules, we denote by $h'$ the homomorphism $R\to X^\ast$ of $R$-modules given by $1\mapsto h$.
To show our next result, we establish a lemma, which may be well-known.

\begin{lem}\label{60}
Let $I$ be a reflexive ideal of $R$ with positive grade.
Suppose that $I^\ast$ is generated by two elements $f$ and $g$.
Then the sequence $0\to I\xrightarrow{\binom{-g}{f}}R^{\oplus2}\xrightarrow{(f',g')}I^\ast\to0$ is exact.
\end{lem}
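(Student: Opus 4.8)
The plan is to establish the three properties that together give exactness: injectivity of the left-hand map, surjectivity of the right-hand map, and exactness in the middle; throughout I would lean on the two standing hypotheses, namely that $I$ is reflexive (so the biduality map $\delta\colon I\to I^{\ast\ast}=\Hom_R(I^\ast,R)$ sending $x$ to evaluation at $x$ is an isomorphism) and that $I$ has positive grade (so $I$ contains a non-zerodivisor $t$). Unwinding the definitions, the map $(f',g')$ sends $(a,b)\mapsto af+bg$, so its surjectivity is immediate from the hypothesis that $f,g$ generate $I^\ast$; and $\binom{-g}{f}$ sends $x\mapsto(-g(x),f(x))$, which is injective because $f(x)=g(x)=0$ forces $h(x)=0$ for every $h\in I^\ast=Rf+Rg$, hence $\delta(x)=0$ and $x=0$.

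The next step is to check that the sequence is a complex. The key elementary identity is $t\,h(x)=x\,h(t)$ for all $h\in I^\ast$ and $x\in I$, valid because $tx=xt$ and $R$-linearity allows pulling out either factor as a scalar. Applying it to $f$ and $g$ yields $t^{2}\bigl(f(x)g(y)-g(x)f(y)\bigr)=xy\bigl(f(t)g(t)-g(t)f(t)\bigr)=0$ for all $x,y\in I$; cancelling the non-zerodivisor $t^{2}$ gives $f(x)g=g(x)f$ in $I^\ast$, which says exactly that the composite $(f',g')\circ\binom{-g}{f}$ is zero.

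The hard part will be the reverse inclusion in the middle, that the kernel of $(f',g')$ lies in the image of $\binom{-g}{f}$. I would isolate a determinant identity: if $(a,b)$ and $(c,d)$ both lie in the kernel, so $af+bg=0=cf+dg$, then substituting $af=-bg$ and $cf=-dg$ gives $(ad-bc)f=(ad-bc)g=0$, so $ad-bc$ annihilates $I^\ast$; since the inclusion $I\hookrightarrow R$ is itself an element of $I^\ast$, evaluating at $t$ forces $(ad-bc)t=0$ and hence $ad=bc$. With this in hand, for a fixed kernel element $(a,b)$ I would define $\xi\colon I^\ast\to R$ by $\xi(cf+dg)=cb-da$; the determinant identity applied to $(a,b)$ and the kernel element measuring the non-uniqueness of an expression $cf+dg$ is precisely what makes $\xi$ well defined, hence an $R$-homomorphism. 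By reflexivity $\xi=\delta(x)$ for some $x\in I$, and then $f(x)=\xi(f)=b$ and $g(x)=\xi(g)=-a$, so $\binom{-g}{f}(x)=(a,b)$. Together with the complex property, this yields exactness in the middle and completes the proof.
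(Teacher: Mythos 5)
Your proof is correct and takes essentially the same approach as the paper: the crucial middle-exactness step---defining a functional on $I^\ast$ from a kernel element $(a,b)$, proving it well defined by the determinant trick combined with positive grade, and then realizing it as evaluation at some $x\in I$ by reflexivity so that $(a,b)=\binom{-g}{f}(x)$---is exactly the paper's argument (your $\xi$ is the paper's $-\psi$), and surjectivity and injectivity are handled identically. The only cosmetic differences are that you verify the composite vanishes by cancelling the non-zerodivisor $t^2$ where the paper uses the symmetry $g(x)y=g(xy)=g(y)x$ directly, and that you pass from $(ad-bc)I^\ast=0$ to $(ad-bc)I=0$ via the inclusion $I\hookrightarrow R$ viewed as an element of $I^\ast$ rather than via reflexivity.
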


\begin{proof}
Clearly, $(f',g')$ is surjective.
For $x,y\in I$ we have $(-g(x)f+f(x)g)(y)=-g(x)f(y)+f(x)g(y)=-f(g(x)y)+f(g(y)x)=-f(g(xy))+f(g(xy))=0$, which shows $(f',g')\binom{-g}{f}=0$.
The map $\phi:I\to I^{\ast\ast}$ given by $\phi(x)(h)=h(x)$ for $x\in I$ and $h\in I^\ast$ is an isomorphism.
If $x\in I$ is such that $f(x)=g(x)=0$, then $h(x)=0$ for all $h\in I^\ast$, which means $\phi(x)=0$, and hence $x=0$.
This shows the injectivity of $\binom{-g}{f}$.

Let $a,b\in R$ be such that $af+bg=0$.
Define $\psi:I^\ast\to R$ by $\psi(pf+qg)=aq-bp$ for $p,q\in R$.
Assume that $pf+qg=rf+sg$ for $p,q,r,s\in R$.
Then $A\binom{f}{g}=\binom{0}{0}$, where $A:=\left(\begin{smallmatrix}a&b\\p-r&q-s\end{smallmatrix}\right)$.
Multiplying it by the adjugate of the matrix $A$ gives $t\binom{f}{g}=\binom{0}{0}$, where $t:=a(q-s)-b(p-r)$.
Hence $tI^\ast=0$, and $tI=0$ as $I$ is reflexive.
Since $I$ has positive grade, we get $t=0$ and $aq-bp=as-br$.
This shows that $\psi$ is a well-defined map.
As $\psi$ belongs to $I^{\ast\ast}$, we find $z\in I$ such that $\psi=\phi(z)$.
Hence $a=\psi(g)=g(z)$ and $b=\psi(-f)=-f(z)$.
We obtain $\binom{a}{b}=\binom{g(z)}{-f(z)}=\binom{-g}{f}(-z)$.
Now the proof of the lemma is completed.
\end{proof}

An ideal $I$ of $R$ is called {\em Gorenstein} if the quotient ring $R/I$ is Gorenstein.
Now we state and show the second main result in this section.

\begin{thm}\label{3}
Let $R$ be a Gorenstein normal local ring of dimension $d$.
Let $I$ be a non-principal Gorenstein ideal of $R$ with height $1$.
Assume $I$ is rigid (this is the case if $d\ge3$ and $I$ is locally free on the punctured spectrum of $R$).
Then the point $nI\in\Cl(R)$ with $n\in\Z$ is maximal Cohen--Macaulay if and only if $n=-1,0,1$.
\end{thm}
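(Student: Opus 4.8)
The plan is to phrase everything in terms of the reflexive powers $(I^n)^{\ast\ast}$, which are the unique reflexive ideals of rank one representing the classes $nI$; thus $nI$ is a maximal Cohen--Macaulay point if and only if $(I^n)^{\ast\ast}$ is maximal Cohen--Macaulay. Since $R$ is Gorenstein, $(-)^\ast$ sends maximal Cohen--Macaulay modules to maximal Cohen--Macaulay modules and negates classes, so $nI$ is a maximal Cohen--Macaulay point exactly when $-nI$ is; hence it suffices to treat $n\ge0$. For the ``if'' part, $0\cdot I=[R]$ is trivial, $1\cdot I=[I]$ is maximal Cohen--Macaulay because $0\to I\to R\to R/I\to0$ together with the Gorensteinness (hence Cohen--Macaulayness of dimension $d-1$) of $R/I$ gives $\depth I=d$ by the depth lemma, and $-I=[I^\ast]$ follows by duality. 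So the content is the ``only if'' direction: $(I^n)^{\ast\ast}$ is not maximal Cohen--Macaulay for $n\ge2$.

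The first structural step is to produce $(\ast)\colon 0\to I\to R^{\oplus2}\to I^\ast\to0$. Dualizing $0\to I\to R\to R/I\to0$ and using $\Ext_R^1(R/I,R)\cong R/I$ (Gorensteinness of $R/I$) yields $0\to R\to I^\ast\to R/I\to0$; as $R/I$ is cyclic this shows $I^\ast$ is generated by two elements, and Lemma \ref{60} then gives $(\ast)$. Applying $\Hom_R(I,-)$ to $(\ast)$ and invoking rigidity $\Ext_R^1(I,I)=0$ (with $\Ext_R^1(I,R)=0$), after identifying $\Hom_R(I,I^\ast)\cong(I^2)^\ast$, produces the engine of the argument,
\[
0\to R\to(I^\ast)^{\oplus2}\to(I^2)^\ast\to0.
\]

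The base case $n=2$ I would settle directly. If $(I^2)^{\ast\ast}$, equivalently $(I^2)^\ast$, were maximal Cohen--Macaulay, then dualizing the displayed sequence (now a sequence of totally reflexive modules, so that $(-)^\ast$ is exact) gives $0\to(I^2)^{\ast\ast}\to I^{\oplus2}\to R\to0$. The surjection onto the free module $R$ splits, so $R$ is a direct summand of $I^{\oplus2}$: there are $a,b\in I$ and $\phi,\psi\in I^\ast$ with $\phi(a)+\psi(b)=1$. As $R$ is local, one of these values, say $\phi(a)$, is a unit, so $a$ spans a free direct summand of $I$; but $I$ has rank one and is torsion-free, hence indecomposable, forcing $I\cong R$ and contradicting non-principality. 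This rules out $n=2$ (and $n=-2$), and incidentally forces $2I\neq0$.

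For $n\ge3$ the plan is a descent reducing maximal Cohen--Macaulayness of $(I^n)^{\ast\ast}$ to that of $(I^{n-1})^{\ast\ast}$, and ultimately to the forbidden case $n=2$. The tool is the three--term sequence
\[
0\to(I^{n-1})^\ast\to\bigl((I^n)^\ast\bigr)^{\oplus2}\to(I^{n+1})^\ast\to0,
\]
obtained by applying $\Hom_R(-,(I^n)^{\ast\ast})$ to $(\ast)$ and identifying the outer terms with the appropriate reflexive powers. Granting exactness, if $(I^n)^\ast$ is maximal Cohen--Macaulay then the middle term is too, while the reflexive quotient $(I^{n+1})^\ast$ has depth at least $2$; the depth lemma applied to the submodule then gives $\depth(I^{n-1})^\ast\ge\min\bigl(d,\depth(I^{n+1})^\ast+1\bigr)$, which is $d$ as soon as $\depth(I^{n+1})^\ast\ge d-1$, and this closes the descent when $d=3$, where reflexivity already supplies depth $\ge2=d-1$. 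The main obstacle is precisely the exactness of these sequences, i.e. the vanishing of the connecting map $\Ext_R^1(I^\ast,(I^n)^\ast)$: this is not a formal consequence of the single hypothesis $\Ext_R^1(I,I)=0$, for already the representative group $\Ext_R^1(I,I^\ast)\cong\Ext_R^2(I,I)$ need not vanish. Overcoming this is the heart of the proof; I expect to do so by exploiting the self-duality $\Ext_R^1(R/I,R)\cong R/I$ and the fact that every reflexive power $(I^n)^{\ast\ast}$ is again a rigid maximal Cohen--Macaulay module locally free on the punctured spectrum, so that the requisite surjectivity (hence rigidity) propagates up the tower and the depth--lemma descent can be carried out.
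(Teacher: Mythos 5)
Your handling of $n=0,\pm1,\pm2$ is correct, and your exclusion of $n=\pm2$ is in fact more elementary than the paper's: the paper also derives the exact sequence $0\to R\to (I^\ast)^{\oplus2}\to(I^2)^\ast\to 0$ and, assuming $(I^2)^\ast$ maximal Cohen--Macaulay, splits it, but then passes to the completion and invokes Krull--Schmidt together with \cite[Corollary 1.15]{LW}; your unit trick (from $\phi(a)+\psi(b)=1$ one value is a unit, so $R$ splits off the indecomposable rank-one torsion-free module $I$) stays over $R$ itself. The genuine gap is exactly where you flag it, and it is fatal to the proposal as written: for $|n|\ge3$ your descent sequences $0\to(I^{n-1})^\ast\to((I^n)^\ast)^{\oplus2}\to(I^{n+1})^\ast\to0$ require the vanishing of $\Ext_R^1(I^\ast,(I^n)^{\ast\ast})$, which, as you correctly observe, does not follow from $\Ext_R^1(I,I)=0$; and your proposed remedy --- that rigidity ``propagates up the tower'' of reflexive powers --- is supported by nothing in the hypotheses (which allow $d=2$ and make no assumption on the powers of $I$) and is given no proof. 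Moreover, even granting exactness, your depth-lemma step places the unknown module in the submodule position against a quotient of depth merely $\ge2$, so it only yields $\depth(I^{n-1})^\ast\ge\min(d,3)$; the descent closes for $d\le3$ but says nothing for $d\ge4$, while the theorem is asserted in all dimensions.

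The idea you are missing is to never dualize into a moving target at all, but to apply the covariant functor $\Hom_R(I,-)$ only to short exact sequences whose kernel is the fixed module $I$, so that the one and only obstruction ever encountered is $\Ext_R^1(I,I)=0$, i.e.\ the rigidity hypothesis itself. Write $(m)$ for the reflexive rank-one module of class $mI$, and recall that Lemma \ref{60} exhibits $(\ast)$ as $0\to(1)\xrightarrow{\binom{-g}{f}}(0)^{\oplus2}\xrightarrow{(f',g')}(-1)\to0$. Applying $\Hom_R(I,-)$ to $(\ast)$ gives your sequence $0\to(0)\xrightarrow{\binom{-g'}{f'}}(-1)^{\oplus2}\to(-2)\to0$; now paste it with $(\ast)$ along the common arrow $f'\colon(0)\to(-1)$, composing the two pullback--pushout (``exact'') squares as in \cite[Lemma 2.2]{trans}, to obtain $0\to(1)\to(0)\oplus(-1)\to(-2)\to0$, whose kernel is again $(1)=I$. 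Iterating --- apply $\Hom_R(I,-)$, then paste with $(\ast)$ --- produces for every $n\ge1$ an exact sequence $0\to(1)\to(0)\oplus(-n+1)\to(-n)\to0$, with $\Ext_R^1(I,I)=0$ the only input at each stage. These sequences also repair the dimension restriction: under the contradiction hypothesis both the submodule $(1)$ and the quotient $(-n)$ are maximal Cohen--Macaulay, so the depth lemma forces the middle term, hence $(-n+1)$, to be maximal Cohen--Macaulay in every dimension $d$, and the descent runs all the way down to $(-2)\in\cm(R)$ --- the case your own argument already rules out.
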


\begin{proof}
The assertion of the theorem is evident in the case where $I$ is a principal ideal of $R$.
So let us assume that $I$ is not a principal ideal.
In particular, we have $d\ge2$.
As $R/I$ is a Cohen--Macaulay local ring of dimension $d-1$, by the depth lemma the $R$-module $I$ is maximal Cohen--Macaulay.
Hence $I$ is a reflexive ideal, and belongs to $\Cl(R)$.
Note that $\Hom_R(I,I)\cong R$.
If $d\ge3$ and $I$ is locally free on the punctured spectrum of $R$, then $I$ is rigid by Lemma \ref{4}.
Since $R/I$ is a $(d-1)$-dimensional Gorenstein local ring, we have $\Ext_R^1(R/I,R)\cong R/I$.
Dualizing by $R$ the natural exact sequence $0\to I\to R\to R/I\to0$, we get an exact sequence $0\to R\to I^\ast\xrightarrow{f}R/I\to0$.
Taking the pullback diagram of the map $f$ and the natural surjection $R\to R/I$, we obtain an exact sequence $0\to I\xrightarrow{\alpha}R^{\oplus2}\xrightarrow{\beta}I^\ast\to0$.
Using Lemma \ref{60}, we can write $\alpha=\binom{-g}{f}$ and $\beta=(f',g')$ with $f,g\in I^\ast$.

In what follows, we denote by $(n)$ a reflexive $R$-module $M$ of rank $1$ with $M=nI$ in $\Cl(R)$.
Hence there is an exact sequence $0\to(1)\xrightarrow{\binom{-g}{f}}(0)^{\oplus2}\xrightarrow{(f',g')}(-1)\to0$.
Applying $\Hom_R((1),-)$ gives an exact sequence
\begin{equation}\label{17}
0\to(0)\xrightarrow{\binom{-g'}{f'}}(-1)^{\oplus2}\to(-2)\to0,
\end{equation}
since $\Ext_R^1((1),(1))=0$ by the assumption that $I$ is rigid.
These two sequences make exact squares in the lower left, which give the exact square in the lower right; see \cite[Definition 2.1 and Lemma 2.2]{trans}.
$$
\xymatrix{
(1)\ar[r]^f\ar[d]^g & (0)\ar[r]^{f'}\ar[d]^{g'} & (-1)\ar[d] \\
(0)\ar[r]^{f'} & (-1)\ar[r] & (-2)
}\qquad\qquad\qquad
\xymatrix{
(1)\ar[r]^{f'f}\ar[d]^g & (-1)\ar[d] \\
(0)\ar[r] & (-2)
}
$$
Hence there is an exact sequence $0\to(1)\xrightarrow{\binom{-g}{f'f}}(0)\oplus(-1)\to(-2)\to0$.
Applying $\Hom_R((1),-)$ again gives an exact sequence $0\to(0)\xrightarrow{\binom{-g'}{(f'f)'}}(-1)\oplus(-2)\to(-3)\to\Ext_R^1((1),(1))=0$.
Again we get exact squares
$$
\xymatrix{
(1)\ar[r]^f\ar[d]^g & (0)\ar[r]^{(f'f)'}\ar[d]^{g'} & (-2)\ar[d] \\
(0)\ar[r]^{f'} & (-1)\ar[r] & (-3)
}\qquad\qquad\qquad
\xymatrix{
(1)\ar[r]^{(f'f)'f}\ar[d]^g & (-2)\ar[d] \\
(0)\ar[r] & (-3)
}
$$
and obtain an exact sequence $0\to(1)\xrightarrow{\binom{-g}{(f'f)'f}}(0)\oplus(-2)\to(-3)\to0$.
Iterating this procedure, we observe that for all integers $n\ge1$ there exists an exact sequence
\begin{equation}\label{5}
0\to(1)\xrightarrow{\binom{-g}{f^{(n)}}}(0)\oplus(-n+1)\to(-n)\to0,
\end{equation}
where $f^{(n)}$ stands for the map inductively defined by $f^{(1)}=f$ and $f^{(i)}=(f^{(i-1)})'f$ for $i\ge2$.

Suppose that there exists an integer $m\notin\{-1,0,1\}$ such that $(m)\in\cm(R)$.
Then $(-m)=(m)^\ast\in\cm(R)$, and so we may assume $m>0$, whence $m\ge2$.
It follows from \eqref{5} that $(-n)\in\cm(R)$ implies $(-n+1)\in\cm(R)$.
We inductively observe that $(-2)\in\cm(R)$.
The exact sequence \eqref{17} then splits since $\Ext_R^1(\cm(R),R)=0$, which yields an isomorphism $(0)\oplus(-2)\cong(-1)^{\oplus2}$.
Taking $(-)^\ast$, we get $(0)\oplus(2)\cong(1)^{\oplus2}$.
Completing this isomorphism and using the Krull--Schmidt theorem, we see that $\widehat I\cong\widehat R$, which implies $I\cong R$ by \cite[Corollary 1.15]{LW}.
Thus $I$ is a principal ideal of $R$, contrary to our assumption.
We conclude that the only integers $n$ satisfying $(n)\in\cm(R)$ are $-1,0,1$.
\end{proof}

\begin{rem}
In addition to the assumption of Theorem \ref{3}, suppose that $\Cl(R)$ is free.
Then the maximal Cohen--Macaulay points on $\Z I\subseteq\Cl(R)_\R$ are $0,I,-I$ as the map $\Cl(R)\to\Cl(R)_\R$ is injective.
\end{rem}

\begin{ex}\label{19}
\begin{enumerate}[(1)]
\item
Let $k,R,\p$ be as in Example \ref{27}.
Then $\Cl(R)=\Z\p$.
Theorem \ref{3} shows that the maximal Cohen--Macaulay points in $\Cl(R)$ are $0,\p,-\p$.
By \cite[(9.9) and Theorem (12.10)]{Y}, when $k$ is algebraically closed and has characteristic $0$, all the indecomposable maximal Cohen--Macaulay $R$-modules have rank $1$, whence $R,\p,\p^\ast$ are the indecomposable maximal Cohen--Macaulay $R$-modules.
\item
Let $R=k[\![x,y,z,w]\!]/(x^2w-yz)$ with $k$ a field.
Put $\p=(x,y)$, $\q=(x,y,z)$ and $\m=(x,y,z,w)$.
Suppose that $k$ is perfect.
Then $\sing R=\v(\q)=\{\q,\m\}$ by the Jacobian criterion, and it is seen that $R$ does not have an isolated singularity but is normal.
We have $\p^2=(x^2,xy,y^2)=(x^2,y)\cap(x^2,xy,y^2,z)$, which implies $\p^{(2)}=(x^2,y)$, and hence $\p^{(2)}$ is Gorenstein.
As $\p^{(2)}R_\q\cong R_\q$, the ideal $\p^{(2)}$ is locally free on the punctured spectrum of $R$.
Applying Theorem \ref{3} to $\p^{(2)}$, we see that the maximal Cohen--Macaulay points on $2\Z\p$ in $\Cl(R)$ are $0,\pm2\p$.
\end{enumerate}
\end{ex}

\begin{rem}
Let $R,\p$ be as in Example \ref{19}(2).
Suppose that Theorem \ref{3} holds for the Gorenstein ideal $\p$ of height one.
Then the maximal Cohen--Macaulay points on $\Z\p$ in $\Cl(R)$ are $0,\pm\p$.
However, we know that $\Z\p\ni2\p=\p^{(2)}=(x^2,y)$ is maximal Cohen--Macaulay, which is a contradiction.

In fact, Theorem \ref{3} cannot be applied to the ideal $\p$, since $\p$ is not rigid.
This can be checked as follows.
The minimal free resolution of the $R$-module $\p$ is the exact sequence in the lower left, and $\Ext_R^1(\p,\p)$ is the first cohomology of the complex in the lower right.
$$
0\gets\p\xleftarrow{\left(\begin{smallmatrix}
x&y
\end{smallmatrix}\right)}R^{\oplus2}\xleftarrow{\left(\begin{smallmatrix}
y&xw\\
-x&-z
\end{smallmatrix}\right)}R^{\oplus2}\xleftarrow{\left(\begin{smallmatrix}
-z&-xw\\
x&y
\end{smallmatrix}\right)}R^{\oplus2}\gets\cdots,\ 
(0\to\p^{\oplus2}\xrightarrow{\left(\begin{smallmatrix}
y&-x\\
xw&-z
\end{smallmatrix}\right)}\p^{\oplus2}\xrightarrow{\left(\begin{smallmatrix}
-z&x\\
-xw&y
\end{smallmatrix}\right)}\p^{\oplus2}\to\cdots).
$$
It is easy to verify that the vector $\binom{y}{xw}\in\p^{\oplus2}$ is a cycle which is not a boundary.
Hence $\Ext_R^1(\p,\p)\ne0$.
This argument assures that the rigidity assumption in Theorem \ref{3} is indispensable.
\end{rem}

The following result gives some specific information about maximal Cohen--Macaulay points on lines in $\Cl(R)$ of the form $J+\Z I$.

\begin{prop}
Let $R$ be a Gorenstein normal local ring of dimension $d\ge3$.
Let $I,J$ be Gorenstein ideals of $R$ with height one.
Suppose that $I$ is locally free on the punctured spectrum of $R$.
\begin{enumerate}[\rm(1)]
\item
If the point $J+nI\in\Cl(R)$ is maximal Cohen--Macaulay for some integer $n\ge0$ (resp. $n\le0$), then so is $J+iI$ for all integers $0\le i\le n$ (resp. $n\le i\le0$).
\item
The points $J+2I,\,J-2I$ in $\Cl(R)$ cannot simultaneously be maximal Cohen--Macaulay, unless $I=0$ in $\Cl(R)$ (i.e., unless $I$ is principal).
\end{enumerate}
\end{prop}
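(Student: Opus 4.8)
The plan is to transplant the argument of Theorem \ref{3} from the line $\Z I$ to its $J$-translate. First dispose of the trivial case: if $I$ is principal both assertions are immediate, so assume $I$ non-principal; then $d\ge 2$, and as $d\ge3$ and $I$ is locally free on the punctured spectrum, Lemma \ref{4} (with $M=N=I$, $\Hom_R(I,I)\cong R$) gives $\Ext_R^1(I,I)=0$, i.e. $I$ is rigid. Thus $I$ meets all hypotheses of Theorem \ref{3}, and the fundamental sequence $0\to I\to R^{\oplus2}\to I^\ast\to0$ (as in the proof of Theorem \ref{3}, via Lemma \ref{60}) together with its consequences \eqref{5} and \eqref{17} are available. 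For $k\in\Z$ let $\langle k\rangle$ denote the reflexive module of rank one with $\langle k\rangle=J+kI$ in $\Cl(R)$ (unique up to isomorphism); since $J$ is Gorenstein of height one, $\langle0\rangle=J$ is maximal Cohen--Macaulay, and every $\langle k\rangle$ is reflexive, hence of depth $\ge2$.

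The engine is a twisted family of sequences. Applying $\Hom_R(-,\langle k\rangle)$ to the fundamental sequence produces
$$0\to\langle k+1\rangle\to\langle k\rangle^{\oplus2}\to\langle k-1\rangle\to\Ext_R^1(I^\ast,\langle k\rangle)\to0,$$
with $\langle k+1\rangle=\Hom_R(I^\ast,\langle k\rangle)$ and $\langle k-1\rangle=\Hom_R(I,\langle k\rangle)$; the error term has finite length because $I^\ast$ is locally free on the punctured spectrum. The decisive observation is that, although no $\langle k\rangle$ is itself locally free on the punctured spectrum (only $I$, not $J$, is), each comparison module $\Hom_R(\langle a\rangle,\langle b\rangle)$ has class $(b-a)I\in\Z I$ and so \emph{is} locally free on the punctured spectrum; this reopens the door to Lemma \ref{4}. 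In particular, when $\langle k\rangle$ and $\langle k+1\rangle$ are maximal Cohen--Macaulay, Lemma \ref{4} forces $\Ext_R^1(I^\ast,\langle k\rangle)=0$, and the displayed sequence collapses to a short exact sequence whose middle term $\langle k\rangle^{\oplus2}$ is maximal Cohen--Macaulay.

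For part (1) I would feed the given endpoint $\langle n\rangle$ into these sequences and their exact-square splicings (exactly as \eqref{5} is assembled in Theorem \ref{3}) and use the depth lemma to propagate maximal Cohen--Macaulayness inward toward the known endpoint $\langle0\rangle=J$, annihilating the finite-length error terms one at a time by Lemma \ref{4} as above; the case $n\le0$ is symmetric. For part (2), if $J+2I$ and $J-2I$ are both maximal Cohen--Macaulay then part (1) makes $\langle k\rangle$ maximal Cohen--Macaulay for $-2\le k\le2$, so the twisted sequences centred at $k=-1,0,1$ are clean short exact sequences of maximal Cohen--Macaulay modules. Following the final step of the proof of Theorem \ref{3}, I would extract from them a split sequence yielding $\langle0\rangle\oplus\langle2\rangle\cong\langle1\rangle^{\oplus2}$; completing and applying the Krull--Schmidt theorem, together with the indecomposability of rank-one reflexive modules, would force $\widehat{\langle1\rangle}\cong\widehat{\langle0\rangle}$, i.e. $I=0$ in $\Cl(\widehat R)$, whence $I=0$ in $\Cl(R)$ by \cite[Corollary 1.15]{LW}.

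The main obstacle is the breakdown of the symmetry underlying Theorem \ref{3}. There the line $\Z I$ is stable under $(-)^\ast$ and contains the free point $[R]$, so every relevant $\Ext$ lands in a free module and both the two-sided propagation and the splitting are automatic. Here $(-)^\ast$ carries $J+\Z I$ to $-J+\Z I$, and no twisted module is locally free on the punctured spectrum, so the only $\Ext$ and $\Tor$ groups one can control are those with a class-$\Z I$ module in one slot, i.e. those routed through the rigid ideal $I$. The delicate heart of the proof is therefore to show that this \emph{single} available orientation of sequences nonetheless suffices both to close the downward induction of part (1) and to produce the splitting of part (2); I expect the latter---obtaining $\langle0\rangle\oplus\langle2\rangle\cong\langle1\rangle^{\oplus2}$ without a free target---to be the hardest point, and to require playing the clean sequences centred at $-1,0,1$ against one another rather than splitting any one of them directly.
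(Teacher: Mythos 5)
Your preliminary reductions are fine (rigidity of $I$ via Lemma \ref{4}, the fundamental sequence from Lemma \ref{60}, the bookkeeping $\langle k\rangle=(J+kI)$, and the observation that $\Hom_R(\langle a\rangle,\langle b\rangle)$ has class $(b-a)I$ and is therefore locally free on the punctured spectrum), but the engine of your proof does not run, in either part. For (1), your sequences $0\to\langle k+1\rangle\to\langle k\rangle^{\oplus2}\to\langle k-1\rangle\to\Ext_R^1(I^\ast,\langle k\rangle)\to0$ have the wrong shape: because the contravariant functor $\Hom_R(-,\langle k\rangle)$ sends $I^\ast$ to the \emph{top} class $\langle k+1\rangle$, the known maximal Cohen--Macaulay module in a downward descent always sits at the left end, while the module you want to control sits in the middle or at the right end. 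Even in the best case, where the error term vanishes and $\langle k+1\rangle,\langle k\rangle$ are both maximal Cohen--Macaulay, the depth lemma applied to $0\to\langle k+1\rangle\to\langle k\rangle^{\oplus2}\to\langle k-1\rangle\to0$ gives only $\depth\langle k-1\rangle\ge\min\{\depth\langle k\rangle,\depth\langle k+1\rangle-1\}=d-1$, so maximal Cohen--Macaulayness is \emph{not} propagated; and the exactness needed to splice longer sequences requires depth bounds on the intermediate classes $\langle j\rangle$, which is exactly what is being proved --- the route is circular. The paper escapes this by manufacturing \emph{unconditionally} exact sequences with the target in the middle: starting from \eqref{5} it applies $\Hom_R((I),-)$ (error $\Ext_R^1((I),(I))=0$ by rigidity), which shifts the sequence so its left end is $(0)=R$, and only then applies $\Hom_R((\pm J),-)$, whose error term $\Ext_R^1((\pm J),R)$ vanishes simply because $J,J^\ast$ are maximal Cohen--Macaulay and $R$ is Gorenstein. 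This yields $0\to(\mp J)\to(\mp J-I)\oplus(\mp J-nI)\to(\mp J-(n+1)I)\to0$, in which the target $(\mp J-nI)$ is a direct summand of the middle term flanked by maximal Cohen--Macaulay ends, so the depth lemma gives full depth $d$ and the descent closes. Note this exploits precisely the vanishing you declared unavailable in the twisted setting (Ext of a maximal Cohen--Macaulay module into a free module); it becomes available once the $J$-twist is placed in the covariant slot against a sequence with free left end, rather than in the second slot as in your sequences.

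For (2), your plan hinges on the isomorphism $\langle0\rangle\oplus\langle2\rangle\cong\langle1\rangle^{\oplus2}$, which you rightly flag as the hardest point but for which you offer no mechanism: splitting $0\to\langle2\rangle\to\langle1\rangle^{\oplus2}\to\langle0\rangle\to0$ would require $\Ext_R^1(\langle0\rangle,\langle2\rangle)=0$, and Lemma \ref{4} cannot supply this because $\langle0\rangle=J$ is not locally free on the punctured spectrum. The paper never proves a twisted splitting; instead it reduces to the line $\Z I$. It applies $\Hom_R((-2I),-)$ to $0\to(J)\to(0)^{\oplus2}\to(-J)\to0$, kills the error $\Ext_R^1((-2I),(J))$ by Lemma \ref{4} (legitimately: $(-2I)$ has class in $\Z I$, hence is locally free on the punctured spectrum, and $\Hom_R((-2I),(J))=(2I+J)$ is maximal Cohen--Macaulay by hypothesis), and obtains \eqref{23}, whose two ends $(2I+J)$ and $(2I-J)=(J-2I)^\ast$ are maximal Cohen--Macaulay. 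The depth lemma then shows that $(2I)$, hence also $(-2I)$, is a maximal Cohen--Macaulay point \emph{on} $\Z I$, and the contradiction comes from rerunning the last step of Theorem \ref{3} there, where the free module is available and the splitting argument genuinely works. So the missing idea in both parts is the same one: do not transplant Theorem \ref{3}'s machinery onto the translated line $J+\Z I$; route every Ext that must vanish either into $R$ or out of a class-$\Z I$ module, and in part (2) pull the hypothesis back to the untranslated line $\Z I$ itself.
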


\begin{proof}
We reuse some parts of the proof of Theorem \ref{3}.
By the argument at the beginning, we may assume $I$ is nonprincipal, and $I,J$ are maximal Cohen--Macaulay modules.
For each element $x\in\Cl(R)$, we denote by $(x)$ a reflexive $R$-module $M$ of rank one with $M=x$ in $\Cl(R)$, whence $(0)\cong R$.

(1) It follows from \eqref{5} that there is an exact sequence $0\to(I)\to(0)\oplus((-n+1)I)\to(-nI)\to0$ for each $n>0$.
Applying the functor $\Hom_R((I),-)$, we get an exact sequence $0\to(0)\to(-I)\oplus(-nI)\to(-(n+1)I)\to0$ for all integers $n>0$.
Applying $\Hom_R((\pm J),-)$ gives an exact sequence
$$
0\to(\mp J)\to(\mp J-I)\oplus(\mp J-nI)\to(\mp J-(n+1)I)\to\Ext_R^1((\pm J),(0))=0,
$$
where the equality follows from the fact that $J,J^\ast$ are maximal Cohen--Macaulay $R$-modules.
This short exact sequence shows that if the $R$-module $(\mp J-(n+1)I)$ is maximal Cohen--Macaulay, then so is $(\mp J-nI)$.
Hence, for each integer $n>0$, if the point $J\pm(n+1)I\in\Cl(R)$ is maximal Cohen--Macaulay, then so is $J\pm nI$.
The assertion follows from this.

(2) Suppose that both $J+2I$ and $J-2I$ are maximal Cohen--Macaulay points of $\Cl(R)$.
The argument in the first paragraph of the proof of Theorem \ref{3} provides an exact sequence $0\to(J)\to(0)^{\oplus2}\to(-J)\to0$.
Applying $\Hom_R((-2I),-)$, we get an exact sequence
\begin{equation}\label{23}
0\to(2I+J)\to(2I)^{\oplus2}\to(2I-J)\to\Ext_R^1((-2I),(J))=0,
\end{equation}
where the equality follows from Lemma \ref{4}, since $\Hom_R((-2I),(J))=(2I+J)$ is maximal Cohen--Macaulay and $(-2I)=((I^{\otimes2})^{\ast\ast\ast})$ is locally free on the punctured spectrum of $R$.
By \eqref{23} the point $2I\in\Cl(R)$ is maximal Cohen--Macaulay, and so is $-2I$.
An analogous argument as in the last part of the proof of Theorem \ref{3} yields a contradiction.
Thus the proof is completed.
\end{proof}

\begin{ac}
The author is grateful to the referees for valuable comments.
Among other things, the author is deeply indebted to one of the referees for not only reading the paper carefully, but also giving the author a lot of useful suggestions, which have considerably polished up the paper.
Indeed, those suggestions have highly improved some of our results including Theorem \ref{d}, and have substantially simplified the proofs.

Part of this work was done during the author's visit to the University of Kansas from March 2018 to September 2019, and the author thanks them for their hospitality.
Also, the author thanks Hailong Dao, Futoshi Hayasaka, Yuji Kamoi, Kazuhiko Kurano, Hiroki Matsui, Tsutomu Nakamura, Koji Nishida, Shunsuke Takagi, Kei-ichi Watanabe and Ken-ichi Yoshida for their valuable and helpful comments.
\end{ac}


\end{document}